\theoremstyle{definition}
\newtheorem{definition}{Definition}
\theoremstyle{plain}
\newtheorem{lemma}[definition]{Lemma}
\newtheorem{theorem}[definition]{Theorem}
\newcommand*{\claimproofname}{Proof}
\newenvironment{case}[1]{%
	\caseinner
}{\endcaseinner\vspace{-5pt}}
\renewcommand\thesubfigure{(\alph{subfigure})}
\def\matrix0{{\mbox {\boldmath $O$}}}
\def\x{{\mbox{\boldmath $x$}}}
\def\vec0{\mbox{\bf 0}}
\newcommand\tran{\mkern-2mu\raise1.25ex\hbox{$\scriptscriptstyle\top$}\mkern-3.5mu}
\def\G{\Gamma}
\def\set#1{\lbrace#1\rbrace}
\DeclareMathOperator{\Aut}{Aut}
\newenvironment{dense_enumerate}[1]{%
	\begin{enumerate}[itemsep=0.2ex,parsep=0.2ex,partopsep=0.2ex,topsep=0.2ex,#1]%
	}
	{\end{enumerate}}
\newcounter{constraint}
\renewcommand{\theconstraint}{\alph{constraint}}
\newcommand{\constraintlabel}[1]{\refstepcounter{constraint}\label{#1}(\theconstraint)}
\let\c@figure\c@table
\let\ftype@figure\ftype@table
\let\ext@figure\ext@table
\definecolor{darkorange}{RGB}{255,132,8}
\title{On the existence of small strictly Neumaier graphs}
\author{Aida Abiad\thanks{\texttt{a.abiad.monge@tue.nl}, Department of Mathematics and Computer Science, Eindhoven University of Technology, The Netherlands\newline Department of Mathematics: Analysis, Logic and Discrete Mathematics, Ghent University, Belgium\newline Department of Mathematics and Data Science of Vrije Universiteit Brussel, Belgium} \quad Maarten De Boeck\thanks{\texttt{mdeboeck@memphis.edu},  Department of Mathematical Sciences, University of Memphis, U.S.A.\newline Department of Mathematics: Algebra and Geometry, Ghent University, Flanders, Belgium} \quad  Sjanne Zeijlemaker\thanks{\texttt{s.zeijlemaker@tue.nl},  Department of Mathematics and Computer Science, Eindhoven University of Technology, The Netherlands}}
\date{}
\begin{document}

	\maketitle

	\begin{abstract}
A Neumaier graph is a non-complete edge-regular graph containing a regular clique. In this work, we prove several results on the existence of small strictly Neumaier graphs. In particular, we present a theoretical proof of the uniqueness of the smallest strictly Neumaier graph with parameters $(16,9,4;\allowbreak 2,4)$, we establish the existence of a strictly Neumaier graph with parameters $(25,12,5;\allowbreak 2,5)$, and we disprove the existence of strictly Neumaier graphs with parameters $(25,16,9;3,5)$, $(28,18,11;4,7)$, $(33,24,17;6,9)$, $(35,22,12;3,5)$, $(40,30,\allowbreak 22;\allowbreak7,10)$ and $(55,34,18;3,5)$. Our proofs use combinatorial techniques and a novel application of integer programming methods.		
\end{abstract}

	\section{Introduction}

	A regular graph is called \emph{edge-regular} if any two adjacent vertices have the same number ($\lambda$) of common neighbors, and \emph{co-edge-regular} if every pair of non-adjacent vertices have the same number ($\mu$) of common neighbors. A \emph{strongly regular graph} is an edge and co-edge regular graph. A \emph{regular clique} of order $s$ in a regular graph is a clique having the property that every vertex outside of it is adjacent to the same positive number of vertices of the clique, denoted by $e$. A \emph{Neumaier graph} is a non-complete edge-regular graph containing a regular clique. A Neumaier graph that is not a strongly regular graph is called a \emph{strictly Neumaier graph}, and its parameter set is denoted as $(v,k,\lambda;e,s)$, where~$v$ is the number of vertices and~$k$ the degree.
	\par In his 1981 paper \cite{N1981}, Neumaier studied regular cliques in edge-regular graphs, and he showed that all vertex-transitive, edge-transitive graphs with a regular clique are strongly regular. He subsequently raised the question whether there are edge-regular graphs with a regular clique, that are not strongly regular, i.e.\ whether there are strictly Neumaier graphs. Greaves and Koolen \cite{greaves2018edge} gave an answer to this question by constructing an infinite family of strictly Neumaier graphs. The same authors provided a second construction in \cite{greaves2019another}. All strictly Neumaier graphs described in \cite{greaves2018edge,greaves2019another} have $e=1$. Evans, Goryainov and Panasenko \cite{evans2018smallest} presented a family of strictly Neumaier graphs which is the only known family of strictly Neumaier graphs with $e>1$. Abiad, De Bruyn, D'haeseleer and Koolen \cite{dcc2020} investigated Neumaier graphs with few eigenvalues, and showed that Neumaier graphs with four distinct eigenvalues do not exist, and recently the three present authors, together with Castryck and Koolen, presented several new non-existence results and a new construction of strictly Neumaier graphs with~$e=1$ \cite{ABIAD2023105684}. 
\par In this article we present a variety of results on the existence of small strictly Neumaier graphs which resolve several open cases of parameters sets. In particular, we provide a theoretical proof for the uniqueness of the smallest strictly Neumaier graph, which has parameters $(16,9,4;2,4)$, thus confirming the computational result obtained by Evans, Goryainov and Panasenko \cite{evans2018smallest}. We also show that a strictly Neumaier graph with parameters $(25,12,5;2,5)$ exists, presenting a construction starting from a Latin square. At the time of writing, this is the first strictly Neumaier graph with odd order and~$e>1$. While optimization techniques have been successfully used on other algebraic graph classes such as cages \cite{RB2015} or mixed Moore graphs \cite{LMF2016}, their potential has not yet being exploited in the context of Neumaier graphs. In this regard, we propose an Integer Linear Programming (ILP) model and we use it to disprove the existence of strictly Neumaier graphs with several parameter sets; in particular, we show that $(25,16,9;3,5)$, $(28,18,11;4,7)$, $(33,24,17;6,9)$ $(35,22,12;3,5)$, $(40,30,22;7,10)$ and $(55,34,18;3,5)$ do not admit a strictly Neumaier graph.

	\section{Preliminaries}
	Throughout this paper we will consider simple graphs (i.e., undirected, loopless, no multiple edges). For a graph $\G$ we denote the set of vertices at distance $i$ from a given vertex $u$ by $\G_{i}(u)$. The set of neighbors of $u$, $\G_{1}(u)$, is denoted by $\G(u)$. The degree~$|\G(u)|$ of a vertex~$u$ is denoted by~$d_u$. Adjacency between vertices is denoted by $\sim$ and the set of edges between two sets of vertices~$S$ and~$T$ is denoted by~$E(S,T)$. 
	\par A graph is \emph{($k$-)regular} if each vertex is adjacent to $k$ vertices, for some integer $k$. A regular graph is \emph{($\lambda$-)edge-regular} if it is non-empty, and any pair of adjacent vertices has exactly $\lambda$ common neighbors for some integer $\lambda$; it is \emph{($\mu$-)co-edge-regular} if it is not complete, and any pair of non-adjacent vertices has exactly $\mu$ common neighbors for some integer $\mu$. A graph that is both edge-regular and co-edge-regular is called \emph{strongly regular}. An edge-regular graph with parameters $(v,k,\lambda)$ has $v$ vertices, is $k$-regular and $\lambda$-edge-regular; a co-edge-regular graph with parameters $(v,k,\mu)$ has $v$ vertices, is $k$-regular and $\mu$-co-edge-regular. A strongly regular graph has parameters $(v,k,\lambda,\mu)$ if it is edge-regular with parameters $(v,k,\lambda)$ and co-edge-regular with parameters $(v,k,\mu)$. An edge-regular graph satisfies the following properties.

   \begin{theorem}[{\cite[Section 1.1]{bcn}},{\cite[Corollary 3.2]{ABIAD2023105684}}]\label{th:ERG}
    Let $\Gamma$ be an edge-regular graph with parameters $(v,k,\lambda)$. Then 
    \begin{enumerate}[label=(\roman*),align=left,leftmargin=*]
        \item $v-2k+\lambda\ge 0$,
        \item $\lambda k \equiv 0 \pmod 2$,
        \item $vk\lambda \equiv 0 \pmod 6$,
        \item $(v-k-1)(v-k-2)-k(v-2k+\lambda) \ge 0$.
    \end{enumerate}
   \end{theorem}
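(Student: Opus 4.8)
The plan is to fix a vertex $u$ of $\Gamma$ and obtain each of the four items from a local double-counting argument, closing with an integrality or a non-negativity observation.

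For (i), I would use that an edge $\{u,w\}$ satisfies $u\in\Gamma(w)$, $w\in\Gamma(u)$ and $|\Gamma(u)\cap\Gamma(w)|=\lambda$, so $|\Gamma(u)\cup\Gamma(w)|=2k-\lambda$; since this union is contained in the $v$-element vertex set, $2k-\lambda\le v$, which is (i).

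For (ii) and (iii), the key point is that the subgraph induced on $\Gamma(u)$ is $\lambda$-regular on $k$ vertices, because the neighbours of any $w\in\Gamma(u)$ lying inside $\Gamma(u)$ are exactly the common neighbours of $u$ and $w$. Hence $\Gamma(u)$ spans $\tfrac12 k\lambda$ edges, so $k\lambda$ is even, giving (ii); and since this number also equals the number of triangles of $\Gamma$ through $u$, summing over all $v$ vertices counts each triangle three times, so $\Gamma$ has $\tfrac16 vk\lambda$ triangles, an integer, giving (iii).

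The one item needing a marginally less immediate idea is (iv), which I would get by counting edges among the non-neighbours of $u$. Set $R:=V(\Gamma)\setminus(\{u\}\cup\Gamma(u))$, so $|R|=v-k-1$. Each $w\in\Gamma(u)$ has one neighbour equal to $u$ and $\lambda$ neighbours in $\Gamma(u)$, hence exactly $k-1-\lambda$ neighbours in $R$; thus $\Gamma$ has $k(k-1-\lambda)$ edges between $\Gamma(u)$ and $R$. Since no vertex of $R$ is adjacent to $u$, each $x\in R$ has $k-|\Gamma(x)\cap\Gamma(u)|$ of its neighbours in $R$, so the number of edges inside $R$ is $\tfrac12\big(k(v-k-1)-k(k-1-\lambda)\big)=\tfrac12 k(v-2k+\lambda)$. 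Therefore the number of non-edges inside $R$ equals $\binom{v-k-1}{2}-\tfrac12 k(v-2k+\lambda)=\tfrac12\big((v-k-1)(v-k-2)-k(v-2k+\lambda)\big)$, which is non-negative, proving (iv). Equivalently, $R$ is the neighbourhood of $u$ in the complement $\overline{\Gamma}$, which is co-edge-regular with parameters $(v,\,v-k-1,\,v-2k+\lambda)$, and one is simply bounding below by $0$ the number of edges inside that neighbourhood. None of the steps is a genuine obstacle; the only care needed is the verification in (iv) that the neighbours of a fixed $w\in\Gamma(u)$ other than $u$ and the $\lambda$ common neighbours of $u,w$ indeed all lie in $R$, which holds since $\Gamma(u)\cap\Gamma(w)$ consists precisely of those common neighbours.
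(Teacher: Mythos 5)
Your proposal is correct in all four parts, and the double-counting arguments are sound: the union bound for (i), the $\lambda$-regularity of the induced subgraph on $\Gamma(u)$ for (ii) and (iii) (with the correct factor of $3$ when summing triangle counts over vertices), and the edge count inside $R=V(\Gamma)\setminus(\{u\}\cup\Gamma(u))$ for (iv), including the needed verification that each $w\in\Gamma(u)$ has exactly $k-1-\lambda$ neighbours in $R$. The paper itself gives no proof of this theorem, citing \cite[Section 1.1]{bcn} for (i)--(iii) and \cite[Corollary 3.2]{ABIAD2023105684} for (iv); your arguments are essentially the standard ones from those sources, with (iv) amounting, as you note, to the non-negativity of the number of non-edges in a neighbourhood of the complement graph.
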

	
	Let~$\G$ be a graph with vertex set $V(\G)$ and let~$S\subset V(\G)$. If every vertex in $V(\G) \setminus S$ has precisely $e>0$ neighbors in $S$, we say that $S$ is \emph{$e$-regular}. A \emph{clique} of $\G$ is a subset of $V(\G)$ wherein all vertices are pairwise adjacent; a \emph{coclique} of $\G$ is a subset of $V(\G)$ wherein all vertices are pairwise non-adjacent. 
	\par A graph is a \textit{Neumaier graph} with parameters $(v,k,\lambda;e,s)$ if it is edge-regular with parameters $(v,k,\lambda)$ and has an $e$-regular clique of size $s$. A Neumaier graph which is not strongly regular is called \textit{strictly Neumaier}. Neumaier proved the following results regarding regular cliques in Neumaier graphs.
	
	\begin{theorem}[\cite{N1981}, Theorem 1.1]\label{th:maxcliques}
		Let $\G$ be a Neumaier graph with parameters $(v,k,\lambda;e,s)$. Then 
		\begin{enumerate}[label=(\roman*),align=left,leftmargin=*]
			\item the largest clique of $\G$ has size $s$,
			\item \label{it:allCreg} all regular cliques are $e$-regular,
			\item the regular cliques are exactly the cliques of size $s$.
		\end{enumerate}
	\end{theorem}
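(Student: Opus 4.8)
The plan is to prove the three parts together, exploiting a counting argument on the edges between a regular clique and a fixed clique, and then leveraging the resulting (in)equality to pin down the structure. Let $C$ be an $e$-regular clique of size $s$ in the Neumaier graph $\Gamma$, and let $K$ be an arbitrary clique of $\Gamma$; set $t = |K|$. The central idea is to count, in two ways, the pairs $(x, c)$ with $x \in K \setminus C$, $c \in C \setminus K$, and $x \sim c$; equivalently, to bound $|E(K \setminus C, C)|$. A vertex of $K \setminus C$ sees exactly $e$ vertices of $C$ by $e$-regularity, and every vertex of $C \cap K$ also sees the $|C \cap K| - 1$ other vertices of $C \cap K$ inside $C$.

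First I would establish part (i), that no clique has size exceeding $s$. Suppose $K$ is a clique with $|K| = t$. If $c \in C \setminus K$, then $c$ is adjacent to at most $e$ vertices of $K$: indeed, the vertices of $K$ that are adjacent to $c$ together with $c$ itself would form a clique, but more to the point, consider a vertex $x \in K \setminus C$ (if $K \subseteq C$ we are done since $t \le s$); then $x$ has exactly $e$ neighbors in $C$, so $|E(\{x\}, C \setminus K)| \le e - |C \cap K|$ only if $x$ is adjacent to all of $C \cap K$, which it is since $C\cap K \cup \{x\}\subseteq K$ is a clique. Hence each $x \in K \setminus C$ contributes exactly $e - |C \cap K|$ edges to $C \setminus K$, giving $|E(K \setminus C, C \setminus K)| = (t - |C \cap K|)(e - |C \cap K|)$. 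On the other hand, each $c \in C \setminus K$ is adjacent to at most $t - |C\cap K|$ vertices of $K \setminus C$ trivially; the useful bound comes from edge-regularity applied to an edge inside $C$: for $c \in C\setminus K$ and $c' \in C \cap K$ with $c \sim c'$, the common neighbors of $c, c'$ number $\lambda$, and they include the $s - 2$ other vertices of $C$; so $c$ has at most $\lambda - (s - 2)$ neighbors outside $C$ that are also neighbors of $c'$, in particular at most $\lambda - s + 2$ neighbors in $K \setminus C$ when $C \cap K \neq \emptyset$. Combining the two counts and doing the arithmetic — here one uses $v - 2k + \lambda \ge 0$ and the edge-regular inequalities from Theorem~\ref{th:ERG} together with the relation $k = \lambda + s - e\frac{\cdots}{\cdots}$ coming from counting edges from $C$ to its complement — yields $t \le s$, with the key inequality being $e(s-1) \le (\lambda + 1 - e) \cdot (\text{something})$ manipulated into $t \le s$.

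Next, for part (ii) and (iii), I would argue that if $K$ itself is a regular clique, say $e'$-regular, then the double count must hold with equality throughout, which forces $|C \cap K|$ and the adjacency pattern to be rigid. Concretely, taking $K$ to be an $e'$-regular clique and running the same edge count between $C$ and $K$ symmetrically (counting $|E(C \setminus K, K)|$ via the $e'$-regularity of $K$ and $|E(K \setminus C, C)|$ via the $e$-regularity of $C$) gives two expressions for the same edge set, from which $e = e'$ after substituting the size relation $s e = s' e'$ (both cliques have maximum size $s$ by part (i), so $s = s'$, hence $e = e'$). This proves (ii). For (iii), I would show any clique of size exactly $s$ is automatically regular: given a size-$s$ clique $K$ and an external vertex $y \notin K$, count common neighbors along edges of $K$ and use edge-regularity plus the extremal case of the part-(i) inequality (which must be tight since $|K| = s$ is maximal) to conclude $y$ has a fixed number $e$ of neighbors in $K$ independent of $y$.

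The main obstacle I anticipate is making the double-counting inequality in part (i) genuinely tight and bookkeeping the case $C \cap K = \emptyset$ versus $C \cap K \neq \emptyset$ separately, since the cleanest bound (each external vertex sees exactly $e$ clique vertices) interacts differently with $K$ depending on the overlap; the inequality $\lambda - s + 2 \ge 0$ and the global parameter relation $k - s + 1 = (\lambda - s + 2)\cdot\frac{v - s}{\,?\,}$ type identity must be invoked carefully. Once the extremal structure of the edge count is understood, parts (ii) and (iii) follow by re-running the count with the roles of the two cliques interchanged and reading off equality conditions, so the real work is entirely in the counting lemma underlying (i).
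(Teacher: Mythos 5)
The paper does not actually prove this statement --- it is quoted verbatim from Neumaier's 1981 paper with a citation and no argument --- so your proposal has to stand on its own, and it does not. The decisive step of part (i) is never carried out: at exactly the point where the two counts are supposed to combine into $t\le s$ you write literal placeholders ($k=\lambda+s-e\frac{\cdots}{\cdots}$ and ``$(\text{something})$''), and the one inequality you do establish is demonstrably too weak. Writing $m=|C\cap K|$, your two counts give $(t-m)(e-m)=|E(K\setminus C,\,C\setminus K)|\le (s-m)(\lambda-s+2)$ for $m\ge 1$. Test this on the paper's own smallest parameter set $(16,9,4;2,4)$, where $\lambda-s+2=2$: for $m=1$ it gives only $t\le 7$, for $m=e=2$ the left-hand side vanishes and the inequality is vacuous, and for $m=0$ (which you acknowledge but do not handle) the bound on $|\Gamma(c)\cap(K\setminus C)|$ is unavailable altogether. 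So the conclusion $t\le s$ simply does not follow from the counting you set up. Parts (ii) and (iii) are explicitly reduced to ``equality in the part-(i) inequality,'' so they inherit the gap; moreover the relation $se=s'e'$ you invoke in (ii) is not a valid parameter identity (the correct one is $s(k-s+1)=(v-s)e$, which does give $e=e'$ once $s=s'$ is known, and the step ``two regular cliques are both maximum by (i), hence have the same size'' is a sound reduction --- but only once (i) is actually proved for an arbitrary regular clique).

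The missing ingredient is quadratic information taken over \emph{all} vertices outside $K$, not just the first-moment edge count between $K$ and the one given regular clique $C$. The standard toolkit here is the pair of identities, valid for any clique $K$ of size $t$ in an edge-regular graph,
\[
\sum_{y\notin K} e_K(y)=t(k-t+1),\qquad \sum_{y\notin K} e_K(y)\bigl(e_K(y)-1\bigr)=t(t-1)(\lambda-t+2),
\]
where $e_K(y)=|\Gamma(y)\cap K|$; for the regular clique $C$ the function $e_C$ is constant, so $C$ attains the extremal (zero-variance) case of these counts, and it is the comparison of moments --- together with the nexus identity $(k-s+1)(e-1)=(s-1)(\lambda-s+2)$ obtained by counting around a single vertex of $C$ --- that yields both the bound $t\le s$ and the fact that equality forces $e_K$ to be constant, i.e.\ parts (i) and (iii) simultaneously. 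Your count uses only the restriction of $e_K$ to $C$ and only its first moment, which, as the computation above shows, cannot distinguish $t=s$ from $t=s+1$. As written, the proposal is a plan with the hard step left blank rather than a proof.
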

	
	Not every choice of parameters~$(v,k,\lambda;e,s)$ admits a (strictly) Neumaier graph. Naturally, the parameters should satisfy $e\le s-1$, $k < v-1$ and $s-2\le \lambda < k$. In addition, they must satisfy the following restrictions.
	
	\begin{theorem}[{\cite[Theorem 1.1]{N1981} and \cite[Theorem 1]{evans2018smallest}}]\label{th:existenceconditions}
		The parameters $(v,k,\lambda;e,s)$ of a Neumaier graph satisfy the following conditions:
		\begin{enumerate}[label=(\roman*),align=left,leftmargin=*]
			\item $k-s+e-\lambda -1 \ge 0$,
			\item $s(k-s+1)=(v-s)e$,
			\item $s(s-1)(\lambda-s+2)=(v-s)e(e-1)$.
		\end{enumerate}
	\end{theorem}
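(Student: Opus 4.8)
The plan is to fix an $e$-regular clique $C$ of $\G$ with $|C|=s$, which exists by hypothesis (and is a maximum clique by Theorem~\ref{th:maxcliques}), and to derive the three relations by double counting the edges and the paths of length two between $C$ and $V(\G)\setminus C$.

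For~(ii) I would count the edges in $E(C,V(\G)\setminus C)$ in two ways. Each of the $s$ vertices of $C$ is adjacent to the other $s-1$ vertices of $C$, hence to exactly $k-(s-1)=k-s+1$ vertices outside $C$; this gives $s(k-s+1)$ edges. On the other hand each of the $v-s$ vertices of $V(\G)\setminus C$ has exactly $e$ neighbours in $C$, giving $(v-s)e$ edges. Equating the two counts yields~(ii).

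For~(iii) I would count the triples $(\{x,y\},z)$ with $\{x,y\}$ an edge of $C$ and $z\in V(\G)\setminus C$ adjacent to both $x$ and $y$. Summing over the $\binom{s}{2}$ edges of $C$: an edge $\{x,y\}$ has $\lambda$ common neighbours in $\G$, of which exactly $s-2$ lie in $C$ (the remaining vertices of the clique), so $\lambda-s+2$ lie outside $C$; this gives $\binom{s}{2}(\lambda-s+2)$ triples. Summing over $z\in V(\G)\setminus C$: each such $z$ has exactly $e$ neighbours in $C$ and hence contributes $\binom{e}{2}$ pairs $\{x,y\}$; this gives $(v-s)\binom{e}{2}$ triples. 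Equating and clearing the common factor $\tfrac12$ yields~(iii).

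For~(i) I would argue locally. Since $e\ge1$ and $v>s$, there exist $z\in V(\G)\setminus C$ and $x\in C$ with $x\sim z$. The vertices $x$ and $z$ have $\lambda$ common neighbours, of which exactly $e-1$ lie in $C$ (namely the neighbours of $z$ in $C$ other than $x$, all of which are adjacent to $x$ because $C$ is a clique). The remaining $\lambda-(e-1)$ common neighbours lie outside $C$ and are among the neighbours of $x$ outside $C$ different from $z$; since $x$ has only $k-s+1$ neighbours outside $C$, this gives $\lambda-(e-1)\le (k-s+1)-1=k-s$, which rearranges to~(i). None of the three arguments is really difficult; the only point that needs care is the bookkeeping of how the $\lambda$ common neighbours of an edge split between $C$ and its complement, and checking that the configuration counted for~(i) is non-empty so that the inequality is meaningful.
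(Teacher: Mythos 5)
Your three double-counting arguments are all correct: (ii) counts $E(C,V(\Gamma)\setminus C)$ from both sides, (iii) counts pairs consisting of an edge of $C$ and a common neighbour outside $C$, and (i) correctly splits the $\lambda$ common neighbours of an edge $xz$ with $x\in C$, $z\notin C$ into the $e-1$ lying in $C$ and at most $k-s$ lying outside. The paper itself gives no proof of this theorem --- it is quoted from \cite{N1981} and \cite{evans2018smallest} --- and your argument is essentially the standard one found there, so there is nothing further to compare.
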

	
	For strictly Neumaier graphs, some additional conditions are known. We refer to \cite[Proposition 5.1]{greaves2018edge}, \cite[Theorem 1.3]{N1981}, \cite[Theorem 4.1]{soicher2015cliques},\cite[Lemma 4.7]{evansphd},   \cite[Theorem 4.10]{evansphd} and \cite[Corollary 3.2, Theorem 3.4]{ABIAD2023105684} for more details.
	
	\begin{theorem}\label{th:existenceconditionsstrict}
		The parameters $(v,k,\lambda;e,s)$ of a strictly Neumaier graph satisfy the following conditions:
		\begin{enumerate}[label=(\roman*),align=left,leftmargin=*]
			\item $s \ge 4$ and, as a result, $\lambda \ge 2$,
			\item $e \le k-2$,
			\item $v \notin \set{2k-\lambda, 2k-\lambda+1}$,
			\item $k-s+e-\lambda -1 \ge 1$.
			\item $(v-k-1)(v-k-2)-k(v-2k+\lambda)>0$,
			\item $(v,k,\lambda;e,s)\neq (6\ell+3,4\ell+2,3\ell;\ell+1,2\ell+1)$ for some integer $\ell\geq 3$.
		\end{enumerate}
	\end{theorem}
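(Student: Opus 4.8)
The plan is to verify the six conditions one at a time, each time combining the structural facts about Neumaier graphs (Theorems~\ref{th:maxcliques} and~\ref{th:existenceconditions}) and about edge-regular graphs (Theorem~\ref{th:ERG}) with the arguments of \cite{N1981}, \cite{greaves2018edge}, \cite{soicher2015cliques}, \cite{evansphd} and \cite{ABIAD2023105684}. The unifying idea is that most of these conditions are feasibility inequalities that become \emph{strict} for a strictly Neumaier graph, precisely because equality would force the graph to be strongly regular; the one exception is~(vi), which is a direct substitution into the parameter equations of Theorem~\ref{th:existenceconditions} followed by a counting argument.

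For~(i) I would first record the elementary fact that inside a clique of size~$s$ every edge sees the remaining $s-2$ clique vertices as common neighbours, so $\lambda \ge s-2$ and it suffices to prove $s \ge 4$ (then $\lambda \ge 2$ is automatic). To prove $s \ge 4$ I would rule out $s \in \{2,3\}$ directly: an $e$-regular clique of size~$2$ forces $e=1$ and one of size~$3$ forces $e \in \{1,2\}$, and plugging these into Theorem~\ref{th:existenceconditions}(ii)--(iii) pins down $e$, $\lambda$ and~$v$ in terms of~$k$ (for instance $s=2$ gives $\lambda=0$ and $v=2k$), after which a short classification shows the graph is strongly regular; this is the case analysis of \cite{N1981}. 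Condition~(ii) I would simply import from \cite[Proposition~5.1]{greaves2018edge}: one bounds~$e$ from the local structure at a clique vertex and observes that $e \in \{k-1,k\}$ makes the maximum clique extend or makes $\Gamma$ a disjoint union of complete graphs, neither of which gives a strictly Neumaier graph. Condition~(vi) is \cite[Theorem~3.4]{ABIAD2023105684}: for the family $(6\ell+3,4\ell+2,3\ell;\ell+1,2\ell+1)$ all feasibility conditions in Theorems~\ref{th:existenceconditions} and~\ref{th:ERG} (and parts~(iv) and~(v) of the present statement) happen to hold, yet a counting/eigenvalue argument yields a contradiction once $\ell \ge 3$; the cases $\ell \le 2$ need nothing new, being already excluded by~(i) and~(iii). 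I would quote that computation.

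The remaining three items I would prove as ``tightness forces strong regularity'' statements. For~(v): fix a vertex~$x$, write $\mu_w = |\Gamma(x)\cap\Gamma(w)|$ for a non-neighbour~$w$ of~$x$; counting paths of length~$2$ out of~$x$ gives $\sum_{w}\mu_w = k(k-\lambda-1)$, while $|\Gamma(x)\cup\Gamma(w)| \le v-2$ gives $\mu_w \ge 2k-v+2$ for each~$w$; substituting the bound into the sum rearranges exactly into Theorem~\ref{th:ERG}(iv), with equality iff every $\mu_w$ (for every~$x$) equals $2k-v+2$, that is, iff $\Gamma$ is strongly regular --- hence a strictly Neumaier graph satisfies it strictly (\cite[Corollary~3.2]{ABIAD2023105684}). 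For~(iii): the same computation at an \emph{edge}~$xz$ gives $|\Gamma(x)\cup\Gamma(z)| = 2k-\lambda \le v$, and equality ($v=2k-\lambda$) means every vertex is adjacent to at least one endpoint of every edge, which forces $\Gamma$ to be complete multipartite and hence strongly regular; the neighbouring value $v=2k-\lambda+1$ requires a slightly more delicate argument and is \cite[Theorem~1.3]{N1981}. For~(iv): Theorem~\ref{th:existenceconditions}(i) already gives $k-s+e-\lambda-1 \ge 0$, so I only need to exclude the equality $\lambda = k-s+e-1$; in that case, for a clique vertex~$x$ and a neighbour $u \notin C$ a short count shows that $u$ is adjacent to all but exactly one vertex of $\Gamma(x)\setminus C$, and propagating this near-saturated adjacency across the vertex classes cut out by the regular clique (as in \cite[Lemma~4.7]{evansphd}, \cite[Theorem~4.10]{evansphd} and \cite[Theorem~4.1]{soicher2015cliques}) forces $\Gamma$ to be strongly regular.

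The step I expect to be the main obstacle is this last rigidity argument for~(iv) (and, to a lesser degree, the $v=2k-\lambda+1$ case of~(iii)): turning a single tight local count into a \emph{global} strong-regularity conclusion requires carefully tracking the common-neighbour counts between all the vertex classes determined by the regular clique, and it is this bookkeeping, rather than any single inequality, that carries the weight. Everything else is either an immediate parameter substitution or can be taken over from the cited references.
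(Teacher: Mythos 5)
The paper does not actually prove Theorem~\ref{th:existenceconditionsstrict}: it only collects these six conditions from the literature, pointing to \cite[Proposition 5.1]{greaves2018edge}, \cite[Theorem 1.3]{N1981}, \cite[Theorem 4.1]{soicher2015cliques}, \cite[Lemma 4.7, Theorem 4.10]{evansphd} and \cite[Corollary 3.2, Theorem 3.4]{ABIAD2023105684}. Your proposal assigns each condition to the correct source, and the arguments you do spell out (the union-of-neighbourhoods counts behind (iii) and (v), whose equality cases force co-edge-regularity, and the clique-vertex count behind (iv)) are sound, so this is essentially the paper's approach with added detail rather than a different route.
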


    A parameter set is called \emph{admissible} if it satisfies all of the above conditions for strictly Neumaier graphs. Table~\ref{table:neu:parametersstrictlyNG} gives an overview of all known existence results for admissible parameter sets with~$v \le 64$. 
	
    \begin{table}[!htp]
        \small
				\begin{minipage}[t]{.5\linewidth}
					\centering
                    \vspace{0pt}
					\begin{tabular}{c | c | c | c | c | c }
						$v$ & $k$ & $\lambda$ & $e$ & $s$ & Number\\\hline\hline
						\textbf{16} & \textbf{9} & \textbf{4} & \textbf{2} & \textbf{4} & \textbf{1,\cite{evans2018smallest}, Section \ref{sec:neu:uniqueNG16}}\\
                        \hline
						22 & 12 & 5 & 2 & 4 &  0, \cite{evans2018smallest}\\ \hline
						24 & 8 & 2 & 1 & 4 & $\ge 6$, \cite{evans2018smallest,goryainov2014cayley,greaves2019another} \\  \hline
						\multirow[t]{2}{*}{\textbf{25}} & \textbf{12} & \textbf{5} & \textbf{2} & \textbf{5} & $\mathbf{\ge}\mathbf{1}$, \textbf{Section~\ref{sec:neu:new25}} \\
						& \textbf{16} & \textbf{9} & \textbf{3} & \textbf{5} & \textbf{0, Section~\ref{sec:neu:nonexistence}}\\\hline
						26 & 15 & 8 & 3 & 6 & \\ \hline
						\multirow[t]{4}{*}{28} & 9 & 2 & 1 & 4 & $\ge 4$, \cite{evans2018smallest,greaves2018edge}\\
						& \multirow[t]{2}{*}{15} & 6 & 2 & 4 & \\
						&  & 8 & 3 & 7 & \\ 
						& \textbf{18} & \textbf{11} & \textbf{4} & \textbf{7} & \textbf{0, Section~\ref{sec:neu:nonexistence}}\\ \hline
						\textbf{33} & \textbf{24} & \textbf{17} & \textbf{6} & \textbf{9} & \textbf{0, Section~\ref{sec:neu:nonexistence}}\\ \hline
						34 & 18 & 7 & 2 & 4 & \\ \hline
						\multirow[t]{4}{*}{35} & 10 & 3 & 1 & 5 & \\ 
						& 16 & 6 & 2 & 5 & \\ 
						& 18 & 9 & 3 & 7 & \\ 
						  & \textbf{22} & \textbf{12} & \textbf{3} & \textbf{5} & \textbf{0, Section~\ref{sec:neu:nonexistence}}\\ \hline
						\multirow[t]{5}{*}{36} & 11 & 2 & 1 & 4 & \\
						& 15 & 6 & 2 & 6 & \\
						& 20 & 10 & 3 & 6 & \\ 
						& 21 & 12 & 4 & 8 & \\ 
						& 25 & 16 & 4 & 6 & \\\hline
						\multirow[t]{5}{*}{40} & 12 & 2 & 1 & 4 & $\ge 1$, \cite{evans2018smallest}\\ 
						& \multirow[t]{2}{*}{21} & 8 & 2 & 4 & \\ 
						& & 12 & 4 & 10 & \\ 
						& 27 & 18 & 6 & 10 & \\ 
						& \textbf{30} & \textbf{22} & \textbf{7} & \textbf{10} & \textbf{0, Section~\ref{sec:neu:nonexistence}}\\ \hline
						\multirow[t]{3}{*}{42} & 11 & 4 & 1 & 6 & \\ 
						& 21 & 10 & 3 & 7 & \\ 
						& 26 & 15 & 4 & 7 & \\ \hline
						44 & 28 & 18 & 6 & 11 & \\\hline
						\multirow[t]{8}{*}{45} & 12 & 3 & 1 & 5 & \\
						& \multirow[t]{2}{*}{20} & 7 & 2 & 5 & \\
						&  & 10 & 3 & 9 & \\
						& 24 & 13 & 4 & 9 & \\ 
						& \multirow[t]{2}{*}{28} & 15 & 3 & 5 & \\
						&  & 17 & 5 & 9 & \\
						& 32 & 22 & 6 & 9 & \\\hline
						\multirow[t]{3}{*}{46} & 24 & 9 & 2 & 4 & \\
						& 25 & 12 & 3 & 6 & \\
						& 27 & 16 & 5 & 10 & \\\hline
						\multirow[t]{3}{*}{48} & 12 & 4 & 1 & 6 & \\
						& 14 & 2 & 1 & 4 & \\
					\end{tabular}
				\end{minipage}%
				\begin{minipage}[t]{.5\linewidth}
					\centering
                    \vspace{0pt}
					\begin{tabular}{c | c | c | c | c | c }
						$v$ & $k$ & $\lambda$ & $e$ & $s$ & Number\\\hline\hline
						\multirow[t]{3}{*}{49} & 18 & 7 & 2 & 7 & \\
						& 24 & 11 & 3 & 7 & \\
						& 30 & 17 & 4 & 7 & \\
						& 36 & 25 & 5 & 7 & \\\hline
						50 & 28 & 15 & 4 & 8 & \\\hline
						\multirow[t]{2}{*}{51} & 20 & 7 & 2 & 6 & \\\hline
						\multirow[t]{4}{*}{52} & 15 & 2 & 1 & 4 & $\ge 1$, \cite{greaves2018edge}\\
						&\multirow[t]{2}{*}{27} & 10 & 2 & 4 & \\
						& & 16 & 5 & 13 & \\
						& 36 & 25 & 8 & 13 & \\\hline
						54 & 13 & 4 & 1 & 6 & \\\hline
						\multirow[t]{6}{*}{55} & 14 & 3 & 1 & 5 & \\
						& 24 & 8 & 2 & 5 & \\
						& 30 & 17 & 5 & 11 & \\
						& \multirow[t]{2}{*}{\textbf{34}} & \textbf{18} & \textbf{3} & \textbf{5} & \textbf{0, Section \ref{sec:neu:nonexistence}}\\
						& & 21 & 6 & 11 & \\
						& 36 & 23 & 6 & 10 & \\\hline
						\multirow[t]{3}{*}{56} & 27 & 12 & 3 & 7 & \\
						& 30 & 14 & 3 & 6 & \\
						& 33 & 20 & 6 & 12 & \\\hline
						\multirow[t]{3}{*}{57} & 24 & 11 & 3 & 9 & \\
						& 40 & 27 & 6 & 9 & \\
						& 42 & 31 & 10 & 15 & \\\hline
						\multirow[t]{4}{*}{60} & 14 & 4 & 1 & 6 & $\ge 1$, \cite{greaves2019another}\\
						& 17 & 2 & 1 & 4 & \\
						& 35 & 22 & 7 & 15 & \\
						& 38 & 25 & 8 & 15 & \\\hline
						\multirow[t]{5}{*}{63} & 14 & 5 & 1 & 7 & \\
						& 30 & 13 & 3 & 7 & \\
						& 32 & 16 & 4 & 9 & \\
						& \multirow[t]{2}{*}{38} & 21 & 4 & 7 & \\
						&& 22 & 5 & 9 & \\\hline
						\multirow[t]{11}{*}{64} & 18 & 2 & 1 & 4 & \\
						& 21 & 8 & 2 & 8 & \\
						& 28 & 12 & 3 & 8 & $\ge 18$, \cite{goryainovcommunication}\\
						& \multirow[t]{2}{*}{33} & 12 & 2 & 4 & \\
						& & 20 & 6 & 16 & \\
						& 35 & 18 & 4 & 8 & $\ge 1380$, \cite{evans2018smallest,goryainovcommunication}\\
						& 36 & 20 & 5 & 10 & \\
						& 42 & 26 & 5 & 8 & $\ge 1$, \cite{goryainovcommunication}\\
						& 45 & 32 & 10 & 16 & \\
						& 48 & 36 & 11 & 16 & \\
						& 49 & 36 & 6 & 8 & 
					\end{tabular}
				\end{minipage} 
				\caption{Admissible parameters for strictly Neumaier graphs up to $64$ vertices. The last column indicates the number of such strictly Neumaier graphs, where~$\ge x$ means that~$x$ non-isomorphic examples are known to exist, but there may be more. New results from this article are highlighted in bold.}\label{table:neu:parametersstrictlyNG}
			\end{table}

	\bigskip
	
	\section{A proof for the uniqueness of the smallest strictly Neumaier graph}
	\label{sec:neu:uniqueNG16}
	In~\cite{evans2018smallest}, Evans, Goryainov and Panasenko computationally showed that there exists a unique strictly Neumaier graph with parameters~$(16,9,4;2,4)$, the smallest admissible order of a strictly Neumaier graph. This was done by computer search, starting out with a clique and constructing all graphs such that this clique is regular. Below, we instead provide a theoretical (computer-free) uniqueness proof for this parameter set. The benefit such a proof is that it gives us a better understanding of the local structure of the Neumaier graph and is more easily verifiable than a computer proof. Rather than starting from a clique, our proof explores a different approach, starting from a neighborhood of a vertex.
 \medskip
	
	A 9-regular graph~$\G$ on sixteen vertices has diameter two, since two non-adjacent vertices have at least four common neighbors. Therefore, we will often refer to the non-neighborhood of a vertex~$u\in V(\Gamma)$ as~$\Gamma_2(u)$. Figure~\ref{fig:neu:specialgraphs} shows three graphs that will play a role in our proofs: the diamond, paw and bowtie graph.
	
	\begin{figure}[!htb]
		\centering
		\begin{subfigure}[b]{0.19\textwidth}
			\centering
			\begin{adjustbox}{width=0.4\textwidth}
				\begin{tikzpicture}[scale = 1.5,nodes={draw,circle,fill}]
					\node (a) at (0,0) {};
					\node (b) at (0.5,-1) {};
					\node (c) at (-0.5,-1) {};
					\node (d) at (0,-2) {};
					\draw (b) -- (c) -- (a) -- (b) -- (d) -- (c);
				\end{tikzpicture}
			\end{adjustbox}
			\caption{}
			\label{fig:neu:diamondgraph}
		\end{subfigure}
		\begin{subfigure}[b]{0.19\textwidth}
			\centering
			\begin{adjustbox}{width=0.4\textwidth}
				\begin{tikzpicture}[scale = 1.5,nodes={draw,circle,fill}]
					\node (a) at (0,0) {};
					\node (b) at (0.5,-1) {};
					\node (c) at (-0.5,-1) {};
					\node (d) at (0,1) {};
					\draw (a) -- (b) -- (c) -- (a) -- (d);
				\end{tikzpicture}
			\end{adjustbox}
			\caption{}
			\label{fig:neu:pawgraph}
		\end{subfigure}
		\begin{subfigure}[b]{0.19\textwidth}
			\centering
			\begin{adjustbox}{width=0.4\textwidth}
				\begin{tikzpicture}[scale = 1.5,nodes={draw,circle,fill}]
					\node (a) at (0,0) {};
					\node (b) at (0.5,-1) {};
					\node (c) at (-0.5,-1) {};
					\node (d) at (0.5,1) {};
					\node (e) at (-0.5,1) {};
					\draw (a) -- (b) -- (c) -- (a) -- (e) -- (d) -- (a);
				\end{tikzpicture}
			\end{adjustbox}
			\caption{}
			\label{fig:neu:bowtiegraph}
		\end{subfigure}
		\caption{The diamond graph, paw graph and bowtie graph.}
		\label{fig:neu:specialgraphs}
	\end{figure}
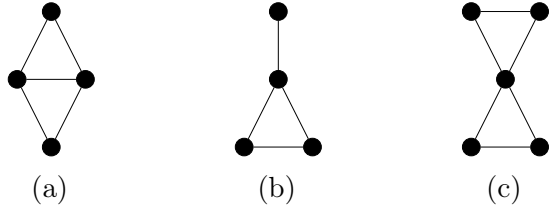

	\begin{lemma} A 4-regular graph on nine vertices which does not have the the diamond graph or~$K_4$ as a subgraph, either admits a partition into triangles or is isomorphic to the graph $\Gamma^{(1)}$ given in Figure~\ref{fig:neu:neighborhood1}\subref{fig:neu:neighborhoodcase1sub1}.
		\label{lem:neu:ninefourgraphs}
	\end{lemma}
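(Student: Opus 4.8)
The plan is to analyze a 4-regular graph $\Gamma$ on nine vertices that is both diamond-free and $K_4$-free, and to show that the triangles in $\Gamma$ are very constrained. First I would observe that, since $\Gamma$ is diamond-free, every edge lies in at most one triangle; equivalently, the triangles of $\Gamma$ are pairwise edge-disjoint, and no two triangles share an edge. Combined with $K_4$-freeness, this means each vertex $u$ has its 4-neighborhood $\Gamma(u)$ inducing a graph on four vertices with maximum degree $1$ (a diamond or $K_4$ would force a vertex of $\Gamma(u)$ to have two neighbors inside $\Gamma(u)$), hence $\Gamma(u)$ induces either the empty graph, a single edge, or a perfect matching on $4$ vertices. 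So every vertex lies in $0$, $1$, or $2$ triangles, and if it lies in $2$ triangles these two triangles meet only in $u$.

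Next I would count. Let $t$ be the number of triangles and, for $i\in\{0,1,2\}$, let $n_i$ be the number of vertices lying in exactly $i$ triangles, so $n_0+n_1+n_2=9$ and $\sum_i i\,n_i = 3t$, i.e. $n_1 + 2n_2 = 3t$. The total number of edges is $\tfrac{9\cdot 4}{2}=18$, and the number of edges lying in a triangle is exactly $3t$ (edge-disjointness of triangles), so $3t\le 18$, giving $t\le 6$. A partition into triangles requires exactly $t=3$ with all nine vertices covered, i.e. $n_0=n_1=9$, $n_2=0$ — wait, more precisely $n_1=9$, $n_0=n_2=0$. So I want to show: either $\Gamma$ has a spanning set of three vertex-disjoint triangles, or $\Gamma\cong\Gamma^{(1)}$. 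The strategy is to suppose $\Gamma$ has no triangle partition and to pin down its structure. A clean way in: consider a vertex $u$ with $\Gamma(u)$ inducing a perfect matching, say $\{a,b\},\{c,d\}$, so $uab$ and $ucd$ are triangles; the fifth "layer" consists of the remaining four vertices $\Gamma_2(u)=\{w,x,y,z\}$. Counting edges from $\{a,b,c,d\}$ to $\Gamma_2(u)$: each of $a,b,c,d$ has degree $4$, uses $2$ edges inside $\{u\}\cup\Gamma(u)$, hence sends exactly $2$ edges to $\Gamma_2(u)$; that is $8$ edges into $\Gamma_2(u)$. Each vertex of $\Gamma_2(u)$ has degree $4$, so $\Gamma_2(u)$ induces a graph with $\tfrac{1}{2}(16-8)=4$ internal edges on $4$ vertices — a $4$-cycle or $K_4$-minus-an-edge (a diamond) or other; diamond- and $K_4$-freeness force $\Gamma_2(u)$ to induce a $4$-cycle. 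Then I would use the diamond/$K_4$ constraints on the bipartite part $E(\Gamma(u),\Gamma_2(u))$ — each vertex of $\Gamma(u)$ has exactly two neighbors in the $4$-cycle, and no two of $a,b,c,d$ can have two common neighbors in $\Gamma_2(u)$ (that would create a $K_4$ or diamond once we also use the $4$-cycle edges) — to show the bipartite graph is essentially forced, yielding $\Gamma^{(1)}$ up to isomorphism.

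If instead every vertex lies in at most one triangle, then $n_2=0$, so $n_1=3t$ and $n_1\le 9$, giving $t\le 3$; the triangles are vertex-disjoint. If $t=3$ these three disjoint triangles span all nine vertices and we have a triangle partition, and if $t<3$ then at least three vertices lie in no triangle — here I would argue that $\Gamma$ being $4$-regular on $9$ vertices with so few triangles cannot happen, or can be routed into the $\Gamma^{(1)}$ case, by a short extremal count (a triangle-free $4$-regular graph on $9$ vertices would need $18$ edges with girth $\ge 4$, and a Moore-type or Turán-type bound rules this out, since $9 < 1+4+4\cdot 3$ is not the obstruction but the count $9$ vertices, $18$ edges, girth $\ge 4$ is incompatible by the Kővári–Sós–Turán / even-girth bound $e\le \tfrac12(1+\sqrt{4n-3})n$... ). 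The cleanest route is: in the at-most-one-triangle case, if some vertex lies in no triangle, trace its neighborhood, which is an independent set of size $4$, and push the degree count through the layers to reach a contradiction with diamond-/$K_4$-freeness, or else land back in the perfect-matching analysis at some other vertex.

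The main obstacle I expect is the case analysis in the perfect-matching scenario: once $\Gamma_2(u)$ is a $4$-cycle, ruling out all the ways the eight edges $E(\Gamma(u),\Gamma_2(u))$ could attach — while respecting that each of $a,b,c,d$ gets exactly degree $2$ into $\Gamma_2(u)$ and each of $w,x,y,z$ gets exactly degree $2$ from $\Gamma(u)$, with no forbidden subgraph appearing — is where the real work lies; this amounts to classifying $2$-regular bipartite graphs between two $4$-sets, of which there are few (a union of cycles: $C_8$, or $C_4\cup C_4$, or $C_6\cup C_2$ is impossible since $2$-regular has no $C_2$, so $C_8$ or $2C_4$), and then checking which of these, overlaid on the two $4$-cycles already present (in $\Gamma(u)$ it is two disjoint edges, not a $4$-cycle) and the $4$-cycle in $\Gamma_2(u)$, avoids diamonds and $K_4$ and fails to decompose into triangles — the surviving configuration being exactly $\Gamma^{(1)}$.
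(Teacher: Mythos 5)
Your overall strategy is sound and close in spirit to the paper's proof: both classify vertices by the induced subgraph on their neighborhood (which, as you correctly note, has maximum degree at most one by diamond-/$K_4$-freeness) and then count edges between $\Gamma(u)$, and $\Gamma_2(u)$. However, there are two genuine gaps, one of which is an outright false step. In the perfect-matching case you assert that ``no two of $a,b,c,d$ can have two common neighbors in $\Gamma_2(u)$.'' This is too strong: two \emph{non-adjacent} vertices of $\Gamma(u)$ may share two common neighbors in the $4$-cycle on $\Gamma_2(u)$, provided those common neighbors are antipodal (non-adjacent) in the cycle --- no diamond arises. Indeed, this is exactly the configuration realised in $\Gamma^{(1)}$ (the paper's Case~3.2(b), where each vertex of $\Gamma(x)$ is joined to an antipodal pair of the $4$-cycle). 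The correct restriction is only that the two \emph{matched} vertices of $\Gamma(u)$ have disjoint neighborhoods in $\Gamma_2(u)$, and that no two vertices of $\Gamma(u)$ share the same \emph{adjacent} pair of cycle vertices. As stated, your constraint would eliminate $\Gamma^{(1)}$ from the perfect-matching analysis, and since your other branch aims at a contradiction, you would end up ``proving'' that every such graph has a triangle partition --- which is false. Relatedly, you dismiss the possibility that $\Gamma_2(u)$ induces a paw graph without argument; the paw is itself diamond- and $K_4$-free, so excluding it requires the paper's observation that its degree-one vertex would need three further neighbors inside $\Gamma(u)$, forcing a matched pair into its neighborhood and hence a diamond.

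The second gap is the case of a vertex $u$ whose neighborhood is an independent set (equivalently, the ``too few triangles'' branch). You hope to close this by a Tur\'an/K\H{o}v\'ari--S\'os--Tur\'an count, but as you yourself notice the numbers do not rule it out, and in fact this case is \emph{not} vacuous: it is precisely where the paper first constructs $\Gamma^{(1)}$ (its Case~1.1, where the two edges inside $\Gamma_2(u)$ form a path; the disjoint-edges sub-case is the one that yields a contradiction). Your fallback --- ``land back in the perfect-matching analysis at some other vertex'' --- happens to be realisable for $\Gamma^{(1)}$ itself (it has a vertex lying in two triangles), but you give no argument that every diamond-free $4$-regular graph on nine vertices with an independent neighborhood must contain such a vertex; without that, the reduction is circular. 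To repair the proof you would need to carry out the layer analysis in the independent-neighborhood case directly, as the paper does, rather than deferring it.
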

	\begin{proof}
		Note that a 4-regular graph~$\G$ on nine vertices has eighteen edges and diameter two. Therefore, we will often refer to the non-neighborhood of a vertex as its set of vertices at distance 2. We will prove the claim by construction, starting with the graph~$K_{1,4}\cup 4K_1$ in Figure~\ref{fig:neu:neighborhoodbase}, which also contains the vertex labeling we will use throughout this proof. We will distinguish three cases, based on the number of edges in~$\Gamma(x)$. The construction steps of each case are illustrated in Figure~\ref{fig:neu:neighborhood1},~\ref{fig:neu:neighborhood2} and~\ref{fig:neu:neighborhood3} respectively, where each subfigure is labeled according with its written explanation.
		
		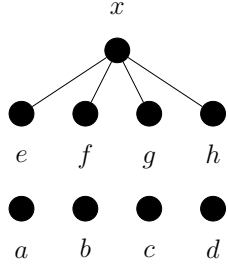
\begin{figure}[!htb]
			\centering
			\begin{adjustbox}{width=0.22\textwidth}
				\begin{tikzpicture}[nodes={draw, circle,fill}]
					\node[label=below:\strut$a$] (a) at (0,0.5){};
					\node[label=below:\strut$b$] (b) at (1,0.5){};
					\node[label=below:\strut$c$] (c) at (2,0.5){};
					\node[label=below:\strut$d$] (d) at (3,0.5){};
					\node[label=below:\strut$e$] (e) at (0,2){};
					\node[label=below:\strut$f$] (f) at (1,2){};
					\node[label=below:\strut$g$] (g) at (2,2){};
					\node[label=below:\strut$h$] (h) at (3,2){};
					\node[label=above:\strut$x$] (x) at (1.5,3){};
					
					\draw (f) -- (x) -- (e); 
					\draw (g) -- (x) -- (h);
					
				\end{tikzpicture}
			\end{adjustbox}
			\caption{The graph~$K_{1,4}\cup 4K_1$ and its vertex labeling.}
			\label{fig:neu:neighborhoodbase}
		\end{figure}
		
		\begin{case}{1}
			$\Gamma(x)$ is an independent set.
		\end{case}
		\noindent Since every vertex has degree four, there must be~$12$ edges between~$\Gamma(x)$ and~$\Gamma_2(x)$, hence~$\Gamma_2(x)$ must contain two edges. These two edges can either form a path or be disjoint. 
		\begin{case}{1.1}
			The edges in~$\Gamma_2(x)$ form a path.
		\end{case}
		\noindent Assume this path to be~$(a,b,c)$. 
	       All vertices in~$\Gamma(x)$ must be adjacent to~$d$ to ensure it has degree four. Up to this point, vertices~$e,f,g,h$ have identical neighborhoods. Each of them must have two more adjacencies with the set~$\{a,b,c\}$. Note that the endpoints of edges~$\{a,b\}$ and~$\{b,c\}$ may not have two common neighbors in~$\Gamma(x)$, because this induces a diamond graph. Therefore, we may without loss of generality assume that~$e$, $f$, $g$ and~$h$ are adjacent to~$\{a,b\}$,~$\{b,c\}$,~$\{a,c\}$ and~$\{a,c\}$, respectively. We will refer to the resulting graph, shown in Figure~\ref{fig:neu:neighborhood1}\subref{fig:neu:neighborhoodcase1sub1}, as~$\Gamma^{(1)}$. 
        
		\begin{case}{1.2}
			The edges in~$\Gamma_2(x)$ are disjoint.
		\end{case}
		\noindent Each of the vertices~$e$,~$f$,~$g$ and~$h$ is adjacent to two vertices in~$\Gamma_2(x)$ which span an edge. Since~$\Gamma_2(x)$ contains only two edges, there must be two vertices in~$\Gamma(x)$ adjacent to the endpoints of the same edge. This forms a diamond graph, contradicting the assumption.
		
		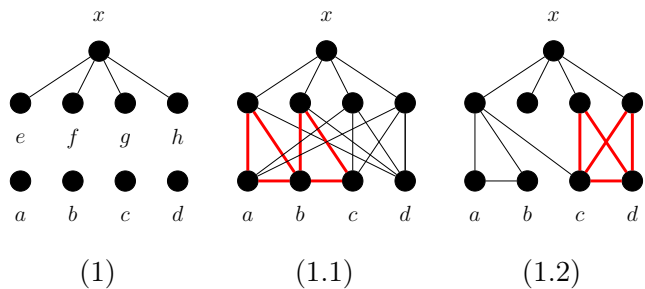
\begin{figure}[!htb]
			\centering
			\renewcommand{\thesubfigure}{(1)}
			\begin{subfigure}[b]{0.19\textwidth}
				\centering
				\begin{adjustbox}{width=0.95\textwidth}
					\begin{tikzpicture}[nodes={draw, circle,fill}]
						\node[label=below:\strut$a$] (a) at (0,0.5){};
						\node[label=below:\strut$b$] (b) at (1,0.5){};
						\node[label=below:\strut$c$] (c) at (2,0.5){};
						\node[label=below:\strut$d$] (d) at (3,0.5){};
						\node[label=below:\strut$e$] (e) at (0,2){};
						\node[label=below:\strut$f$] (f) at (1,2){};
						\node[label=below:\strut$g$] (g) at (2,2){};
						\node[label=below:\strut$h$] (h) at (3,2){};
						\node[label=above:\strut$x$] (x) at (1.5,3){};
						
						\draw (f) -- (x) -- (e); 
						\draw (g) -- (x) -- (h);
						
					\end{tikzpicture}
				\end{adjustbox}
				\caption{}
				\label{fig:neu:neighborhoodcase1}
			\end{subfigure}
			\renewcommand{\thesubfigure}{(1.1)}
			\begin{subfigure}[b]{0.19\textwidth}
				\centering
				\begin{adjustbox}{width=0.95\textwidth}
					\begin{tikzpicture}[nodes={draw, circle,fill}]
						\node[label=below:\strut$a$] (a) at (0,0.5){};
						\node[label=below:\strut$b$] (b) at (1,0.5){};
						\node[label=below:\strut$c$] (c) at (2,0.5){};
						\node[label=below:\strut$d$] (d) at (3,0.5){};
						\node (e) at (0,2){};
						\node (f) at (1,2){};
						\node (g) at (2,2){};
						\node (h) at (3,2){};
						\node[label=above:\strut$x$] (x) at (1.5,3){};
						
						\draw (f) -- (x) -- (e); 
						\draw (g) -- (x) -- (h);
						\draw (a) -- (b) -- (c);
						\draw (e) -- (d) -- (f);
						\draw (g) -- (d) -- (h);
						
						\draw[ultra thick, red] (a) -- (e) -- (b) -- (a);
						\draw[ultra thick, red] (b) -- (c) -- (f) -- (b);
						\draw (c) -- (g) -- (a);
						\draw (a) -- (h) -- (d);
						\draw (h) -- (c);
						
					\end{tikzpicture}
				\end{adjustbox}
				\caption{}
				\label{fig:neu:neighborhoodcase1sub1}
			\end{subfigure}
			\renewcommand{\thesubfigure}{(1.2)}
			\begin{subfigure}[b]{0.19\textwidth}
				\centering
				\begin{adjustbox}{width=0.95\textwidth}
					\begin{tikzpicture}[nodes={draw, circle,fill}]
						\node[label=below:\strut$a$] (a) at (0,0.5){};
						\node[label=below:\strut$b$] (b) at (1,0.5){};
						\node[label=below:\strut$c$] (c) at (2,0.5){};
						\node[label=below:\strut$d$] (d) at (3,0.5){};
						\node (e) at (0,2){};
						\node (f) at (1,2){};
						\node (g) at (2,2){};
						\node (h) at (3,2){};
						\node[label=above:\strut$x$] (x) at (1.5,3){};
						
						\draw (f) -- (x) -- (e); 
						\draw (g) -- (x) -- (h);

						\draw (e) -- (a) -- (b) -- (e) -- (c); 
						\draw[ultra thick, red] (c) -- (d) -- (g) -- (c);
						\draw[ultra thick, red] (c) -- (h) -- (d);
						
					\end{tikzpicture}
				\end{adjustbox}
				\caption{}
				\label{fig:neu:neighborhoodcase1sub2}
			\end{subfigure}
			\caption{Construction of all diamond-free 4-regular graphs on 9 vertices if~$\Gamma(x)$ is an independent set.}
			\label{fig:neu:neighborhood1}
		\end{figure}
		
		\begin{case}{2}
			$\Gamma(x)$ contains a single edge.
		\end{case}
		\noindent Without loss of generality, we assume vertices~$e$ and~$f$ to be adjacent, as shown in Figure~\ref{fig:neu:neighborhood2}. In this case, there will be ten edges between~$\Gamma(x)$ and~$\Gamma_2(x)$. Therefore,~$\Gamma_2(x)$ has three edges, which form either a triangle, a star (i.e.~$K_{1,3}$) or a path. 
		
		\begin{case}{2.1}
			The edges in~$\Gamma_2(x)$ form a triangle.
		\end{case}
		\noindent Without loss of generality, let~$a,b,c$ be the vertices of this triangle. All vertices in~$\Gamma(x)$ must be adjacent to~$d$ to ensure it has degree four. This induces a diamond graph on~$x$,~$e$,~$f$ and~$d$, contradicting the assumption.
		
		\begin{case}{2.2}
			The edges in~$\Gamma_2(x)$ form a star.
		\end{case}
		\noindent Assume~$b$ to be the center vertex of the star. The neighborhoods of~$g$ and~$h$ may not share an edge of the star, as this would induce a diamond graph. Therefore, at least one of them, without loss of generality~$h$, must be adjacent to~$a$,~$c$ and~$d$. Note that the neighborhood of~$h$ is then an independent set, so this graph is isomorphic to~$\Gamma^{(1)}$.
		
		\begin{case}{2.3}
			The edges in~$\Gamma_2(x)$ form a path.
		\end{case}
		\noindent Without loss of generality, let the path be~$\{a,b,c,d\}$. 
		Vertex~$h$ cannot be adjacent to three consecutive path vertices, because this induces a diamond graph. Without loss of generality, it is therefore adjacent to~$a$,~$c$ and~$d$. Similarly, vertex~$g$ cannot be adjacent to three consecutive path vertices, and additionally may not share an edge with~$\Gamma(h)$, so~$g$ is adjacent to~$a$,~$b$ and~$d$. This give rise to three disjoint triangles,~$\{x,e,f\}$,~$\{a,b,g\}$ and~$\{c,d,h\}$, so if this construction leads to a diamond-free graph, it must have a partition into triangles. 
		
		\begin{figure}[!htb]
			\centering
			\renewcommand{\thesubfigure}{(2)}
			\begin{subfigure}[b]{0.19\textwidth}
				\centering
				\begin{adjustbox}{width=0.95\textwidth}
					\begin{tikzpicture}[nodes={draw, circle,fill}]
						\node[label=below:\strut$a$] (a) at (0,0.5){};
						\node[label=below:\strut$b$] (b) at (1,0.5){};
						\node[label=below:\strut$c$] (c) at (2,0.5){};
						\node[label=below:\strut$d$] (d) at (3,0.5){};
						\node[label=below:\strut$e$] (e) at (0,2){};
						\node[label=below:\strut$f$] (f) at (1,2){};
						\node[label=below:\strut$g$] (g) at (2,2){};
						\node[label=below:\strut$h$] (h) at (3,2){};
						\node[label=above:\strut$x$] (x) at (1.5,3){};
						
						\draw (f) -- (x) -- (e) -- (f); 
						\draw (g) -- (x) -- (h);
						
					\end{tikzpicture}
				\end{adjustbox}
				\caption{}
				\label{fig:neu:neighborhoodcase2}
			\end{subfigure}
			\renewcommand{\thesubfigure}{(2.1)}
			\begin{subfigure}[b]{0.19\textwidth}
				\centering
				\begin{adjustbox}{width=0.95\textwidth}
					\begin{tikzpicture}[nodes={draw, circle,fill}]
						\node[label=below:\strut$a$] (a) at (0,0.5){};
						\node[label=below:\strut$b$] (b) at (1,0.5){};
						\node[label=below:\strut$c$] (c) at (2,0.5){};
						\node[label=below:\strut$d$] (d) at (3,0.5){};
						\node (e) at (0,2){};
						\node (f) at (1,2){};
						\node (g) at (2,2){};
						\node (h) at (3,2){};
						\node[label=above:\strut$x$] (x) at (1.5,3){};
						
						\draw (f) -- (x) -- (e) -- (f); 
						\draw (g) -- (x) -- (h);
						
						\draw[ultra thick, red] (x) -- (e) -- (f) -- (x);
						\draw[ultra thick, red] (e) -- (d) -- (f);
						\draw (a) -- (b) -- (c);
						\draw (a) to [out=335,in=205] (c);
						\draw (g) -- (d) -- (h);
						
					\end{tikzpicture}
				\end{adjustbox}
				\caption{}
				\label{fig:neu:neighborhoodcase2sub1a}
			\end{subfigure}
			\renewcommand{\thesubfigure}{(2.2)}
			\begin{subfigure}[b]{0.19\textwidth}
				\centering
				\begin{adjustbox}{width=0.95\textwidth}
					\begin{tikzpicture}[nodes={draw, circle,fill}]
						\node[label=below:\strut$a$] (a) at (0,0.5){};
						\node[label=below:\strut$b$] (b) at (1,0.5){};
						\node[label=below:\strut$c$] (c) at (2,0.5){};
						\node[label=below:\strut$d$] (d) at (3,0.5){};
						\node (e) at (0,2){};
						\node (f) at (1,2){};
						\node (g) at (2,2){};
						\node (h) at (3,2){};
						\node[label=above:\strut$x$] (x) at (1.5,3){};
						\node[fill = none, draw=red, ultra thick, dashed, inner sep = 8pt] (y) at (3,2){};
						
						\draw (f) -- (x) -- (e) -- (f); 
						\draw (g) -- (x) -- (h);
						
						\draw (a) -- (b) -- (c);
						\draw (b) to [out=340,in=200] (d);
						\draw (a) -- (h) -- (c);
						\draw (h) -- (d);
						
					\end{tikzpicture}
				\end{adjustbox}
				\caption{}
				\label{fig:neu:neighborhoodcase2sub2a}
			\end{subfigure}
			\renewcommand{\thesubfigure}{(2.3)}
			\begin{subfigure}[b]{0.19\textwidth}
				\centering
				\begin{adjustbox}{width=0.95\textwidth}
					\begin{tikzpicture}[nodes={draw, circle,fill}]
						\node[label=below:\strut$a$] (a) at (0,0.5){};
						\node[label=below:\strut$b$] (b) at (1,0.5){};
						\node[label=below:\strut$c$] (c) at (2,0.5){};
						\node[label=below:\strut$d$] (d) at (3,0.5){};
						\node (e) at (0,2){};
						\node (f) at (1,2){};
						\node (g) at (2,2){};
						\node (h) at (3,2){};
						\node[label=above:\strut$x$] (x) at (1.5,3){};
						
						\draw (f) -- (x) -- (e) -- (f); 
						\draw (g) -- (x) -- (h);
						
						\draw (a) -- (b) -- (c) -- (d);
						\draw (a) -- (h) -- (c);
						\draw (h) -- (d);
						
						\draw[ultra thick, red] (x) -- (e) -- (f) -- (x);
						\draw[ultra thick, red] (a) -- (b) -- (g) -- (a);
						\draw[ultra thick, red] (d) -- (h) -- (c) -- (d);
						\draw (g) -- (d);
						
					\end{tikzpicture}
				\end{adjustbox}
				\caption{}
				\label{fig:neu:neighborhoodcase2sub3}
			\end{subfigure}
			\caption{Construction of all diamond-free 4-regular graphs on 9 vertices if~$\Gamma(x)$ contains one edge.}
			\label{fig:neu:neighborhood2}
		\end{figure}
		
		\begin{case}{3}
			$\Gamma(x)$ contains at least two edges.
		\end{case}
		\noindent Note that~$\Gamma(x)$ contains exactly two edges, otherwise $\{x\}\cup\Gamma(x)$ (and thus also $\Gamma$) has the diamond graph or~$K_4$ as a subgraph. These two edges must be disjoint, otherwise $\Gamma$ also admits a diamond graph as subgraph. Without loss of generality, we can assume that~$e\sim f$ and~$g\sim h$. Then there are eight edges between~$\Gamma(x)$ and~$\Gamma_2(x)$, so the induced graph on~$\Gamma_2(x)$ has four edges. These edges either form a 4-cycle or the paw graph from Figure~\ref{fig:neu:pawgraph}.
		
		\begin{case}{3.1}
			The edges in~$\Gamma_2(x)$ form a paw graph. 
		\end{case}
		\noindent Let~$d$ be the vertex of degree one (see Figure~\ref{fig:neu:neighborhood3}\subref{fig:neu:neighborhoodcase3sub1a}). Vertex~$d$ has three more neighbors, so its neighborhood includes either~$\{e,f\}$ or~$\{g,h\}$. This creates a diamond graph, contradicting the assumption.
		
		\begin{case}{3.2}
			The edges in~$\Gamma_2(x)$ form a 4-cycle.
		\end{case}
		\noindent All vertices of~$\Gamma(x)$ have two neighbors in~$\Gamma_2(x)$. These two neighbors are either adjacent in the 4-cycle or not. If they are adjacent, no other vertex in~$\Gamma(x)$ can have the same two neighbors. Moreover,~$e$ and~$f$ (and similarly~$g$ and~$h$) have no common neighbors in~$\Gamma_2(x)$, because this would also induce a diamond graph, contradicting the assumption. So, either~$e$ and~$f$ (and similarly~$g$ and~$h$) are both adjacent to consecutive cycle vertices or both are not. The following two cases then cover all options up to symmetry.
		
		\begin{dense_enumerate}{label=(\alph*)}
			\item Suppose without loss of generality that~$e$ is adjacent to two consecutive cycle vertices. Then so is~$f$, which leads to the graph in Figure~\ref{fig:neu:neighborhood3}\subref{fig:neu:neighborhoodcase3sub2a}. This graph admits a  partition into triangles.
			\item If~$e$,~$f$,~$g$ and~$h$ each have two neighbors in the cycle that are not adjacent, we get the graph in Figure~\ref{fig:neu:neighborhood3}\subref{fig:neu:neighborhoodcase3sub2b}. The neighborhood of~$a$ is an independent set, so this graph must be isomorphic to~$\Gamma^{(1)}$.\qedhere
		\end{dense_enumerate}
	\end{proof}
	
	\begin{figure}[!htb]
		\centering
		\renewcommand{\thesubfigure}{(3)}
		\begin{subfigure}[b]{0.19\textwidth}
			\centering
			\begin{adjustbox}{width=0.95\textwidth}
				\begin{tikzpicture}[nodes={draw, circle,fill}]
					\node[label=below:\strut$a$] (a) at (0,0.5){};
					\node[label=below:\strut$b$] (b) at (1,0.5){};
					\node[label=below:\strut$c$] (c) at (2,0.5){};
					\node[label=below:\strut$d$] (d) at (3,0.5){};
					\node[label=below:\strut$e$] (e) at (0,2){};
					\node[label=below:\strut$f$] (f) at (1,2){};
					\node[label=below:\strut$g$] (g) at (2,2){};
					\node[label=below:\strut$h$] (h) at (3,2){};
					\node[label=above:\strut$x$] (x) at (1.5,3){};
					
					\draw (f) -- (x) -- (e) -- (f); 
					\draw (g) -- (x) -- (h) -- (g);
					
				\end{tikzpicture}
			\end{adjustbox}
			\caption{}
			\label{fig:neu:neighborhoodcase3}
		\end{subfigure}
		\renewcommand{\thesubfigure}{(3.1)}
		\begin{subfigure}[b]{0.19\textwidth}
			\centering
			\begin{adjustbox}{width=0.95\textwidth}
				\begin{tikzpicture}[nodes={draw, circle,fill}]
					\node[label=below:\strut$a$] (a) at (0,0.5){};
					\node[label=below:\strut$b$] (b) at (1,0.5){};
					\node[label=below:\strut$c$] (c) at (2,0.5){};
					\node[label=below:\strut$d$] (d) at (3,0.5){};
					\node (e) at (0,2){};
					\node (f) at (1,2){};
					\node (g) at (2,2){};
					\node (h) at (3,2){};
					\node[label=above:\strut$x$] (x) at (1.5,3){};
					
					\draw (f) -- (x) -- (e) -- (f); 
					\draw (g) -- (x) -- (h) -- (g);
					
					\draw (a) to [out=335,in=205] (c);
					\draw (a) -- (b) -- (c) -- (d);
					\draw (f) -- (d);
					\draw[ultra thick, red] (x) -- (g) -- (h) -- (x);
					\draw[ultra thick, red] (g) -- (d) -- (h);
					
				\end{tikzpicture}
			\end{adjustbox}
			\caption{}
			\label{fig:neu:neighborhoodcase3sub1a}
		\end{subfigure}
		\renewcommand{\thesubfigure}{(3.2a)}
		\begin{subfigure}[b]{0.19\textwidth}
			\centering
			\begin{adjustbox}{width=0.95\textwidth}
				\begin{tikzpicture}[nodes={draw, circle,fill}]
					\node[label=below:\strut$a$] (a) at (0,0.5){};
					\node[label=below:\strut$b$] (b) at (1,0.5){};
					\node[label=below:\strut$c$] (c) at (2,0.5){};
					\node[label=below:\strut$d$] (d) at (3,0.5){};
					\node (e) at (0,2){};
					\node (f) at (1,2){};
					\node (g) at (2,2){};
					\node (h) at (3,2){};
					\node[label=above:\strut$x$] (x) at (1.5,3){};
					
					\draw[] (f) -- (x) -- (e) -- (f); 
					\draw[ultra thick, red] (g) -- (x) -- (h) -- (g);
					
					\draw[] (a) to [out=340,in=200] (d);
					\draw[] (b) -- (c);
					\draw[ultra thick, red] (a) -- (b);
					\draw[ultra thick, red] (c) -- (d);
					\draw[ultra thick, red] (a) -- (e) -- (b);
					\draw[ultra thick, red] (c) -- (f) -- (d);
					
				\end{tikzpicture}
			\end{adjustbox}
			\caption{}
			\label{fig:neu:neighborhoodcase3sub2a}
		\end{subfigure}
		\renewcommand{\thesubfigure}{(3.2b)}
		\begin{subfigure}[b]{0.19\textwidth}
			\centering
			\begin{adjustbox}{width=0.95\textwidth}
				\begin{tikzpicture}[nodes={draw, circle,fill}]
					\node[label=below:\strut$a$] (a) at (0,0.5){};
					\node[label=below:\strut$b$] (b) at (1,0.5){};
					\node[label=below:\strut$c$] (c) at (2,0.5){};
					\node[label=below:\strut$d$] (d) at (3,0.5){};
					\node (e) at (0,2){};
					\node (f) at (1,2){};
					\node (g) at (2,2){};
					\node (h) at (3,2){};
					\node[label=above:\strut$x$] (x) at (1.5,3){};
					\node[fill = none, draw=red, ultra thick, dashed, inner sep = 8pt] (y) at (0,0.5){};
					
					\draw[ultra thick, red] (f) -- (x) -- (e) -- (f); 
					\draw[ultra thick, red] (g) -- (x) -- (h) -- (g);
					
					\draw (a) to [out=340,in=200] (d);
					\draw (a) -- (b) -- (c) -- (d);
					\draw (a) -- (e) -- (c);
					\draw (a) -- (g) -- (c);
					\draw (b) -- (f) -- (d);
					\draw (b) -- (h) -- (d);
					
				\end{tikzpicture}
			\end{adjustbox}
			\caption{}
			\label{fig:neu:neighborhoodcase3sub2b}
		\end{subfigure}
		\caption{Construction of all diamond-free 4-regular graphs on 9 vertices if~$\Gamma(x)$ contains two edges.}
		\label{fig:neu:neighborhood3}
	\end{figure}
	
	\begin{lemma} 
		If~$\Gamma$ is a strictly Neumaier graph with parameters~$(16,9,4;2,4)$, then~$\Gamma$ has a vertex~$u\in V(\Gamma)$ such that~$\Gamma(v)\simeq \Gamma^{(1)}$.
		\label{lem:neu:gamma1}
	\end{lemma}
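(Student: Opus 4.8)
The plan is to argue by contradiction. Suppose no vertex $u$ satisfies $\Gamma(u)\simeq\Gamma^{(1)}$; I will deduce that $\Gamma$ is strongly regular with parameters $(16,9,4,6)$, which is impossible since $\Gamma$ is \emph{strictly} Neumaier.

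First I would record the local structure at an arbitrary vertex $w$. The graph induced on $\Gamma(w)$ has $k=9$ vertices and is $4$-regular, because each $v\in\Gamma(w)$ has exactly $\lambda=4$ common neighbours with $w$. This induced graph has no $K_4$, since together with $w$ that would give a $K_5$ while the largest clique of $\Gamma$ has size $s=4$ (Theorem~\ref{th:maxcliques}); and it has no diamond, since an induced diamond $\{a,b,c,d\}\subseteq\Gamma(w)$ with non-edge $cd$ would make $\{w,a,b,c\}$ a clique of size $s=4$, hence a $2$-regular clique (Theorem~\ref{th:maxcliques}, Theorem~\ref{th:existenceconditions}), while the vertex $d$ would have three neighbours in it, contradicting $e=2$. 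Thus $\Gamma(w)$ satisfies the hypotheses of Lemma~\ref{lem:neu:ninefourgraphs}, so it either admits a partition into triangles or is isomorphic to $\Gamma^{(1)}$; by our assumption, \emph{every} $\Gamma(w)$ admits a partition into triangles.

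The engine of the argument is the following uniqueness fact: \emph{if $\{x,y,z\}$ is a triangle of $\Gamma$ and a vertex $a\notin\{x,y,z\}$ is adjacent to all of $x,y,z$, then $a$ is the only such vertex}, because $\{a,x,y,z\}$ is then a clique of size $s=4$, hence a $2$-regular clique, so a second vertex adjacent to all three would have three neighbours in this clique, contradicting $e=2$. Now fix any vertex $u$ and set $R:=\Gamma_2(u)$, so $|R|=16-1-9=6$. For $v\in R$, write $\Gamma(v)$ as a disjoint union of three triangles $T_1,T_2,T_3$. Each $\{v\}\cup T_i$ is a $K_4$, so by the uniqueness fact $v$ is the unique vertex adjacent to all of $T_i$; since $u\neq v$ and $u\notin T_i$ (as $T_i\subseteq\Gamma(v)$ and $u\not\sim v$), the vertex $u$ is non-adjacent to some vertex of $T_i$, which therefore lies in $R$. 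As $T_1,T_2,T_3$ are disjoint, $v$ has at least three neighbours in $R$. On the other hand, each of the nine vertices of $\Gamma(u)$ has exactly $4$ neighbours in $R$ (degree $9$: one neighbour is $u$, four lie in $\Gamma(u)$), so there are $36$ edges between $\Gamma(u)$ and $R$, hence $(6\cdot 9-36)/2=9$ edges with both endpoints in $R$; thus the numbers of neighbours that the six vertices of $R$ have inside $R$ sum to $18$. Together with the lower bound $3$ for each, every $v\in R$ has exactly $3$ neighbours in $R$, i.e.\ $|\Gamma(u)\cap\Gamma(v)|=9-3=6$. Since $u$ was arbitrary, any two non-adjacent vertices of $\Gamma$ have exactly $6$ common neighbours, so $\Gamma$ is co-edge-regular with $\mu=6$ and hence strongly regular — a contradiction. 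Therefore some vertex $u$ has $\Gamma(u)\simeq\Gamma^{(1)}$.

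The main obstacle I expect is the middle of the third paragraph: seeing that the triangle-partition hypothesis, routed through the ``unique completion of a triangle to a $K_4$'', forces an entire triangle of each $\Gamma(v)$ to lie outside $\Gamma(u)$, and then extracting co-edge-regularity from the tight degree count in $\Gamma_2(u)$. The bookkeeping afterwards ($|\Gamma_2(u)|=6$, the $4$-regularity of $\Gamma(w)$, the $36$ cross edges, the $9$ edges inside $\Gamma_2(u)$) is elementary, but it must be checked carefully, since the contradiction relies on all of these counts being exact.
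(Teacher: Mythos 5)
Your proof is correct, and its overall architecture coincides with the paper's: establish that every vertex neighborhood is a $4$-regular, diamond-free, $K_4$-free graph on nine vertices, invoke Lemma~\ref{lem:neu:ninefourgraphs}, and derive a contradiction by showing that if no neighborhood is $\Gamma^{(1)}$ then $\Gamma$ is $6$-co-edge-regular, hence strongly regular. The only genuine divergence is in how the ``exactly six common neighbours'' conclusion is reached. The paper argues directly at the fixed vertex $u$: the triangle partition of $\Gamma(u)$ yields three $2$-regular $4$-cliques meeting only in $u$, so any $w\in\Gamma_2(u)$ has exactly two neighbours in each of the three triangles, giving six at once. You instead apply the triangle partition at each $v\in\Gamma_2(u)$, use the uniqueness of the $K_4$-completion of a triangle (itself a consequence of $2$-regularity via Theorem~\ref{th:maxcliques}) to force at least three neighbours of $v$ inside $\Gamma_2(u)$, and then close the gap with an exact edge count between $\Gamma(u)$ and $\Gamma_2(u)$. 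Both arguments rest on the same fact ($e$-regularity of all $4$-cliques); the paper's version is a one-line deduction, while yours trades that for a correct but longer counting argument. All of your counts ($|\Gamma_2(u)|=6$, the $36$ cross edges, the sum $18$ of internal degrees) check out.
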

	\begin{proof}
		Note that the neighborhood of a vertex in~$\Gamma$ may not contain the diamond graph as a subgraph, because this induces a pair of cliques which intersect in three vertices, violating the 2-regularity of the cliques and thus contradicting Theorem~\ref{th:maxcliques}$\ref{it:allCreg}$. It also does not contain a~$K_4$, as this would violate Theorem~\ref{th:maxcliques}(i). Suppose that no~$u\in V(\Gamma)$ has a neighborhood isomorphic to~$\Gamma^{(1)}$. Then by Lemma~\ref{lem:neu:ninefourgraphs} the neighborhood of any vertex~$u\in V(\Gamma)$ contains a partition into triangles, so~$\Gamma$ contains three cliques which intersect only in~$u$. Because of the 2-regularity, every vertex in~$\Gamma_2(u)$ must have two neighbors in each of these cliques, and hence it has exactly six neighbors in~$\Gamma(u)$. However, this means that~$\Gamma$ is~$6$-co-edge-regular, and hence not strictly Neumaier.
	\end{proof}
	
	\begin{lemma} \leavevmode
		\begin{dense_enumerate}{label=(\roman*),align=left,leftmargin=*}
			\item There exists a unique graph on six vertices with degree sequence~$(1,3,3,3,3,5)$.\label{lem:neu:nonexistence1}
			\item There are no graphs on six vertices with degree sequence~$(1,1,4,4,4,4)$, $(1,2,2,\allowbreak3,5,5)$ or~$(1,2,2,4,4,5)$. \label{lem:neu:nonexistence2}
			\item If a graph has degree sequence~$(1,2,3,3,4,5)$, it must have~$K_4$ as a subgraph.\label{lem:neu:nonexistence3}
		\end{dense_enumerate}
		\label{lem:neu:nonexistence}
	\end{lemma}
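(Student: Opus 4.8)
The plan is to dispatch each of the six-vertex degree sequences by a short direct argument. Observe first that every sequence listed sums to $18$, so each graph in question has exactly nine edges. All of the sequences except $(1,1,4,4,4,4)$ contain a vertex $z$ of degree $5$; such a $z$ is necessarily adjacent to every other vertex, and consequently any vertex of degree $1$ is adjacent to $z$ and to nothing else. Deleting $z$ therefore leaves a graph on five vertices in which every degree has dropped by exactly one, the degree-$1$ vertices becoming isolated, so it suffices to analyse the residual graph induced on the four remaining non-trivial vertices.

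For part \ref{lem:neu:nonexistence1}, after deleting $z$ the residual graph on the four non-isolated vertices is $2$-regular, hence a $4$-cycle, and the $4$-cycle on four labelled vertices is unique up to isomorphism; conversely, joining a new vertex to a $4$-cycle and attaching a further pendant vertex to that new vertex realises $(1,3,3,3,3,5)$, so the graph exists and is unique. For the sequence $(1,2,2,3,5,5)$ in part \ref{lem:neu:nonexistence2} there are two vertices of degree $5$, both adjacent to everything, so the degree-$1$ vertex would have degree at least $2$, a contradiction. For $(1,2,2,4,4,5)$, deleting $z$ leaves a four-vertex graph with degree sequence $(1,1,3,3)$; the two degree-$3$ vertices are adjacent to all three others, forcing each degree-$1$ vertex to have degree at least $2$, again impossible. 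For the remaining sequence $(1,1,4,4,4,4)$ there is no degree-$5$ vertex, and here I would instead count edges directly: writing $A$ for the set of four vertices of degree $4$, each vertex of $A$ has at most two neighbours among the two degree-$1$ vertices, so at least $16-2=14$ units of degree are absorbed inside $A$, i.e.\ $A$ spans at least $7$ edges; but $|A|=4$ admits at most $\binom{4}{2}=6$ edges, a contradiction.

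For part \ref{lem:neu:nonexistence3}, deleting the degree-$5$ vertex $z$ leaves (besides one isolated vertex) a graph on four vertices, say $a,b,c,d$, with degree sequence $(1,2,2,3)$. The vertex $d$ of degree $3$ is adjacent to $a$, $b$ and $c$; the degree-$1$ vertex $a$ is then adjacent only to $d$; and each of $b$ and $c$ needs exactly one further neighbour, which can only be the other, so $b\sim c$. Hence $\{b,c,d\}$ induces a triangle in the residual graph, and since $z$ is adjacent to each of $b$, $c$ and $d$, the set $\{z,b,c,d\}$ induces a $K_4$ in the original graph.

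The lemma is a finite case check, so I expect no genuine obstacle; the only points needing a little care are confirming that the residual graph in \ref{lem:neu:nonexistence1} is forced to be $C_4$ and that $(1,3,3,3,3,5)$ is actually realisable, and checking in \ref{lem:neu:nonexistence3} that the residual four-vertex graph is uniquely determined, so that the triangle — and hence the $K_4$ — is unavoidable.
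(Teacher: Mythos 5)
Your proof is correct, and it is organized differently from the paper's. The paper treats each sequence ad hoc: part (i) is built up from a star plus a forced $4$-cycle; for $(1,1,4,4,4,4)$ it observes that some degree-$4$ vertex is adjacent to neither degree-$1$ vertex and so cannot reach degree $4$; for $(1,2,2,4,4,5)$ it uses a pigeonhole argument on the common neighbours of the two degree-$4$ vertices; and for part (iii) it counts edges directly around the degree-$4$ and degree-$5$ vertices. You instead systematically delete the degree-$5$ vertex and analyse the residual four-vertex graph, which gives a uniform and arguably cleaner treatment of $(1,3,3,3,3,5)$, $(1,2,2,4,4,5)$ and $(1,2,3,3,4,5)$ (your forced triangle $\{b,c,d\}$ together with $z$ is the same $K_4$ the paper finds, just pinned down more explicitly); the $(1,2,2,3,5,5)$ case is identical in both, and for $(1,1,4,4,4,4)$ your edge count inside the set $A$ of degree-$4$ vertices replaces the paper's adjacency argument. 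Both approaches are sound; yours buys uniformity, the paper's individual arguments are each a line or two shorter.

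One small imprecision to fix in the $(1,1,4,4,4,4)$ case: the clause ``each vertex of $A$ has at most two neighbours among the two degree-$1$ vertices'' only bounds the degree leaving $A$ by $4\cdot 2=8$, which would not yield a contradiction. What you actually need (and what your computation $16-2=14$ uses) is that the total number of edges between $A$ and the two degree-$1$ vertices is at most $2$, since each degree-$1$ vertex contributes at most one such edge. With that justification the count of at least $7>\binom{4}{2}$ edges inside $A$ goes through.
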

	\begin{proof}\leavevmode
		\begin{enumerate}[label=(\roman*),align=left,leftmargin=*]
			\item This graph can be constructed uniquely in the following way. Begin with a star graph on 6 vertices. The center vertex has degree five, the others degree one. This means there are four vertices which have two additional neighbors. The only way to achieve this is by adding a 4-cycle. 
			\item Suppose there exists a graph with degree sequence~$(1,1,4,4,4,4)$. The two vertices of degree one each have one (possibly the same) neighbor of degree four. Then there exists a vertex of degree four which is not adjacent to either, a contradiction.
			
			Suppose~$\Gamma$ is a graph on 6 vertices with degree sequence~$(1,2,2,3,5,5)$. The two vertices of degree five must both be adjacent to all others. However, this is not possible, because~$\Gamma$ has a vertex of degree one.
			
			If a graph has degree sequence~$(1,2,2,4,4,5)$, its two vertices of degree four have (at least) two common neighbors by the pigeonhole principle. One of these must be the vertex of degree five, the other we will call~$w$. Note that the vertex of degree five is adjacent to all others, so~$w$ has degree at least three. This contradicts the degree sequence, as all other vertices should have degree at most two.
			\item Suppose~$\Gamma$ is a graph with the given degree sequence and let~$u$ and~$w$ denote the vertices of degree four and five. Then necessarily~$w\sim u$ and~$|\Gamma(w)\cap \Gamma(u)| = 3$. The remaining neighbor of~$w$ is the vertex of degree one. Note that~$\Gamma$ has nine edges in total, so there must be one additional edge between the vertices of~$\Gamma(u)\cap \Gamma(w)$, creating a~$K_4$.\qedhere
		\end{enumerate}
	\end{proof}
	
	\begin{theorem}\label{th:neu:ng16}
		Up to isomorphism, there exists a unique strictly Neumaier graph with parameter set~$(16,9,4;2,4)$.
	\end{theorem}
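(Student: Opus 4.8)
The plan is to start from the local structure guaranteed by Lemmas~\ref{lem:neu:ninefourgraphs} and \ref{lem:neu:gamma1} and to show that it propagates to a unique graph. Let $\Gamma$ be a strictly Neumaier graph with parameters $(16,9,4;2,4)$; existence being known from \cite{evans2018smallest}, the real content is uniqueness. By Lemma~\ref{lem:neu:gamma1} fix a vertex $u$ with $\Gamma(u)\simeq\Gamma^{(1)}$ and write $N=\Gamma(u)$, $M=\Gamma_2(u)$, so $|N|=9$ and $|M|=6$ (recall $\Gamma$ has diameter two). Since $\lambda=4$, each vertex of $N$ has exactly $4$ neighbours inside $N$ and hence exactly $4$ in $M$; therefore there are $36$ edges between $N$ and $M$, and comparing with the degree sum over $M$ shows that the subgraph induced on $M$ has exactly $9$ edges. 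The graph $\Gamma^{(1)}$ contains exactly two triangles, which meet in a single vertex $b$ (forming a bowtie), so the only $4$-cliques of $\Gamma$ through $u$ are $Q_1=\{u\}\cup T_1$ and $Q_2=\{u\}\cup T_2$ with $T_1\cap T_2=\{b\}$; by Theorem~\ref{th:maxcliques} both are $2$-regular.

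Next I would use these two $2$-regular cliques to pin down the edges between $M$ and $N$. Each $p\in M$ has exactly two neighbours in $T_1$ and exactly two in $T_2$, whence $|\Gamma(p)\cap(T_1\cup T_2)|=4$ if $p\not\sim b$ and $=3$ if $p\sim b$; since $b$ has exactly four neighbours in $M$, precisely four vertices of $M$ fall in the latter case. Combining this with the facts that every neighbourhood $\Gamma(w)$ is $4$-regular and diamond- and $K_4$-free and that $\lambda=4$ must hold along every edge — applied in particular to the edges incident to the four vertices of $N$ outside $T_1\cup T_2$, which in $\Gamma^{(1)}$ split into two pairs of non-adjacent twins — should determine, up to the evident symmetries, the whole bipartite graph between $N$ and $M$ and then the $9$-edge graph induced on $M$. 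The $6$-vertex graphs arising along the way (the induced graph on $M=\Gamma_2(u)$ and the induced graphs on the sets $\Gamma_2(p)$, $p\in M$) have degree sequences that must lie on the list handled in Lemma~\ref{lem:neu:nonexistence}: the sequences of part~(ii) are impossible, the sequence $(1,2,3,3,4,5)$ is excluded because it would force a $K_4$, contradicting Theorem~\ref{th:maxcliques}, and $(1,3,3,3,3,5)$ admits a unique realization. Pushing this analysis through forces every edge of $\Gamma$, and a final verification that $\lambda=4$ holds on all remaining pairs (equivalently, that the graph obtained coincides with the one of \cite{evans2018smallest}) shows that $\Gamma$ is unique up to isomorphism.

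The main obstacle is exactly the case analysis of the second paragraph: even after the clique-regularity and twin reductions there remain several inequivalent ways to attach $M$ to $N$, and each branch has to be pursued until it either closes up into a consistent graph or collides with one of the hard constraints ($\lambda=4$, the maximum-clique bound $s=4$, or the $2$-regularity of some $4$-clique). Lemma~\ref{lem:neu:nonexistence} is the device that keeps this bounded and short, since it turns the recurring question ``which $6$-vertex graph can sit here?'' into a one-line lookup; I expect no single deduction to be deep, but the simultaneous bookkeeping of all the local edge-regularity and clique conditions is where the work lies.
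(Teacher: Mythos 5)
Your setup is correct and is essentially the paper's: fix $u$ with $\Gamma(u)\simeq\Gamma^{(1)}$ via Lemma~\ref{lem:neu:gamma1}, observe that the two triangles of $\Gamma^{(1)}$ form a bowtie and hence give exactly two $4$-cliques through $u$ meeting in an edge, invoke Theorem~\ref{th:maxcliques} to make both of them $2$-regular, count $36$ edges between $\Gamma(u)$ and $\Gamma_2(u)$ and $9$ edges inside $\Gamma_2(u)$, and use Lemma~\ref{lem:neu:nonexistence} to control the possible induced graphs on $\Gamma_2(u)$. These preliminary deductions all check out, including your observation that exactly four vertices of $\Gamma_2(u)$ are adjacent to the bowtie centre and that the four vertices of $\Gamma(u)$ outside the bowtie split into two pairs of non-adjacent twins.

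The gap is that the argument stops exactly where the theorem's content begins. Phrases such as ``should determine, up to the evident symmetries, the whole bipartite graph between $N$ and $M$'' and ``pushing this analysis through forces every edge of $\Gamma$'' assert the outcome of the case analysis without performing it. In the paper that analysis is the bulk of the proof: one first shows there are exactly two non-isomorphic ways to attach the bowtie to $\Gamma_2(u)$; within each, one branches on $|\Gamma(a)\cap\Gamma(c)\cap\Gamma_2(u)|\in\{2,3,4\}$ for a twin pair $a,c$, and then on the placement of the remaining edges, producing roughly a dozen subcases. Each subcase must be eliminated by a specific argument --- a forbidden degree sequence from Lemma~\ref{lem:neu:nonexistence}, a vertex forced to have degree six inside the six-element set $\Gamma_2(u)$, a $4$-clique avoiding $u$ and hence violating $2$-regularity, or an adjacent pair acquiring five common neighbours --- and several branches require combining two or three of these constraints. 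Exactly one subcase survives, and there one must still argue that the unique graph with degree sequence $(1,3,3,3,3,5)$ embeds into $\Gamma_2(u)$ in only one way (using that its degree-three vertices are equivalent under its automorphism group) and verify that the resulting $16$-vertex graph is indeed strictly Neumaier. As written, your proposal is a correct plan with the same architecture as the paper's proof, but it establishes neither the nonexistence of the spurious branches nor the uniqueness of the survivor, which together are the whole theorem.
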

	\begin{proof}
		Suppose that~$\Gamma$ is a strictly Neumaier graph with parameters~$(16,9,4;2,4)$. Lemma~\ref{lem:neu:gamma1} implies that there must be a vertex~$u\in V(\Gamma)$ such that~$\Gamma(u) \simeq \Gamma^{(1)}$. We will use the vertex labeling of Figure~\ref{fig:neu:gamma(u)} to refer to the vertices of~$\Gamma(u)$. Together with~$u$, the bowtie subgraph on~$B:=(\Gamma(x)\cap \Gamma(u))\cup \{x\}$ forms two 4-cliques, which intersect in~$u$ and~$x$. Within~$\Gamma(u)$, every vertex has two neighbors in each of these cliques if it is not contained in it, so all vertices of this subgraph comply with the 2-regularity requirement. This must also be the case for all vertices in~$\Gamma_2(u)$. We will show that this uniquely determines the edges between the~$B$ and~$\Gamma_2(u)$, up to isomorphism. 
		
		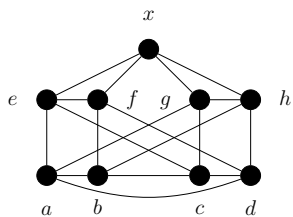
\begin{figure}[!htb]
			\centering
			\begin{adjustbox}{width=0.28\textwidth}
				\begin{tikzpicture}[nodes={draw, circle,fill}]
					\node[label=below:\strut$a$] (a) at (-1,0.5){};
					\node[label=below:\strut$b$] (b) at (0,0.5){};
					\node[label=below:\strut$c$] (c) at (2,0.5){};
					\node[label=below:\strut$d$] (d) at (3,0.5){};
					\node[label= left:\strut$e$] (e) at (-1,2){};
					\node[label= right:\strut$f$] (f) at (0,2){};
					\node[label= left:\strut$g$] (g) at (2,2){};
					\node[label= right:\strut$h$] (h) at (3,2){};
					\node[label=above:\strut$x$] (x) at (1,3){};
					
					\draw (f) -- (x) -- (e) -- (f); 
					\draw (g) -- (x) -- (h) -- (g);
					
					\draw (a) to [out=340,in=200] (d);
					\draw (a) -- (b) -- (c) -- (d);
					\draw (a) -- (e) -- (c);
					\draw (a) -- (g) -- (c);
					\draw (b) -- (f) -- (d);
					\draw (b) -- (h) -- (d);
					
				\end{tikzpicture}
			\end{adjustbox}
			\caption{The neighborhood of vertex~$u$.}
			\label{fig:neu:gamma(u)}
		\end{figure}
		
		Consider the subgraph spanned by~$B\cup\Gamma_2(u)$. The vertices of~$B$ must have four more neighbors in~$\Gamma_2(u)$ each, and every vertex of~$\Gamma_2(u)$ must have two neighbors in the cliques~$\{u, x, e, f\}$ and~$\{u, x, g, h\}$ because of 2-regularity. Therefore, two vertices of~$\Gamma_2(u)$ are adjacent to~$e$,~$f$,~$g$ and~$h$, while the others are adjacent to~$x$ and one vertex from each of the sets~$\{e,f\}$ and~$\{g,h\}$. Up to symmetry, there are two ways to choose these edges. Either two vertices of~$\Gamma_2(u)$ are adjacent to~$e$ and~$g$ and two to~$f$ and~$h$, or the four remaining vertices are adjacent to~$e$ and~$g$,~$g$ and~$f$,~$f$ and~$h$ and~$h$ and~$e$, respectively. Figure~\ref{fig:neu:bowtiefigure} shows both options, as well as the vertex labeling we will assume throughout the rest of the proof for~$\Gamma_2(u)$.
		
		\begin{figure}[!htb]
			\centering
			\begin{subfigure}[b]{0.3\textwidth}
				\centering
				\begin{adjustbox}{width=0.95\textwidth}
					\begin{tikzpicture}[nodes={draw, circle,fill}]
						\node[label=above:\strut\Large$v_1$] (v1) at (-1.5,2.5){};
						\node[label=above:\strut\Large$v_2$] (v2) at (1.5,2.5){};
						\node[label=above:\strut\Large$v_3$] (v3) at (-3,0){};
						\node[label=above:\strut\Large$v_4$] (v4) at (3,0){};
						\node[label=below:\strut\Large$v_5$] (v5) at (-1.5,-2.5){};
						\node[label=below:\strut\Large$v_6$] (v6) at (1.5,-2.5){};
						\node[label= above:\strut\Large$e$] (e) at (-1.5,1){};
						\node[label= below:\strut\Large$f$] (f) at (-1.5,-1){};
						\node[label= above:\strut\Large$g$] (g) at (1.5,1){};
						\node[label= below:\strut\Large$h$] (h) at (1.5,-1){};
						\node[label=above:\strut\Large$x$] (x) at (0,0){};
						
						\draw (f) -- (x) -- (g) -- (h) -- (x) -- (e) -- (f); 
						
					\end{tikzpicture}
				\end{adjustbox}
				\caption{}
				\label{fig:neu:bowtie}
			\end{subfigure}
			\begin{subfigure}[b]{0.32\textwidth}
				\centering
				\begin{adjustbox}{width=0.95\textwidth}
					\begin{tikzpicture}[nodes={draw, circle,fill}]
                        \clip (-4.5,-4.5) rectangle (4.5,4.5);
						\node (v1) at (-1.5,2.5){};
						\node (v2) at (1.5,2.5){};
						\node (v3) at (-3,0){};
						\node (v4) at (3,0){};
						\node (v5) at (-1.5,-2.5){};
						\node (v6) at (1.5,-2.5){};
						\node (e) at (-1.5,1){};
						\node (f) at (-1.5,-1){};
						\node (g) at (1.5,1){};
						\node (h) at (1.5,-1){};
						\node (x) at (0,0){};
						
						\draw (f) -- (x) -- (g) -- (h) -- (x) -- (e) -- (f); 
						\draw (e) -- (v1) -- (x) -- (v2) -- (g);
						\draw (v1) -- (g);
						\draw (v2) -- (e);
						\draw (e) -- (v3) -- (f);
						\draw (g) -- (v4) -- (h);
						\draw (f) -- (v5) -- (x) -- (v6) -- (h);
						\draw (v5) -- (h);
						\draw (v6) -- (f);
						
						\draw (v3) to [out=120,in=50, looseness=2.7] (g);
						\draw (v3) to [out=240,in=-50, looseness=2.7] (h);
						\draw (v4) to [out=60,in=130, looseness=2.5] (e);
						\draw (v4) to [out=-60,in=-130, looseness=2.5] (f);
						
					\end{tikzpicture}
				\end{adjustbox}
				\caption{}
				\label{fig:neu:bowtiecomplete}
			\end{subfigure}
			\begin{subfigure}[b]{0.32\textwidth}
				\centering
				\begin{adjustbox}{width=0.95\textwidth}
					\begin{tikzpicture}[nodes={draw, circle,fill}]
						\clip (-4.5,-4.5) rectangle (4.5,4.5);
                        \node (v1) at (-1.5,2.5){};
						\node (v2) at (1.5,2.5){};
						\node (v3) at (-3,0){};
						\node (v4) at (3,0){};
						\node (v5) at (-1.5,-2.5){};
						\node (v6) at (1.5,-2.5){};
						\node (e) at (-1.5,1){};
						\node (f) at (-1.5,-1){};
						\node (g) at (1.5,1){};
						\node (h) at (1.5,-1){};
						\node (x) at (0,0){};
						
						\draw (f) -- (x) -- (g) -- (h) -- (x) -- (e) -- (f); 
						\draw (e) -- (v1) -- (x) -- (v2) -- (g);
						\draw (v1) -- (g);
						\draw (v2) to [out=130,in=175, looseness=3] (f);
						\draw (e) -- (v3) -- (f);
						\draw (g) -- (v4) -- (h);
						\draw (f) -- (v5) -- (x) -- (v6) -- (h);
						\draw (v5) -- (h);
						\draw (v6) to [out=-130,in=185, looseness=3] (e);
						
						\draw (v3) to [out=120,in=50, looseness=2.7] (g);
						\draw (v3) to [out=240,in=-50, looseness=2.7] (h);
						\draw (v4) to [out=60,in=130, looseness=2.5] (e);
						\draw (v4) to [out=-60,in=-130, looseness=2.5] (f);
					\end{tikzpicture}
				\end{adjustbox}
				\caption{}
				\label{fig:neu:bowtiecomplete2}
			\end{subfigure}
			\caption{The edges between~$B$ and~$\Gamma_2(u)$.}
			\label{fig:neu:bowtiefigure}
		\end{figure}
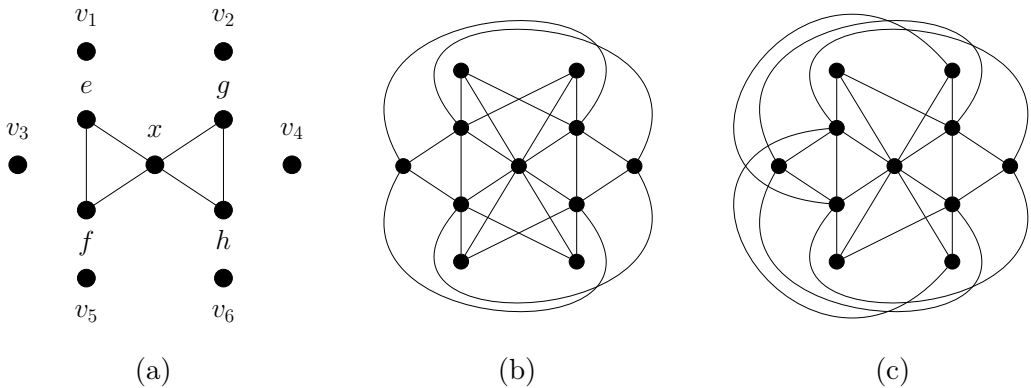
		
		\begin{case}{1} \label{case:case1} The edges between~$B$ and~$\Gamma_2(u)$ are as in Figure~\ref{fig:neu:bowtiecomplete}.
		\end{case}
		\noindent We will now focus on the remaining edges between~$\Gamma(u)$ and~$\Gamma_2(u)$. Vertices~$a$,~$b$,~$c$ and~$d$ must have four neighbors in~$\Gamma_2(u)$ since~$\Gamma$ is 9-regular. This means that~$a$ and~$c$ could have two, three or four common neighbors in~$\Gamma_2(u)$. We branch on each of these cases, denoting~$S:=\Gamma(a)\cap \Gamma(c) \cap\Gamma_2(u)$ for short. Before we do so, we note the following useful fact. 
		
		Both~$a$ and~$c$ are adjacent to~$e$ and~$g$. To satisfy~$4$-edge-regularity, they both must have three common neighbors with~$e$ as well as with~$g$ in~$\Gamma_2(u)$, as they currently only have~$u$ as a common neighbor. Therefore,~$a$ and~$c$ must have three neighbors in the set~$\{v_1, v_2, v_3, v_4\}$ and one in~$\{v_5, v_6\}$. Similarly,~$b$ and~$d$ have one neighbor in~$\{v_1, v_2\}$ and three in~$\{v_3, v_4, v_5, v_6\}$. Since~$a$ and~$c$ are also adjacent to~$b$ and~$d$, both~$a$ and~$c$ must have three common neighbors in~$\Gamma_2(u)$ with~$b$ as well as with~$d$. 
		
		\begin{case}{1.1}
			$|S| = 2$.
		\end{case}
		\noindent By the pigeonhole principle, the two common neighbors lie in~$\{v_1, v_2, v_3, v_4\}$. It also follows from the above observation that they are adjacent to~$b$ and~$d$, otherwise these vertices cannot have three common neighbors with both~$a$ and~$c$. As a result,~$S$ cannot include both~$v_1$ and~$v_2$. We may therefore assume without loss of generality that either~$S = \{v_3, v_4\}$ or~$S=\{v_1,v_3\}$. We show that neither option leads to a valid strictly Neumaier graph.
		
		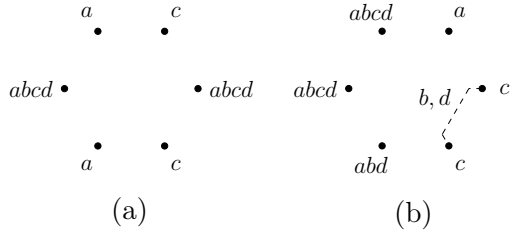
\begin{figure}[!htb]
			\centering
			\begin{subfigure}[c]{0.24\textwidth}
				\centering
				\begin{adjustbox}{width=0.95\textwidth}
					\begin{tikzpicture}[scale=0.4]
						\foreach \x in {0,60,120,180,240,300}{
							\draw[fill=black] (\x:3) circle (4pt);
						}
						\node at (120:4) {$a$};
						\node at (60:4){$c$};
						\node at (180:4.5){$abcd$};
						\node at (0:4.5){$abcd$};
						\node at (240:4){$a$};
						\node at (300:4){$c$};
						\node[draw=none,fill=none] at(0:5.5){};
						\node[draw=none,fill=none] at(180:5.5){};
					\end{tikzpicture}
				\end{adjustbox}
				\caption{}
				\label{fig:neu:scase1sub1a}
			\end{subfigure}
			\begin{subfigure}[c]{0.24\textwidth}
				\centering
				\begin{adjustbox}{width=0.95\textwidth}
					\begin{tikzpicture}[scale=0.4]
						\foreach \x in {0,60,120,180,240,300}{
							\draw[fill=black] (\x:3) circle (4pt);
						}
						\node at (120:4) {$abcd$};
						\node at (60:4){$a$};
						\node at (180:4.5){$abcd$};
						\node at (0:4){$c$};
						\node at (240:4){$abd$};
						\node at (300:4){$c$};
						
						\draw[dashed] (0:3) to (0:2.4);
						\draw[dashed] (300:2.4) to (300:3);
						\draw[dashed] (0:2.4) to (300:2.4);
						\node at (330:1){$b,d$};
						\node[draw=none,fill=none] at(0:5.5){};
						\node[draw=none,fill=none] at(180:5.5){};
					\end{tikzpicture}
				\end{adjustbox}
				\caption{}
				\label{fig:neu:scase1sub1b}
			\end{subfigure}
			\caption{The edges between~$\Gamma_2(u)$ and~$\{a, b, c, d\}$ when~$|S| = 2$.}
			\label{fig:neu:scases1sub1}
		\end{figure}

        \begin{enumerate}
            \item[(a)] If~$S = \{v_3, v_4\}$, we know that~$v_3, v_4 \sim b, d$. As~$a$ and~$c$ have identical neighborhoods so far, we may also assume that~$a\sim v_1, v_5$ and~$c\sim v_2, v_6$, which results in the graph schematically depicted in Figure~\ref{fig:neu:scase1sub1a}. Both~$b$ and~$d$ have one more neighbor in the sets~$\{v_1, v_2\}$ and~$\{v_5, v_6\}$, and one more common neighbor with both~$a$ and~$c$. Either they share the same two neighbors, without loss of generality~$v_1$ and~$v_6$, or they have distinct neighbors, say~$b\sim v_1, v_6$ and~$d\sim v_2, v_5$. In the first case, the subgraph induced on~$\Gamma_2(u)$ has two vertices of degree five,~$v_2$ and~$v_5$, which must be adjacent to all other vertices. However,~$v_3$ and~$v_4$ have degree one in this graph, so this is not possible. The second case implies that the subgraph induced on~$\Gamma_2(u)$ has degree sequence~$(1,1,4,4,4,4)$, contradicting Lemma~\ref{lem:neu:nonexistence}$\ref{lem:neu:nonexistence2}$. 
            \item[(b)] If~$S=\{v_1,v_3\}$, we know that~$v_1, v_3 \sim b, d$ and every vertex in~$\Gamma_2(u)\setminus S$ is adjacent to either~$a$ or~$c$. Without loss of generality, let~$a\sim v_2, v_5$ and~$c\sim v_4, v_6$. Note that~$b,d\not\sim v_2$, because we showed at the start of Case~\ref{case:case1} that they have exactly one neighbor in~$\{v_1, v_2\}$. Then~$b,d\sim v_5$, and they both have one neighbor among~$v_4, v_6$. Figure~\ref{fig:neu:scase1sub1b} shows an overview of these edges. For the induced graph on~$\Gamma_2(u)$, this leads to degree sequence~$(1,2,3,3,4,5)$ or~$(1,2,2,3,5,5)$. By Lemma~\ref{lem:neu:nonexistence}$\ref{lem:neu:nonexistence3}$, the first option is not possible, since it creates a 4-clique in which~$u$ has no neighbors. In the second case, two vertices have degree five in~$\Gamma_2(u)$, so they must both be adjacent to all others. However, this cannot happen, because~$\Gamma_2(u)$ also has a vertex of degree one.
        \end{enumerate}

		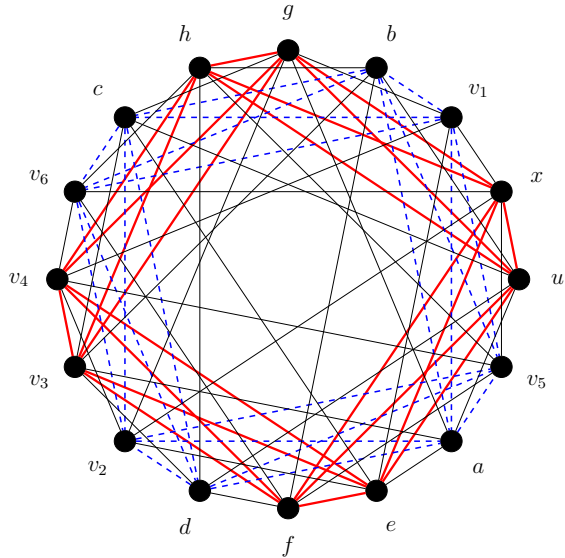
\begin{figure}[!htb]
			\centering
			\begin{adjustbox}{width=0.5\textwidth}
				\begin{tikzpicture}[scale = 1.4,nodes={draw=none, fill=none}]
					
					\node[draw, circle,fill] (u) at (0:3) {};
					\node[draw, circle,fill] (x) at (22.5:3){};
					\node[draw, circle,fill] (v1) at (45:3){};
					\node[draw, circle,fill] (b) at (67.5:3){};
					\node[draw, circle,fill] (g) at (90:3){};
					\node[draw, circle,fill] (h) at (112.5:3){};
					\node[draw, circle,fill] (c) at (135:3) {};
					\node[draw, circle,fill] (v6) at (157.5:3){};
					\node[draw, circle,fill] (v4) at (180:3){};
					\node[draw, circle,fill] (v3) at (202.5:3){};
					\node[draw, circle,fill] (v2) at (225:3){};
					\node[draw, circle,fill] (d) at (247.5:3){};
					\node[draw, circle,fill] (f) at (270:3) {};
					\node[draw, circle,fill] (e) at (292.5:3){};
					\node[draw, circle,fill] (a) at (315:3){};
					\node[draw, circle,fill] (v5) at (337.5:3){};
					
					\node at (0:3.5) {$u$};
					\node at (22.5:3.5){$x$};
					\node at (45:3.5){$v_1$};
					\node at (67.5:3.5){$b$};
					\node at (90:3.5){$g$};
					\node at (112.5:3.5){$h$};
					\node at (135:3.5) {$c$};
					\node at (157.5:3.5){$v_6$};
					\node at (180:3.5){$v_4$};
					\node at (202.5:3.5){$v_3$};
					\node at (225:3.5){$v_2$};
					\node at (247.5:3.5){$d$};
					\node at (270:3.5) {$f$};
					\node at (292.5:3.5){$e$};
					\node at (315:3.5){$a$};
					\node at (337.5:3.5){$v_5$};
					
					\draw[very thick, red] (u) --  (x);
					\draw[very thick, red] (g) --  (x);
					\draw[very thick, red] (u) --  (g);
					\draw[very thick, red] (g) --  (h);
					\draw[very thick, red] (h) --  (x);
					\draw[very thick, red] (u) --  (h);
					
					\draw[very thick, red] (e) --  (f);
					\draw[very thick, red] (v3) --  (f);
					\draw[very thick, red] (e) --  (v3);
					\draw[very thick, red] (v3) --  (v4);
					\draw[very thick, red] (v4) --  (f);
					\draw[very thick, red] (e) --  (v4);

					\draw[very thick, red] (e) --  (x);
					\draw[very thick, red] (u) --  (e);
					\draw[very thick, red] (f) --  (x);
					\draw[very thick, red] (u) --  (f);
					
					\draw[very thick, red] (g) --  (v3);
					\draw[very thick, red] (h) --  (v3);
					\draw[very thick, red] (g) --  (v4);
					\draw[very thick, red] (h) --  (v4);
					
					\draw[thick, dashed, blue] (v1) -- (b);
					\draw[thick, dashed, blue] (v1) -- (c);
					\draw[thick, dashed, blue] (v1) -- (v6);
					\draw[thick, dashed, blue] (b) -- (c);
					\draw[thick, dashed, blue] (b) -- (v6);
					\draw[thick, dashed, blue] (c) -- (v6);
					
					\draw[thick, dashed, blue] (d) -- (v2);
					\draw[thick, dashed, blue] (d) -- (a);
					\draw[thick, dashed, blue] (d) -- (v5);
					\draw[thick, dashed, blue] (v2) -- (a);
					\draw[thick, dashed, blue] (v2) -- (v5);
					\draw[thick, dashed, blue] (a) -- (v5);
					
					\draw[thick, dashed, blue] (v1) -- (a);
					\draw[thick, dashed, blue] (v1) -- (v5);
					\draw[thick, dashed, blue] (b) -- (a);
					\draw[thick, dashed, blue] (b) -- (v5);
					
					\draw[thick, dashed, blue] (c) -- (d);
					\draw[thick, dashed, blue] (c) -- (v2);
					\draw[thick, dashed, blue] (v6) -- (d);
					\draw[thick, dashed, blue] (v6) -- (v2);
					
					\draw (x) -- (v1);
					\draw (x) -- (v6);
					\draw (x) -- (v2);
					\draw (x) -- (v5);
					\draw (v1) -- (g);
					\draw (v1) -- (v4);
					\draw (v1) -- (e);
					\draw (b) -- (h);
					\draw (b) -- (v3);
					\draw (b) -- (f);
					\draw (b) -- (u);
					\draw (g) -- (c);
					\draw (g) -- (v2);
					\draw (g) -- (a);
					\draw (h) -- (v6);
					\draw (h) -- (d);
					\draw (h) -- (v5);
					\draw (c) -- (v3);
					\draw (c) -- (e);
					\draw (c) -- (u);
					\draw (v6) -- (v4);
					\draw (v6) -- (f);
					\draw (v4) -- (v2);
					\draw (v4) -- (v5);
					\draw (v3) -- (d);
					\draw (v3) -- (a);
					\draw (v2) -- (e);
					\draw (d) -- (f);
					\draw (d) -- (u);
					\draw (f) -- (v5);
					\draw (e) -- (a);
					\draw (a) -- (u);
					
				\end{tikzpicture}
			\end{adjustbox}
			\caption{The unique strictly Neumaier graph on 16 vertices. Its maximum cliques are indicated by thick and dashed lines.}
			\label{fig:neu:ng16complete}
		\end{figure}
		
		\begin{case}{1.2}
			$|S| = 3$.
		\end{case}
		\noindent If~$|S| = 3$, there is exactly one vertex in~$\Gamma_2(u)$ that is adjacent to~$a$ and not to~$c$ and exactly one vertex in~$\Gamma_2(u)$ that is adjacent to~$c$ and not to~$a$. Observe that if~$b$ or~$d$ is adjacent to one of these vertices it must also be adjacent to the other, since it must have the same number of common neighbors with~$a$ and~$c$. There are, up to symmetry, five ways to choose~$S$, namely~$\{v_1, v_2, v_3\}, \{v_1, v_2, v_5\}, \{v_3, v_4, v_5\}, \{v_1, v_3, v_5\}$ and~$\{v_1, v_3, v_4\}$, which also uniquely determine the remaining two vertices that are adjacent to~$a$ or~$c$ (note that we could interchange~$a$ and~$c$, since they have the same set of neighbors up to this point). Each of these cases is illustrated in Figure~\ref{fig:neu:scase1sub2a}-\ref{fig:neu:scase1sub2e}. Below, we derive the remaining edges of~$b$ and~$d$ for all of them.
		
		\begin{figure}[!htb]
			\centering
			\begin{subfigure}[b]{0.19\textwidth}
				\centering
				\begin{adjustbox}{width=0.95\textwidth}
					\begin{tikzpicture}[scale=0.4]
						\foreach \x in {0,60,120,180,240,300}{
							\draw[fill=black] (\x:3) circle (4pt);
						}
						\node at (120:4) {$ac$};
						\node at (60:4){$ac$};
						\node at (180:4.5){$abcd$};
						\node at (0:4.5){};
						\node at (240:4){$abd$};
						\node at (300:4){$bcd$};
						
						\draw[dashed] (111:4.5) to (110:4.8);
						\draw[dashed] (69:4.5) to (70:4.8);
						\draw[dashed] (110:4.8) to (70:4.8);
						\node at (90:5.2){$b,d$};
						
						\node[draw=none,fill=none] at(0:5.5){};
						\node[draw=none,fill=none] at(180:5.5){};
					\end{tikzpicture}
				\end{adjustbox}
				\caption{}
				\label{fig:neu:scase1sub2a}
			\end{subfigure}
			\begin{subfigure}[b]{0.19\textwidth}
				\centering
				\begin{adjustbox}{width=0.95\textwidth}
					\begin{tikzpicture}[scale=0.4]
						\foreach \x in {0,60,120,180,240,300}{
							\draw[fill=black] (\x:3) circle (4pt);
						}
						\node at (120:4) {$ac$};
						\node at (60:4){$ac$};
						\node at (180:4.4){$abd$};
						\node at (0:4.4){$bcd$};
						\node at (240:4){$abcd$};
						\node at (300:4){};
						
						\draw[dashed] (111:4.5) to (110:4.8);
						\draw[dashed] (69:4.5) to (70:4.8);
						\draw[dashed] (110:4.8) to (70:4.8);
						\node at (90:5.2){$b,d$};
						
						\node[draw=none,fill=none] at(0:5.5){};
						\node[draw=none,fill=none] at(180:5.5){};
					\end{tikzpicture}
				\end{adjustbox}
				\caption{}
				\label{fig:neu:scase1sub2b}
			\end{subfigure}
			\begin{subfigure}[b]{0.19\textwidth}
				\centering
				\begin{adjustbox}{width=0.95\textwidth}
					\begin{tikzpicture}[scale=0.4]
						\foreach \x in {0,60,120,180,240,300}{
							\draw[fill=black] (\x:3) circle (4pt);
						}
						\node at (120:4) {$a$};
						\node at (60:4){$c$};
						\node at (180:4.2){$ac$};
						\node at (0:4.2){$ac$};
						\node at (240:4){$ac$};
						\node at (300:4){};
						\node[draw=none,fill=none] at(0:5.5){};
						\node[draw=none,fill=none] at(180:5.5){};
					\end{tikzpicture}
				\end{adjustbox}
				\caption{}
				\label{fig:neu:scase1sub2c}
			\end{subfigure}\\
            \vspace{0.5cm}
			\begin{subfigure}[b]{0.38\textwidth}
				\centering
				\begin{adjustbox}{width=0.475\textwidth}
					\begin{tikzpicture}[scale=0.4]
						\foreach \x in {0,60,120,180,240,300}{
							\draw[fill=black] (\x:3) circle (4pt);
						}
						\node at (120:4) {$acd$};
						\node at (60:4){$ab$};
						\node at (180:4.5){$abcd$};
						\node at (0:4.2){$bc$};
						\node at (240:4){$abcd$};
						\node at (300:4){$d$};
						\node[draw=none,fill=none] at(0:5.5){};
						\node[draw=none,fill=none] at(180:5.5){};
					\end{tikzpicture}
				\end{adjustbox}
				\begin{adjustbox}{width=0.475\textwidth}
					\begin{tikzpicture}[scale=0.4]
						\foreach \x in {0,60,120,180,240,300}{
							\draw[fill=black] (\x:3) circle (4pt);
						}
						\node at (120:4) {$abcd$};
						\node at (60:4){$a$};
						\node at (180:4.5){$abcd$};
						\node at (0:4){$c$};
						\node at (240:4){$abcd$};
						\node at (300:4){$bd$};
						\node[draw=none,fill=none] at(0:5.5){};
						\node[draw=none,fill=none] at(180:5.5){};
					\end{tikzpicture}
				\end{adjustbox}
				\caption{}
				\label{fig:neu:scase1sub2d}
			\end{subfigure}
			\begin{subfigure}[b]{0.19\textwidth}
				\centering
				\begin{adjustbox}{width=0.95\textwidth}
					\begin{tikzpicture}[scale=0.4]
						\foreach \x in {0,60,120,180,240,300}{
							\draw[fill=black] (\x:3) circle (4pt);
						}
						\node at (120:4) {$abcd$};
						\node at (60:4){};
						\node at (180:4.2){$ac$};
						\node at (0:4.2){$ac$};
						\node at (240:4){$abd$};
						\node at (300:4){$bcd$};
						\draw[dashed] (12:3) to (0:3);
						\draw[dashed] (180:3) to (168:3);
						\draw[dashed] (168:3) to (12:3);
						\node at (90:1.3){$b,d$};
						\node[draw=none,fill=none] at(0:5.5){};
						\node[draw=none,fill=none] at(180:5.5){};
					\end{tikzpicture}
				\end{adjustbox}
				\caption{}
				\label{fig:neu:scase1sub2e}
			\end{subfigure}
			\caption{The edges between~$\Gamma_2(u)$ and~$\{a, b, c, d\}$ when~$|S| = 3$ .}
			\label{fig:neu:scases1sub2}
		\end{figure}
  
		\begin{dense_enumerate}{label=(\alph*)}
			\item Suppose that~$S = \{v_1, v_2, v_3\}$. Vertices~$b$ and~$d$ are adjacent to only one vertex in~$\{v_1, v_2\}$, hence their remaining neighbors in~$\Gamma_2(u)$ must be adjacent to~$a$ and~$c$. If they both have the same neighbor in~$\{v_1, v_2\}$, this leads to degree sequence~$(1,2,3,3,4,5)$ for the subgraph induced by~$\Gamma_2(u)$, and otherwise we get~$(1,3,3,3,3,5)$. The first sequence does not create a valid Neumaier graph, as Lemma~\ref{lem:neu:nonexistence}$\ref{lem:neu:nonexistence3}$ implies that~$\Gamma_2(u)$ contains a 4-clique. This clique is not 2-regular, because~$u$ has no neighbors in it. In the second case, we may assume that~$b\sim v_1$ and~$d\sim v_2$, since~$b$ and~$d$ have the same neighborhood up to this point. We know from Lemma~\ref{lem:neu:nonexistence}$\ref{lem:neu:nonexistence1}$ that there exists a unique graph with degree sequence $(1,3,3,3,3,5)$. Moreover, in this graph, the vertices of degree three are symmetric under its automorphism group. Therefore, we can uniquely embed this graph into $\Gamma_2(u)$ up to isomorphism. It is easily checked that the resulting graph is strictly Neumaier with eight 2-regular 4-cliques. The full graph is shown in Figure~\ref{fig:neu:ng16complete}.
			\item Let~$S = \{v_1, v_2, v_5\}$. Analogous to the previous case,~$b$ and~$d$ must share all neighbors of~$a$ and~$c$ among~$v_3, \dots, v_6$. As a result,~$v_6$ is not adjacent to~$a, b, c$ and~$d$, so it must have six neighbors in~$\Gamma_2(u)$, but this is not possible.
			\item If~$S=\{v_3, v_4, v_5\}$, then~$a$ and~$c$ have distinct neighbors in~$\{v_1, v_2\}$. Vertices~$b$ and~$d$ also have one neighbor in~$\{v_1, v_2\}$, but by the above observation they must be adjacent to both~$v_1$ and~$v_2$ or neither, a contradiction.
			\item Assume that~$S = \{v_1, v_3, v_5\}$. Then~$a\sim v_2$ and~$c\sim v_4$ (or vice versa), so~$b$ (and also~$d$) is either adjacent to both~$v_2$ and~$v_4$ or none of these vertices. First, assume that~$b\sim v_2, v_4$. In that case, we also know that~$b\sim v_3, v_5$, since it has three common neighbors with~$a$ and~$c$. Vertex~$v_6$ must then be adjacent to~$d$, since it has at most 5 neighbors in~$\Gamma_2(u)$, and it has degree nine. This implies that~$d\sim v_1, v_3, v_5$. Therefore,~$\Gamma_2(u)$ must have degree sequence~$(1,2,3,3,4,5)$, so by Lemma~\ref{lem:neu:nonexistence}$\ref{lem:neu:nonexistence3}$ it contains a 4-clique. This violates 2-regularity, because~$u$ has no neighbors in this clique. Note that the case~$d\sim v_2, v_4$ is entirely symmetric, so we may now assume that both~$b$ and~$d$ are not adjacent to~$v_2$ and~$v_4$. However, then we must have~$b,d\sim v_1, v_3, v_5, v_6$, which implies that the graph spanned by~$\Gamma_2(u)$ has degree sequence~$(1,2,2,4,4,5)$, contradicting Lemma~\ref{lem:neu:nonexistence}$\ref{lem:neu:nonexistence2}$.
			\item Finally, let~$S = \{v_1, v_3, v_4\}$. Then~$v_5$ is adjacent to~$a$ but not to~$c$, and~$v_6$ is adjacent to~$c$ but not to~$a$ (or vice versa, but this case is symmetric). By the pigeonhole principle,~$b$ and~$d$ have at least one neighbor in~$\{v_5, v_6\}$, so they must be adjacent to both~$v_5$ and~$v_6$. This means they~$b$ and~$d$ will have two common neighbors with~$a$ as well as with~$c$ within~$\{v_3, v_4, v_5, v_6\}$, so they must also both be adjacent to~$v_1$. Then~$v_2$ is not adjacent to~$a$,~$b$,~$c$ and~$d$, which means that it has six neighbors in~$\Gamma_2(u)$, a contradiction.
		\end{dense_enumerate}
		
		\begin{case}{1.3}
			$|S| = 4$.
		\end{case}
		\noindent In this case,~$\Gamma(a)\cap \Gamma_2(u) = S = \Gamma(c)\cap \Gamma_2(u)$, so three vertices of~$S$ are in~$\{v_1, v_2, v_3, v_4\}$ and the other in~$\{v_5, v_6\}$. Without loss of generality, we assume that either~$S = \{v_1, v_3, v_4, v_5\}$ or~$S=\{v_1, v_2, v_3, v_5\}$. These two cases are shown in Figures~\ref{fig:neu:scase1sub3a} and~\ref{fig:neu:scase1sub3b}, respectively.
		
		\begin{figure}[!htb]
			\centering
			\begin{subfigure}[b]{0.19\textwidth}
				\centering
				\begin{adjustbox}{width=0.95\textwidth}
					\begin{tikzpicture}[scale=0.4]
						\foreach \x in {0,60,120,180,240,300}{
							\draw[fill=black] (\x:3) circle (4pt);
						}
						\node at (120:4) {$acd$};
						\node at (60:4){$b$};
						\node at (180:4.5){$abcd$};
						\node at (0:4.4){$abc$};
						\node at (240:4){$abc$};
						\node at (300:4){$d$};
						
						\draw[dashed] (8:2.6) to (0:3);
						\draw[dashed] (232:2.6) to (240:3);
						\draw[dashed] (8:2.6) to (232:2.6);
						\node at (-45:0.2){$d$};
						
						\node[draw=none,fill=none] at(0:5.5){};
						\node[draw=none,fill=none] at(180:5.5){};
					\end{tikzpicture}
				\end{adjustbox}
				\caption{}
				\label{fig:neu:scase1sub3a}
			\end{subfigure}
			\begin{subfigure}[b]{0.19\textwidth}
				\centering
				\begin{adjustbox}{width=0.95\textwidth}
					\begin{tikzpicture}[scale=0.4]
						\foreach \x in {0,60,120,180,240,300}{
							\draw[fill=black] (\x:3) circle (4pt);
						}
						\node at (120:4) {$ac$};
						\node at (60:4){$ac$};
						\node at (180:4.5){$abcd$};
						\node at (0:4.5){};
						\node at (240:4){$abcd$};
						\node at (300:4){};
						
						\draw[dashed] (0:3) to (0:2.4);
						\draw[dashed] (300:2.4) to (300:3);
						\draw[dashed] (0:2.4) to (300:2.4);
						\node at (330:1){$b,d$};
						
						\draw[dashed] (111:4.5) to (110:4.8);
						\draw[dashed] (69:4.5) to (70:4.8);
						\draw[dashed] (110:4.8) to (70:4.8);
						\node at (90:5.2){$b,d$};
						
						\node[draw=none,fill=none] at(0:5.5){};
						\node[draw=none,fill=none] at(180:5.5){};
					\end{tikzpicture}
				\end{adjustbox}
				\caption{}
				\label{fig:neu:scase1sub3b}
			\end{subfigure}
			\caption{The edges between~$\Gamma_2(u)$ and~$\{a, b, c, d\}$ when~$|S| = 4$.}
			\label{fig:neu:scases1sub3}
		\end{figure}
		
		\begin{dense_enumerate}{label=(\alph*)}
			\item First consider~$S = \{v_1, v_3, v_4, v_5\}$. Vertex~$v_2$ cannot have degree six in the subgraph induced on~$\Gamma_2(u)$, so it must have another neighbor, without loss of generality~$b$. Since~$b$ has three common neighbors with~$a$ and~$c$, this implies~$b\sim v_3, v_4, v_5$. By the same argument,~$d\sim v_6$. Then~$d\sim v_1$, and it has two neighbors among~$\{v_3, v_4, v_5\}$, because it must have three common neighbors with~$a$ and~$c$. In particular, it must be adjacent to~$v_3$ or~$v_4$. Without loss of generality, let~$d\sim v_3$. However, then the subgraph induced by~$\Gamma_2(u)$ has two vertices of degree five,~$v_2$ and~$v_6$, which must both be adjacent to~$v_3$, which has degree one. This is clearly not possible. 
			\item If~$S=\{v_1, v_2, v_3, v_5\}$, then~$b$ and~$d$ can only have three common neighbors with~$a$ and~$c$ if~$b,d\sim v_3,v_5$ and both have one neighbor in~$\{v_1, v_2\}$. Also,~$v_6$ is adjacent to at least one of~$b$,~$d$, otherwise it must have degree six in~$\Gamma_2(u)$. If~$b,d\sim v_6$, then~$v_4$ and~$v_6$ have degree five and four in the subgraph induced on~$\Gamma_2(u)$, whereas the reverse is true when only one of them is adjacent to~$v_6$ and the other to~$v_4$. If~$b$ and~$d$ have the same neighbor in~$\{v_1, v_2\}$, this leads to degree sequence~$(1,2,2,4,4,5)$ and otherwise we get~$(1,2,3,3,4,5)$. The first degree sequence cannot exist due to Lemma~\ref{lem:neu:nonexistence}$\ref{lem:neu:nonexistence2}$, the second is ruled out by Lemma~\ref{lem:neu:nonexistence}$\ref{lem:neu:nonexistence3}$.
		\end{dense_enumerate}
		
		\begin{case}{2} The edges between~$B$ and~$\Gamma_2(u)$ are as in Figure~\ref{fig:neu:bowtiecomplete2}.
		\end{case}
		\noindent Recall that in this case,~$v_3, v_4\sim e, f, g, h$, while~$v_1$,~$v_2$,~$v_5$ and~$v_6$ are adjacent to~$x$, as well as to~$e$ and~$g$,~$g$ and~$f$,~$f$ and~$h$ and~$h$ and~$e$, respectively. This implies the following.
		
		Both~$a$ and~$c$ are adjacent to~$e$ as well as to~$g$. To satisfy~$4$-edge-regularity, they both must have three common neighbors with both~$e$ and~$g$ in~$\Gamma_2(u)$, as they only have~$u$ as a common neighbor in~$\{u\}\cup\Gamma(u)$. Therefore,~$a$ and~$c$ must have exactly three neighbors in the sets~$\{v_1, v_2, v_3, v_4\}$ and~$\{v_1, v_3, v_4, v_6\}$. Similarly,~$b$ and~$d$ have three neighbors in~$\{v_3, v_4, v_5, v_6\}$ and in~$\{v_2, v_3, v_4, v_5\}$. Analogous to Case 1, both~$a$ and~$c$ must also have three common neighbors in~$\Gamma_2(u)$ with~$b$ as well as with~$d$.
		
		Note that the above implies that either~$\{v_1, v_3, v_4\}\subset\Gamma(a)$ or~$|\{v_1, v_3, v_4\}\cap\Gamma(a)|=2$. The same is true for~$c$. Therefore, it suffices to consider the following three cases:~$\{v_1, v_3, v_4\}\subset\Gamma(a)\cap \Gamma(c)$,~$\{v_1, v_3, v_4\}\subset\Gamma(a)$ and~$|\{v_1, v_3, v_4\}\cap\Gamma(c)|=2$ (note that we could interchange~$a$ and~$c$ here, since these cases are symmetric) and~$|\{v_1, v_3, v_4\}\cap\Gamma(a)|=|\{v_1, v_3, v_4\}\cap\Gamma(c)|=2$.
		
		\begin{case}{2.1}$\{v_1, v_3, v_4\}\subset\Gamma(a)\cap \Gamma(c)$.
		\end{case}
		\noindent If~$\{v_1, v_3, v_4\}\subset\Gamma(a)\cap \Gamma(c)$,~$a$ and~$c$ cannot be adjacent to~$v_2$ or~$v_6$, otherwise they would have more than three common neighbors with~$g$ or~$e$ in~$\Gamma_2(u)$. Therefore, their final neighbor is~$v_5$. Moreover, both~$b$ and~$d$ cannot be adjacent to~$v_6$. If they are, they must also be adjacent to~$v_2$ to ensure 4-edge-regularity with~$f$, hence they cannot have three common neighbors with~$a$ and~$c$ in~$\Gamma_2(u)$. Then~$v_6$ has degree six in~$\Gamma_2(u)$, which is not possible.
		
		\begin{case}{2.2}$\{v_1, v_3, v_4\}\subset\Gamma(a)$ and~$|\{v_1, v_3, v_4\}\cap\Gamma(c)|=2$.
		\end{case}
		\noindent Vertices~$v_3$ and~$v_4$ have the same set of neighbors in~$B$. Therefore, we may assume without loss of generality that~$|\{v_1, v_3, v_4\}\cap\Gamma(c)|$ is either~$\{v_1, v_3\}$ or~$\{v_3, v_4\}$. Both options are shown in Figure~\ref{fig:neu:scases2sub2}. Note that~$a\sim v_5$, since it already has three neighbors in~$\Gamma_2(u)$ with~$e$ and~$g$. Analogously, also note that ~$c\sim v_2, v_6$.
		
		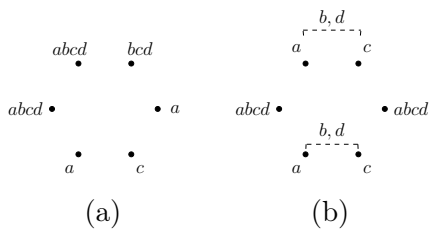
\begin{figure}[!htb]
			\centering
			\begin{subfigure}[b]{0.19\textwidth}
				\centering
				\begin{adjustbox}{width=0.95\textwidth}
					\begin{tikzpicture}[scale=0.4]
						\foreach \x in {0,60,120,180,240,300}{
							\draw[fill=black] (\x:3) circle (4pt);
						}
						\node at (120:4) {$abcd$};
						\node at (60:4){$bcd$};
						\node at (180:4.5){$abcd$};
						\node at (0:4){$a$};
						\node at (240:4){$a$};
						\node at (300:4){$c$};
						
						\node[draw=none,fill=none] at(0:5.5){};
						\node[draw=none,fill=none] at(180:5.5){};
					\end{tikzpicture}
				\end{adjustbox}
				\caption{}
				\label{fig:neu:scase2sub2a}
			\end{subfigure}
			\begin{subfigure}[b]{0.19\textwidth}
				\centering
				\begin{adjustbox}{width=0.95\textwidth}
					\begin{tikzpicture}[scale=0.4]
						\foreach \x in {0,60,120,180,240,300}{
							\draw[fill=black] (\x:3) circle (4pt);
						}
						\node at (120:4) {$a$};
						\node at (60:4){$c$};
						\node at (180:4.5){$abcd$};
						\node at (0:4.5){$abcd$};
						\node at (240:4){$a$};
						\node at (300:4){$c$};
						
						\draw[dashed] (111:4.5) to (110:4.8);
						\draw[dashed] (69:4.5) to (70:4.8);
						\draw[dashed] (110:4.8) to (70:4.8);
						\node at (90:5.2){$b,d$};
						
						\draw[dashed] (234:2.5) to (240:3);
						\draw[dashed] (306:2.5) to (300:3);
						\draw[dashed] (306:2.5) to (234:2.5);
						\node at (270:1.3){$b,d$};
						
						\node[draw=none,fill=none] at(0:5.5){};
						\node[draw=none,fill=none] at(180:5.5){};
					\end{tikzpicture}
				\end{adjustbox}
				\caption{}
				\label{fig:neu:scase2sub2b}
			\end{subfigure}
			\caption{The edges between~$\Gamma_2(u)$ and~$\{a, b, c, d\}$ when~$\{v_1, v_3, v_4\}\subset\Gamma(a)$ and $|\{v_1, v_3, v_4\}\cap\Gamma(c)|=2$.}
			\label{fig:neu:scases2sub2}
		\end{figure}
		
		\begin{dense_enumerate}{label=(\alph*)}
			\item Suppose that~$|\{v_1, v_3, v_4\}\cap\Gamma(c)| =  \{v_1, v_3\}$. Since~$b$ and~$d$ have three common neighbors with~$a$ and~$c$ in~$\G_2(u)$, they must be adjacent to both~$v_1$ and~$v_3$. They are also adjacent to~$v_2$, because they have three neighbors in~$\{v_2, v_3, v_4, v_5\}$. However, then they cannot have three neighbors in~$\{v_3, v_4, v_5,v_6\}$, a contradiction.
			\item If~$|\{v_1, v_3, v_4\}\cap\Gamma(c)| = \{v_3, v_4\}$, then we also know that~$c\sim v_2, v_6$. Analogous to Case 2.2(a), we have~$b,d\sim v_3, v_4$, so~$v_3$ and~$v_4$ have degree one in the subgraph induced by~$\Gamma_2(u)$. However, the other four vertices of this subgraph can span at most six edges. Hence~$\Gamma_2(u)$ has at most eight edges, while it should have nine, a contradiction.
		\end{dense_enumerate}
		
		\begin{case}{2.3}$|\{v_1, v_3, v_4\}\cap\Gamma(a)|=|\{v_1, v_3, v_4\}\cap\Gamma(c)|=2$.
		\end{case}
		\noindent In this case,~$e$ and~$g$ must have two common neighbors with~$a$ and~$c$ outside of~$\{v_1, v_3, v_4\}$, so we know that~$a,c\sim v_2, v_6$. Since neither~$a$ nor~$c$ is adjacent to~$v_{5}$, and~$v_{5}$ cannot have degree six in~$\Gamma_2(u)$, we may assume without loss of generality that~$b\sim v_5$. Within~$\{v_1, v_3, v_4\}$,~$a$ and~$c$ have one or two common neighbors. If they have one common neighbor in~$\{v_1, v_3, v_4\}$, then this is, up to symmetry, either~$v_1$ or~$v_3$. If~$a$ and~$c$ have two common neighbors in~$\{v_1, v_3, v_4\}$, then these are, up to symmetry, either~$v_1$ and~$v_3$ or~$v_3$ and~$v_4$. Figure~\ref{fig:neu:scases2sub3} illustrates all options.
		
		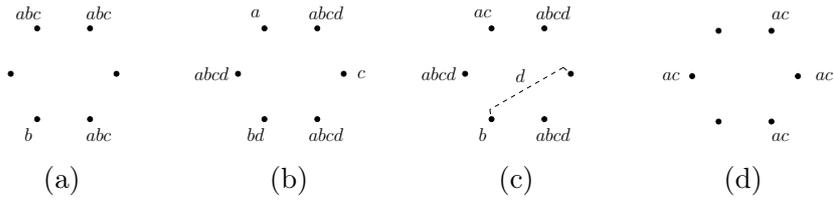
\begin{figure}[!htb]
			\centering
			\begin{subfigure}[b]{0.19\textwidth}
				\centering
				\begin{adjustbox}{width=0.95\textwidth}
					\begin{tikzpicture}[scale=0.4]
						\foreach \x in {0,60,120,180,240,300}{
							\draw[fill=black] (\x:3) circle (4pt);
						}
						\node at (120:4) {$abc$};
						\node at (60:4){$abc$};
						\node at (180:4.5){};
						\node at (0:4.5){};
						\node at (240:4){$b$};
						\node at (300:4){$abc$};
						
						\node[draw=none,fill=none] at(0:5.5){};
						\node[draw=none,fill=none] at(180:5.5){};
					\end{tikzpicture}
				\end{adjustbox}
				\caption{}
				\label{fig:neu:scase2sub3a}
			\end{subfigure}
			\begin{subfigure}[b]{0.19\textwidth}
				\centering
				\begin{adjustbox}{width=0.95\textwidth}
					\begin{tikzpicture}[scale=0.4]
						\foreach \x in {0,60,120,180,240,300}{
							\draw[fill=black] (\x:3) circle (4pt);
						}
						\node at (120:4) {$a$};
						\node at (60:4){$abcd$};
						\node at (180:4.5){$abcd$};
						\node at (0:4){$c$};
						\node at (240:4){$bd$};
						\node at (300:4){$abcd$};
						
						\node[draw=none,fill=none] at(0:5.5){};
						\node[draw=none,fill=none] at(180:5.5){};
					\end{tikzpicture}
				\end{adjustbox}
				\caption{}
				\label{fig:neu:scase2sub3b}
			\end{subfigure}
			\begin{subfigure}[b]{0.19\textwidth}
				\centering
				\begin{adjustbox}{width=0.95\textwidth}
					\begin{tikzpicture}[scale=0.4]
						\foreach \x in {0,60,120,180,240,300}{
							\draw[fill=black] (\x:3) circle (4pt);
						}
						\node at (120:4) {$ac$};
						\node at (60:4){$abcd$};
						\node at (180:4.5){$abcd$};
						\node at (0:4.4){};
						\node at (240:4){$b$};
						\node at (300:4){$abcd$};
						
						\draw[dashed] (8:2.6) to (0:3);
						\draw[dashed] (232:2.6) to (240:3);
						\draw[dashed] (8:2.6) to (232:2.6);
						\node at (-45:0.2){$d$};
						
						\node[draw=none,fill=none] at(0:5.5){};
						\node[draw=none,fill=none] at(180:5.5){};
					\end{tikzpicture}
				\end{adjustbox}
				\caption{}
				\label{fig:neu:scase2sub3c}
			\end{subfigure}
			\begin{subfigure}[b]{0.19\textwidth}
				\centering
				\begin{adjustbox}{width=0.95\textwidth}
					\begin{tikzpicture}[scale=0.4]
						\foreach \x in {0,60,120,180,240,300}{
							\draw[fill=black] (\x:3) circle (4pt);
						}
						\node at (120:4) {};
						\node at (60:4){$ac$};
						\node at (180:4.2){$ac$};
						\node at (0:4.5){$ac$};
						\node at (240:4){};
						\node at (300:4){$ac$};
						
						\node[draw=none,fill=none] at(0:5.5){};
						\node[draw=none,fill=none] at(180:5.5){};
					\end{tikzpicture}
				\end{adjustbox}
				\caption{}
				\label{fig:neu:scase2sub3d}
			\end{subfigure}
			\caption{The edges between~$\Gamma_2(u)$ and~$\{a, b, c, d\}$ when~$|\{v_1, v_3, v_4\}\cap\Gamma(a)|=|\{v_1, v_3, v_4\}\cap\Gamma(c)|=2$.}
			\label{fig:neu:scases2sub3}
		\end{figure}
		
		\begin{dense_enumerate}{label=(\alph*)}
			\item If~$a,c\sim v_1$ and~$v_1$ is the only common neighbor of~$a$ and~$c$ in~$\{v_1, v_3, v_4\}$, then~$b$ must be adjacent to~$v_1$,~$v_2$ and~$v_6$ in order to have three common neighbors with~$a$ and~$c$ in~$\Gamma_2(u)$. This is a contradiction, since we know it has three neighbors in~$\{v_3, v_4, v_5, v_6\}$.
			\item If~$a,c\sim v_3$ and~$v_3$ is the only common neighbor of~$a$ and~$c$ in~$\{v_1, v_3, v_4\}$, then we may assume without loss of generality that~$a\sim v_1$ and~$c\sim v_4$. Then~$b\sim v_2, v_3, v_6$, as~$b$ has three common neighbors with~$a$,~$c$,~$f$ and~$h$ in~$\Gamma_2(u)$. If~$d\sim v_5$, it has the same neighbors as~$b$ and the subgraph induced by~$\Gamma_2(u)$ has degree sequence~$(1,2,2,4,4,5)$. This contradicts Lemma~\ref{lem:neu:nonexistence}$\ref{lem:neu:nonexistence2}$. Otherwise, the only option which ensures 4-edge-regularity with~$f$ is~$d\sim v_2, v_3, v_4, v_6$. However, in this case~$c$ and~$d$ have five common neighbors, a contradiction.
			\item If~$a,c\sim v_1,v_2,v_3,v_6$, then~$b\sim v_2, v_3, v_6$, as~$b$ has three common neighbors with~$a$,~$c$,~$f$ and~$h$ in~$\Gamma_2(u)$. If~$d\sim v_5$, it has the same neighbors as~$b$. Otherwise, the only option is~$d\sim v_2, v_3, v_4, v_6$. In both cases,~$\Gamma_2(u)$ has degree sequence~$(1,2,2,4,4,5)$, contradicting Lemma~\ref{lem:neu:nonexistence}$\ref{lem:neu:nonexistence2}$.
			\item Let~$a, c\sim v_2, v_3, v_4, v_6$. If~$b\sim v_1$, it must also be adjacent to~$v_3$ and~$v_4$ to ensure 4-edge-regularity with~$f$ and~$h$. This cannot be the case, because then~$b$ does not have three common neighbors with~$a$ and~$c$ in~$\Gamma_2(u)$. The same holds for~$d$. However, if~$b,d\not\sim v_1$, then~$v_1$ must have degree six in~$\Gamma_2(u)$, a contradiction. \qedhere
		\end{dense_enumerate}
	\end{proof}
	
	\section{A new strictly Neumaier graph on 25 vertices}
 \label{sec:neu:new25}
	
	In this section, we construct a strictly Neumaier graph with parameters~$(25,12,5;\allowbreak 2,5)$ from a Latin square of order five. At the time of writing, this appears to be the first strictly Neumaier graph with odd order and~$e>1$, and also the first one\footnote{No other examples have been published to our knowledge, but when finishing this paper we were informed by Sergey Goryainov that he constructed 18 strictly Neumaier graphs with parameters $(64,28,12;3,8)$ and one strictly Neumaier graph with parameters $(64,42,26;5,8)$, alongside some strictly Neumaier graphs with parameters $(64,35,18;4,8)$ \cite{goryainovcommunication}.} with $1\neq e\neq \frac{s}{2}$. It is also the first (and so far only, to our knowledge) strictly Neumaier graph with $v$ odd, that is not vertex-transitive; several non-vertex-transitive strictly Neumaier graphs with $v$ even were constructed in \cite[Section 4.4.2]{evansphd}. Moreover, the parameter set~$(25,12,5;2,5)$ is the only one for which at this moment only a non-vertex-transitive example is known: for all other parameter sets for which existence is established, a vertex-transitive example is known. Our construction requires the following preliminary definitions.
	
	A \textit{Latin square} of order~$n$ is an~$n\times n$ grid where every cell has a label from the set~$\{1,\dots,n\}$, often referred to as its color, such that every value appears exactly once in every row and column. With a Latin square, one can associate a graph on~$n^2$ vertices, corresponding to the cells, which are adjacent if they share a row, column or color. These strongly regular graphs are referred to as \textit{Latin square graphs} and are Neumaier with parameters~$(n^2,3(n-1),n;2,n)$ for any given Latin square. In the remainder of this section, we label the cell of a Latin square in row~$i$ and column~$j$, as well as the corresponding vertex in the Latin square graph, as~$x_{ij}$.
    \par Let~$L$ be the Latin square of order 5 shown in Figure~\ref{fig:neu:LS25}. Note that~$L$ has a cell~$x_{11}$ of color 1, which is contained in four $2\times 2$ Latin subsquares, one for each other color class.

\begin{figure}[!htb]
 \centering
	   \begin{adjustbox}{width=0.3\textwidth}
        \begin{tikzpicture}
        \matrix[matrix of nodes,nodes={inner sep=0pt,text width=.5cm,align=center,minimum height=.4915cm}]{
            |[fill=black!40]|1&|[fill=red!40]|2&|[fill=darkorange!40]|3&|[fill=LimeGreen!40]|4&|[fill=cyan!40]|5 \\
            |[fill=red!40]|2&|[fill=red!40]|1&4&5&3 \\
            |[fill=darkorange!40]|3&5&|[fill=darkorange!40]|1&2&4 \\
            |[fill=LimeGreen!40]|4&3&5&|[fill=LimeGreen!40]|1&2 \\
            |[fill=cyan!40]|5&4&2&3&|[fill=cyan!40]|1 \\};
            \draw[step=0.5cm,color=black,xshift=-0.25cm, yshift=-0.25cm] (-1,-1) grid (1.5,1.5);
        \end{tikzpicture}
      \end{adjustbox}
	   \caption{A Latin square of order five. Cell~$x_{11}$ is contained in four~$2\times 2$ Latin subsquares, each indicated with a different color.}
		\label{fig:neu:LS25}
	\end{figure}
 
    We define the following $3\times3$ matrices:

\begin{align*}
    I&=\begin{pmatrix}
        1&0&0\\
        0&1&0\\
        0&0&1
    \end{pmatrix}&
    A&=\begin{pmatrix}
        0&1&0\\
        0&0&1\\
        1&0&0
    \end{pmatrix}&
    B&=\begin{pmatrix}
        0&1&1\\
        1&0&1\\
        1&1&0
    \end{pmatrix}=A+A^{t}\\
    E_1&=\begin{pmatrix}
        1&1&1\\
        0&0&0\\
        0&0&0
    \end{pmatrix}&
    E_2&=\begin{pmatrix}
        0&0&0\\
        1&1&1\\
        0&0&0
    \end{pmatrix}&
    E_3&=\begin{pmatrix}
        0&0&0\\
        0&0&0\\
        1&1&1
    \end{pmatrix}\\
    \overline{E_1}&=\begin{pmatrix}
        0&0&0\\
        1&1&1\\
        1&1&1
    \end{pmatrix}&
    \overline{E_2}&=\begin{pmatrix}
        1&1&1\\
        0&0&0\\
        1&1&1
    \end{pmatrix}&
    \overline{E_3}&=\begin{pmatrix}
        1&1&1\\
        1&1&1\\
        0&0&0
    \end{pmatrix}
\end{align*}
and let $O$ be the $3\times3$ zero matrix. Let $\bm{j}$ and $\bm{0}$ be the all-one and all-zero row vector of length 3, respectively. We can represent the Latin square graph~$\G^{(L)}$ corresponding to~$L$ by the symmetric block matrix~$M$ defined as

\begin{align*}
    M= \begin{pmatrix}
        0 & \bm{j} & \bm{j} & \bm{j} & \bm{j} & \bm{0} & \bm{0} & \bm{0} & \bm{0}\\
         \bm{j}^t& B & E_1 & E_2 & E_3 & \overline{E_1} & \overline{E_2} & \overline{E_3} & O\\
         \bm{j}^t& E_1^t & B & I & I & I+A& A & I & B\\
         \bm{j}^t& E_2^t & I & B & I & I & I+A & A & I+A^{t}\\
         \bm{j}^t& E_3^t & I & I & B & A & I & I+A & I+A\\
         \bm{0}^t& \overline{E_1}^t & I+A^t & I & A^t & B & A & A^{t} & I+A^{t}\\
         \bm{0}^t& \overline{E_2}^t & A^t & I+A^t & I & A^t & B & A & I+A\\
         \bm{0}^t& \overline{E_3}^t & I & A^t & I+A^t & A & A^t & B & B\\
         \bm{0}^t& O & B & I+A & I+A^t& I+A& I+A^t & B & O
    \end{pmatrix}.
\end{align*}
 The first row of~$M$ corresponds to~$x_{11}$. This representation is obtained as follows. 

 Let~$L'$ correspond to one of the $2\times 2$ Latin subsquares of~$L$ containing~$x_{11}$. We will choose~$L'=\{x_{11},x_{12},x_{21},x_{22}\}$, but note that all four choices are equivalent under the automorphism group of~$L$. Let~$C_1,C_2,C_3 \subset V(\G^{(L)})$ be the three~$5$-cliques containing~$x_{11}$. The second block row/column of~$M$ then corresponds to~$L'\setminus \{x_{11}\}$ and the third, fourth and fifth block row/column of~$M$ correspond to~$C_1\setminus L'$,~$C_2\setminus L'$ and~$C_3\setminus L'$. Let~$C'_1,C'_2,C'_3 \subset V(\G^{(L)})$ be the three~$5$-cliques containing two vertices of $L'$ but not~$x_{11}$, and such that $C_i$ and $C'_i$ have no vertices in common. The next three block rows/columns correspond to~$C'_1\setminus L'$,~$C'_2\setminus L'$ and~$C'_3\setminus L'$. The last block row/column of~$M$ then corresponds to the remaining three vertices.
 
 Starting from the graph~$\G^{(L)}$  we can construct a strictly Neumaier graph~$\G^{(25)}$. Note that the subgraph induced on the nine vertices of $(C'_1\cup C'_2\cup C'_3)\setminus L'$ is a $3\times3$ rook's graph, as given in Figure \ref{fig:neu:switching}\subref{fig:neu:preswitch}. The graph~$\G^{(25)}$ is obtained from~$\G^{(L)}$ by removing the edges from three disjoint triangles, namely~$\{x_{24},x_{32},x_{34}\}$, $\{x_{25},x_{42},x_{45}\}$ and $\{x_{23},x_{52},x_{53}\}$, and adding nine edges, thereby creating the disjoint triangles~$\{x_{24},\allowbreak \underline{} x_{42},x_{45}\}$, $\{x_{25},x_{32},x_{53}\}$ and $\{x_{23},x_{42},x_{34}\}$. So, the induced subgraph on $(C'_1\cup C'_2\cup C'_3)\setminus L'$ from Figure \ref{fig:neu:switching}\subref{fig:neu:preswitch} is replaced by the one from Figure \ref{fig:neu:switching}\subref{fig:neu:afterswitch}.

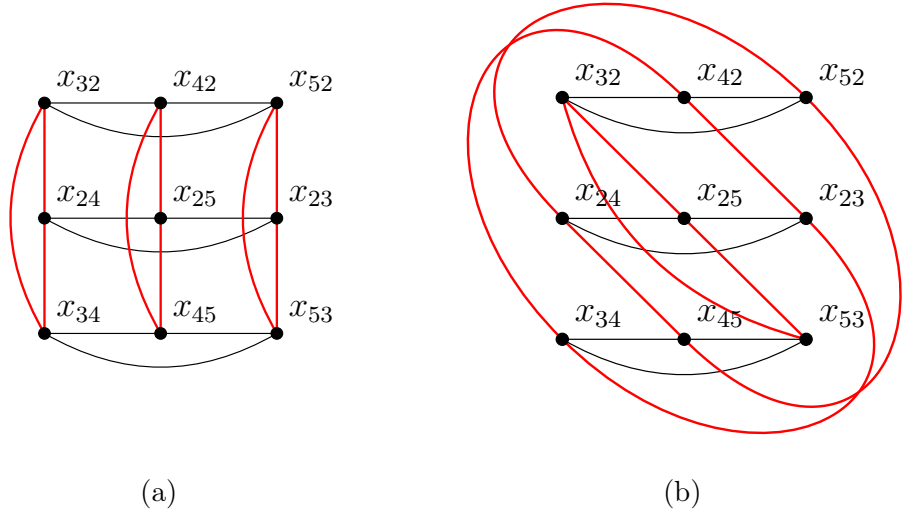
\begin{figure}[htb!]
    \centering
    \begin{subfigure}[b]{0.45\textwidth}
    	\centering
    	\begin{adjustbox}{width=0.95\textwidth}
            \begin{tikzpicture}[scale=0.7]
                \draw (0,0) -- (4,0);
                \draw (0,2) -- (4,2);
                \draw (0,4) -- (4,4);
                \draw (0,0) to [out=330,in=210] (4,0);
                \draw (0,2) to [out=330,in=210] (4,2);
                \draw (0,4) to [out=330,in=210] (4,4);
    
            	\draw[thick,red] (0,0) -- (0,4);
            	\draw[thick,red] (2,0) -- (2,4);
            	\draw[thick,red] (4,0) -- (4,4);
            	\draw[thick,red] (0,0) to [out=120,in=240] (0,4);
            	\draw[thick,red] (2,0) to [out=120,in=240] (2,4);
            	\draw[thick,red] (4,0) to [out=120,in=240] (4,4);
             
                \draw [fill] (0,0) circle [radius=0.1];
                \draw [fill] (0,2) circle [radius=0.1];
                \draw[fill] (0,4) circle [radius=0.1];
                \draw [fill] (2,0) circle [radius=0.1];
                \draw [fill] (2,2) circle [radius=0.1];
                \draw[fill] (2,4) circle [radius=0.1];
                \draw [fill] (4,0) circle [radius=0.1];
                \draw [fill] (4,2) circle [radius=0.1];
                \draw[fill] (4,4) circle [radius=0.1];
                \node[above right] at (0,4) {$x_{32}$};
                \node[above right] at (2,4) {$x_{42}$};
                \node[above right] at (4,4) {$x_{52}$};
                \node[above right] at (0,2) {$x_{24}$};
                \node[above right] at (2,2) {$x_{25}$};
                \node[above right] at (4,2) {$x_{23}$};
                \node[above right] at (0,0) {$x_{34}$};
                \node[above right] at (2,0) {$x_{45}$};
                \node[above right] at (4,0) {$x_{53}$};
                \node[draw=none,fill=none] at(-2,-2){};
                \node[draw=none,fill=none] at(6,6){};
            \end{tikzpicture}
        \end{adjustbox}
    	\caption{}
    	\label{fig:neu:preswitch}
    \end{subfigure}
    \begin{subfigure}[b]{0.45\textwidth}
		\centering
    		\begin{adjustbox}{width=0.95\textwidth}
        		\begin{tikzpicture}[scale=0.7]
                \clip (-2,-2) rectangle (6,6);
                \draw (0,0) -- (4,0);
                \draw (0,2) -- (4,2);
                \draw (0,4) -- (4,4);
                \draw (0,0) to [out=330,in=210] (4,0);
                \draw (0,2) to [out=330,in=210] (4,2);
                \draw (0,4) to [out=330,in=210] (4,4);
        	    \draw[red,thick] (4,0) -- (0,4);
        	    \draw[red,thick] (4,2) -- (2,4);
        	      \draw[red,thick] (0,2) -- (2,0);
        	    \draw[thick,red] (4,0) to [out=165,in=285] (0,4);
                \draw[thick,red] (0,0) to [out=135,in=135,looseness=2.5] (2,4);
                \draw[thick,red] (0,0) to [out=-45,in=-45,looseness=2.5] (4,2);
                \draw[thick,red] (4,4) to [out=135,in=135,looseness=2.5] (0,2);
                \draw[thick,red] (4,4) to [out=-45,in=-45,looseness=2.5] (2,0);
                \draw [fill] (0,0) circle [radius=0.1];
                \draw [fill] (0,2) circle [radius=0.1];
                \draw[fill] (0,4) circle [radius=0.1];
                \draw [fill] (2,0) circle [radius=0.1];
                \draw [fill] (2,2) circle [radius=0.1];
                \draw[fill] (2,4) circle [radius=0.1];
                \draw [fill] (4,0) circle [radius=0.1];
                \draw [fill] (4,2) circle [radius=0.1];
                \draw[fill] (4,4) circle [radius=0.1];
                \node[above right] at (0,4) {$x_{32}$};
                \node[above right] at (2,4) {$x_{42}$};
                \node[above right] at (4,4) {$x_{52}$};
                \node[above right] at (0,2) {$x_{24}$};
                \node[above right] at (2,2) {$x_{25}$};
                \node[above right] at (4,2) {$x_{23}$};
                \node[above right] at (0,0) {$x_{34}$};
                \node[above right] at (2,0) {$x_{45}$};
                \node[above right] at (4,0) {$x_{53}$};
                \node[draw=none,fill=none] at(-2,-2){};
                \node[draw=none,fill=none] at(6,6){};
            \end{tikzpicture}
    	\end{adjustbox}
		\caption{}
		\label{fig:neu:afterswitch}
	\end{subfigure}
    \caption{Replacing the red edges on the left by those on the right, transforms~$\G^{(L)}$ into the Neumaier graph~$\G^{(25)}$.}
    \label{fig:neu:switching}
 \end{figure}
 
 The adjacency matrix of the new strictly Neumaier graph~$\G^{(25)}$ is a symmetric block matrix $M'$ given by
\begin{align*}
    M'= \begin{pNiceMatrix}
        0 & \bm{j} & \bm{j} & \bm{j} & \bm{j} & \bm{0} & \bm{0} & \bm{0} & \bm{0}\\
        \bm{j}^t & B & E_1 & E_2 & E_3 & \overline{E_1} & \overline{E_2} & \overline{E_3} & O\\
         \bm{j}^t & E_1^t & B & I & I & I+A& A & I & B\\
         \bm{j}^t & E_2^t & I & B & I & I & I+A & A & I+A^{t}\\
         \bm{j}^t & E_3^t & I & I & B & A & I & I+A & I+A\\
         \bm{0}^t & \overline{E_1}^t & I+A^t & I & A^t & B & \Block[fill=black!15,rounded-corners]{1-2}{}I & \Block[fill=black!15,rounded-corners]{2-1}{} I & I+A^{t}\\
        \bm{0}^t & \overline{E_2}^t & A^t & I+A^t & I& \Block[fill=black!15,rounded-corners]{2-1}{}I & B & I & I+A\\
        \bm{0}^t &\overline{E_3}^t & I & A^t & I+A^t & \Block[fill=black!15,rounded-corners]{1-2}{}I & I & B & B\\
        \bm{0}^t & O & B & I+A & I+A^t & I+A & I+A^t & B & O
    \end{pNiceMatrix}.
\end{align*}

 \noindent It is obtained from~$M$ by replacing the highlighted blocks with identity matrices. Although only nine edges are switched, nine 5-cliques in~$\G^{(L)}$ are destroyed by the switching, leaving only six in~$\G^{(25)}$. This matrix representation allows for a direct check that~$\G^{(25)}$ is indeed strictly Neumaier.
 
 The new strictly Neumaier graph can be obtained in Sage using graph6 identifier \[\text{`X|\}UJk]J\}I?tYBKNrZ|GqQhebAybQwwh\}KiWWdUHiKJySWvQ\string^?\}'}. \] 
 The automorphism group~$\Aut(\G^{(25)})$ has order eighteen and is structurally isomorphic to~$(C_3\times C_3)\rtimes C_2$. It has five orbits on the vertices:~$\{x_{11}\}$, $L'\setminus\{x_{11}\}$, $(C_1\cup C_2\cup C_3)\setminus L'$, $(C'_1\cup C'_2\cup C'_3)\setminus L'$ and $\{x_{35},x_{43},x_{54}\}$. The group $\Aut(\G^{(25)})$ is a subgroup of~$\Aut(\G^{(L)})$, which has size 72, is structurally isomorphic to $(C_3\times A_4)\rtimes C_2$ and which has only three orbits on the vertices.
 
	\section{A new approach for nonexistence}
	\label{sec:neu:nonexistence}
	In this section we propose a novel approach, which makes uses of integer linear programming (ILP), to show that there can be no strictly Neumaier graph with parameters~$(25,16,9;3,5)$, $(28,18,11;4,7)$, $(33,24,17;6,9)$, $(35,22,12;3,5)$, $(40,30,22;7,10)$ or~$(55,34,18;3,5)$. 	
	\subsection{Integer linear programming and the graph complement}\label{sec:ip}
	The parameter sets that we will discard using the ILP model all have high degree, so it makes sense to instead work with the complement graph. Note that computationally this does not make a difference, as this is a matter of choosing edges versus non-edges. However, to speed up the search, we will make some theoretical observations, which are easier to derive on the complement graphs since they are sparser.
	
	Suppose that~$\Gamma$ is a graph such that~$\overline{\Gamma}$ is strictly Neumaier with parameters~$(v,k,\allowbreak \lambda; e,s)$. Then~$\Gamma$ is $k'$-regular, $\mu'$-co-edge-regular and contains a $e'$-regular coclique~$C$ of size $s$, where $k'= v-k-1$, $\mu'= v-2k+\lambda$ and~$e'=s-e$ respectively. Since the complement~$\overline{\Gamma}$ is strictly Neumaier,~$\Gamma$ cannot be $\lambda'$-edge-regular, with~$\lambda'= v-2-2k+\frac{k(k-\lambda-1)}{v-k-1}$; note that $\Gamma$ cannot be $\ell$-edge-regular for any $\ell\neq\lambda'$.
	
	Using the parameters of this complement graph, we will convert the problem of finding a strictly Neumaier graph to an integer linear program. As we are only interested in the existence of a feasible solution, there is no objective function. The full formulation of the ILP is shown in Figure~\ref{fig:neu:ILP}. We give a more detailed explanation of each of the variables and constraints below. 

 Assume without loss of generality that the vertices are labeled~$1,2,\dots,v$ and that~$[s]:=\{1,2,\dots,s\}$ is the~$e'$-regular coclique. Every unordered pair of vertices~$\{u,w\}$ is associated with a binary variable~$x_{uw}$, which symbolizes whether edge~$\{u,w\}$ is in the graph. In addition, there is a variable~$y_{tuw}$ for every pair~$\{u,w\}$ and vertex~$t$, which equals one whenever the edges~$\{t,u\}$ and~$\{t,w\}$ exist, i.e., if~$t$ is a common neighbor of~$u$ and~$w$. Constraints~\ref{fig:neu:ILP}(\ref{con:neu:y1})-(\ref{con:neu:y3}) ensure that~$y_{tuw}$ is consistent with the values of~$x_{tu}$ and~$x_{tw}$.
 
The constructed graph should be~$k'$-regular, hence the number of edges containing a given vertex must equal~$k'$ (constraint~\ref{fig:neu:ILP}(\ref{con:neu:degree})).
 Constraints~\ref{fig:neu:ILP}(\ref{con:neu:cer1})-(\ref{con:neu:cer2}) force the solution to be co-edge-regular by ensuring that whenever vertices~$u,w$ are not adjacent, their number of common neighbors is exactly~$\mu'$. Note that if~$x_{uw}=1$, the number of common neighbors is bounded between~$0$ and~$v$ by these constraints, so if~$v\sim w$ there is no restriction.
 The regular coclique is induced by constraints~\ref{fig:neu:ILP}(\ref{con:neu:coclique1}) and~\ref{fig:neu:ILP}(\ref{con:neu:coclique2}), which prevent edges between coclique vertices and restrict the number of neighbors of each vertex~$u\in [v]\setminus [s]$ into~$[s]$ respectively. 
 
 As we are considering strictly Neumaier graphs, the output graph should not be~$\lambda'$-edge-regular, so there exists a pair of adjacent vertices~$\{p,q\}$ that do not have exactly~$\lambda'$-common neighbors. Without loss of generality, we may assume that this pair is either~$\{s,s+1\}$ or~$\{s+1,s+2\}$. We therefore solve the ILP four times: for each pair, once with a constraint forcing the pair to have more than~$\lambda'$ common neighbors (constraint \ref{fig:neu:ILP}(\ref{con:neu:er2})), and once such that it has fewer (constraint \ref{fig:neu:ILP}(\ref{con:neu:er3})).

For some parameter sets considered in this article, we will be able to prove that certain edges must exist in the complement of a strictly Neumaier graph. In this case, we can add the set~$S$ of these edges as an input to the ILP and add constraints to ensure that these edges exist in any solution. If we do so, we might need to run the ILP more than four times to ensure that the graph is not~$\lambda'$-edge-regular, because the automorphism group of the subgraph~$\Gamma'= (V,S)$ may induce more than two orbits of potentially adjacent vertex pairs. 
	
	\begin{figure}
		\centering
		\boxed{
        {\footnotesize
			{\setstretch{1.2}
				\begin{tabular}{llll}
					{\tt input:} & \text{parameters } $v$,$k'$,$\lambda'$,$\mu'$,$e'$,$s$ && \\ & pair $\{p,q\}\in \binom{[v]}{2}$ &&\\
                    &set of fixed edges $S\subset \binom{[v]}{2}$&&\\
					{\tt variables:}&$x_{uw}\in \{0,1\}$& $\{u,w\}\in \binom{[v]}{2}$&\\
					&$y_{tuw}\in \{0,1\}$& $\{u,w\}\in \binom{[v]}{2},\ t\in [v]\setminus \{u,w\}$&\\
					{\tt constraints:} & degree:&&\\
					& $\sum_{w\in [v]\setminus u} x_{uw}  = k'$ & $u \in [v]$ &\constraintlabel{con:neu:degree}\\
					&&&\\
					&setting $y$: &&\\
					&$y_{tuw} \ge x_{tu} + x_{tw} - 1$ & $\{u,w\}\in \binom{[v]}{2},\ t\in [v]\setminus \{u,w\}$&\constraintlabel{con:neu:y1}\\
					&$y_{tuw} \le x_{tu}$ & $\{u,w\}\in \binom{[v]}{2},\ t\in [v]\setminus \{u,w\}$&\constraintlabel{con:neu:y2}\\
					&$y_{tuw} \le x_{tw}$ & $\{u,w\}\in \binom{[v]}{2},\ t\in [v]\setminus \{u,w\}$&\constraintlabel{con:neu:y3}\\
					&&&\\
					&co-edge-regularity:&&\\
					&$\sum_{t\in [v]\setminus\{u,w\}} y_{tuw}\ge \mu'(1-x_{uw})$& $\{u,w\}\in \binom{[v]}{2}$&\constraintlabel{con:neu:cer1}\\
					&$\sum_{t\in [v]\setminus\{u,w\}} y_{tuw}\le \mu' + (v-\mu') x_{uw}$& $\{u,w\}\in \binom{[v]}{2}$&\constraintlabel{con:neu:cer2}\\
					&&&\\
					&regular coclique:&&\\
					&$x_{uw} = 0$& $\{u,w\}\in \binom{[s]}{2}$&\constraintlabel{con:neu:coclique1}\\
					&$\sum_{w\in [s]}x_{uw} = e'$& $u\in [v]\setminus [s]$&\constraintlabel{con:neu:coclique2}\\
					&&&\\
					&fixed edges:&&\\
					&$x_{uw} = 1$& $\{u,w\}\in S$&\constraintlabel{con:neu:edges}\\
					&&&\\
					&no edge-regularity:&&\\
					&$x_{pq} = 1$&&\constraintlabel{con:neu:er1}\\
					&$\sum_{t\in [v]\setminus\{p,q\}} y_{tpq} \ge \lambda'+1$\ \ \  \text{or}& &\constraintlabel{con:neu:er2}\\
					&$\sum_{t\in [v]\setminus\{p,q\}} y_{tpq} \le \lambda'-1$&&\constraintlabel{con:neu:er3}\\
		\end{tabular}}}}
		\caption{Integer linear program that determines the existence of the complement of a strictly Neumaier graph with parameters~$(v,k,\lambda;e,s)$, i.e., a $k'$-regular, $\mu'$-co-edge-regular graph with a $e'$-regular coclique of size $s$. The given pair of vertices~$\{p,q\}$ is used to prevent~$\lambda'$-edge-regularity by setting its number of common neighbors strictly smaller or larger than~$\lambda'$.}
		\label{fig:neu:ILP}
	\end{figure}

	\subsection{Nonexistence results of small strictly Neumaier graphs}
	Using the integer program from Section \ref{sec:ip}, we show that there can be no strictly Neumaier graph with parameters $(25,16,9;3,5)$, $(28,18,11;4,7)$, $(33,24,17;\allowbreak 6,9)$, $(35,22,12;3,5)$, $(40,30,22;7,10)$ and $(55,34,18;3,5)$. We run the program using the Gurobi solver~\cite{gurobi} in Python on a single core of an Intel Xeon Platinum 8260 CPU (2.4 GHz) using 16 threads. To speed up the computations, we derive additional theoretical and computer-aided constraints regarding the structure of a potential strictly Neumaier graph for each specific parameter set. 
	
	\begin{case}{1}$(25,16,9;3,5)$.
	\end{case}
	\noindent Suppose that~$\G$ is a strictly Neumaier graph with parameters $(25,16,9;3,5)$. We will use the following computer-aided result on the intersections of its regular cliques.
	
	\begin{lemma}\label{lem:neu:2512_int3}
		Suppose that~$\Gamma$ is a Neumaier graph with parameters~$(25,16,9;3,5)$ and a pair of~$5$-cliques~$S_1$ and~$S_2$ such that~$|S_1\cap S_2| = 3$. Then~$\Gamma$ is strongly regular.
	\end{lemma}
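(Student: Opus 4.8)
The plan is to fix such a pair of $5$-cliques $S_1,S_2$, use the edge-regularity of $\Gamma$ together with the $3$-regularity of all $5$-cliques (Theorem~\ref{th:maxcliques}) to force essentially the whole local structure around $S_1\cup S_2$, reduce the problem to completing a small subgraph, and then finish with a computer search verifying that every completion is strongly regular.

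Write $T:=S_1\cap S_2$ (three vertices), $S_1=T\cup\{a_1,a_2\}$ and $S_2=T\cup\{b_1,b_2\}$; since $a_i\notin S_2$ and $b_j\notin S_1$ the two added pairs are disjoint, so $|S_1\cup S_2|=7$. As the clique $S_1$ is $3$-regular, each $b_j$ has exactly three neighbours in $S_1$, necessarily the three vertices of $T$; hence $a_i\not\sim b_j$ for all $i,j$, and the induced subgraph on $S_1\cup S_2$ is exactly two copies of $K_5$ glued along $T$. Counting neighbours inside $S_1\cup S_2$ shows that, in $R:=V(\Gamma)\setminus(S_1\cup S_2)$ (so $|R|=18$), every $t\in T$ has $10$ neighbours in $R$ and every $a_i,b_j$ has $12$. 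Classify $r\in R$ by $n(r):=|\Gamma(r)\cap T|$: since $r\notin S_1$ it has three neighbours in $S_1=T\cup\{a_1,a_2\}$, so $|\Gamma(r)\cap\{a_1,a_2\}|=3-n(r)$ and symmetrically $|\Gamma(r)\cap\{b_1,b_2\}|=3-n(r)$, forcing $n(r)\in\{1,2,3\}$; a double count of the edges between $R$ and $T$ gives $|R_1|+2|R_2|+3|R_3|=30$ with $|R_1|+|R_2|+|R_3|=18$, where $R_i:=\{r\in R:n(r)=i\}$. Finally, the $\lambda=9$ common neighbours of the edge $a_1a_2$ consist of $T$ together with exactly six vertices of $R$, and a vertex of $R$ is a common neighbour of $a_1$ and $a_2$ precisely when it lies in $R_1$; hence $|R_1|=6$, which together with the two counting relations yields $R_3=\emptyset$ and $|R_2|=12$. (Conceptually, $\chi_{S_1}-\chi_{S_2}$ is an eigenvector of $\Gamma$ for the eigenvalue $s-1-e=1$, which explains these exact balancings.)

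Repeating this type of local count for the edges $t_it_j$, $t_ia_\ell$, $t_ib_\ell$, and so on, pins down all adjacencies between $R$ and $S_1\cup S_2$ up to relabelling: for instance each $R_1$-vertex is joined to exactly one vertex of $T$ and to all four of $a_1,a_2,b_1,b_2$; each block $R_2^{(ij)}:=\{r\in R_2:\Gamma(r)\cap T=\{t_i,t_j\}\}$ has size $4$; each $R_1^{(i)}:=\{r\in R_1:r\sim t_i\}$ has size $2$; and one likewise fixes how the $R_1$- and $R_2$-vertices meet $\{a_1,a_2\}$ and $\{b_1,b_2\}$. At this stage every edge of $\Gamma$ outside $R$ is determined up to isomorphism, and the induced degrees inside $R$ are forced as well (an $R_1$-vertex has $11$ neighbours in $R$, an $R_2$-vertex $12$). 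Completing $\Gamma$ thus reduces to choosing the edges of $\Gamma[R]$ subject to the edge-regularity condition $\lambda=9$ and checking co-edge-regularity --- a finite problem, most conveniently handled on the complement $\overline{\Gamma}$ (which is only $8$-regular with $\mu'=2$, and where just $48$ edges remain to be placed inside $R$). I would run this by computer, enumerating all completions and verifying that each one is strongly regular; one expects the only outcome to be the unique strongly regular graph with parameters $(25,16,9,12)$, namely $\overline{L_2(5)}$ (in which two $5$-cliques correspond to two transversals of a $5\times5$ array, which may meet in exactly three cells, so the hypothesis of the lemma is genuinely satisfiable).

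The hard part is this last step: although the local structure around $S_1\cup S_2$ is almost completely rigid, there are still many a priori ways to place the remaining edges inside $R$, and the computer-aided content of the proof is to check that the common-neighbour conditions leave no completion that fails to be strongly regular --- equivalently, no strictly Neumaier completion. A secondary but necessary point is the bookkeeping behind the repeated ``up to relabelling'': one must confirm that each choice made in the reduction is a genuine automorphism of the partial graph, so that the search need only treat a single representative configuration.
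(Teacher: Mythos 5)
Your plan is sound and its deterministic counting steps are all correct, but it takes a genuinely different route from the paper. The paper localises at a vertex $u\in S_1\cap S_2$: inside $\Gamma(u)$ the two cliques become $4$-cliques that are $2$-regular in $\Gamma(u)$ and meet in two vertices, and a count of common neighbours split between $\Gamma(u)$ and $\Gamma_2(u)$ (which has only $8$ vertices) rigidifies the picture down to exactly four non-isomorphic partial neighbourhoods; these are then completed by a recursive orbit-pruned brute force in Sage, which finds only the strongly regular graph. You instead organise everything around the $7$-vertex union $S_1\cup S_2=T\cup\{a_1,a_2\}\cup\{b_1,b_2\}$ and stratify the remaining $18$ vertices by $|\Gamma(r)\cap T|$; your conclusions ($a_i\not\sim b_j$, $|R_1|=6$, $R_3=\emptyset$, $|R_2^{(ij)}|=4$, $|R_1^{(i)}|=2$, the degree counts into $R$) all check out and are equivalent to the paper's structural information viewed globally rather than from $u$. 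What your route buys is a cleaner global bookkeeping and the nice eigenvector explanation via $\chi_{S_1}-\chi_{S_2}$; what the paper's route buys is a much smaller ``unknown'' region ($\Gamma_2(u)$ has $8$ vertices versus your $\Gamma[R]$ on $18$), which matters for the feasibility of the final search. One caveat: your claim that ``every edge outside $R$ is determined up to isomorphism'' is too strong. Edge-regularity gives you the marginals (each $R_2^{(ij)}$ meets $a_1$ and $a_2$ in two vertices each, and likewise $b_1,b_2$), but since $a_i\not\sim b_j$ and the graph is not assumed co-edge-regular, nothing forces the joint distribution of labels in $\{a_1,a_2\}\times\{b_1,b_2\}$ on each block $R_2^{(ij)}$, nor the alignment across blocks; you will get several non-isomorphic attachment schemes (the analogue of the paper's four partial neighbourhoods), not one. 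This is only a bookkeeping issue, which you flag yourself, and both proofs ultimately rest on an exhaustive computer enumeration, so your proposal is at the same level of rigour as the paper's argument modulo actually running that search.
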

	\begin{proof}
		Let~$u\in S_1\cap S_2$. By assumption,~$\Gamma(u)$ contains two 4-cliques,~$S'_1$ and~$S'_2$, which are 2-regular within~$\Gamma(u)$ and intersect in two vertices. These two vertices both have nine neighbors in $\Gamma(u)$ and hence six neighbors in $\Gamma_2(u)$. It follows that they have at least four common neighbors in~$\Gamma_2(u)$; moreover they have at least five common neighbors in~$\Gamma(u)\cup \{u\}$, namely $\{u\}\cup(S_{1}\triangle S_{2})$. Since they have nine common neighbors in total, they must have exactly five in~$\Gamma(u)\cup \{u\}$ and four in~$\Gamma_2(u)$. This leaves two vertices of~$\Gamma(u)$ which are adjacent to neither, but still have two neighbors in each clique. Therefore, they are both adjacent to all of~$S'_1\triangle S'_2$. Figure~\ref{fig:neu:ng2516case3_nbhood} shows the resulting partial neighborhood of~$u$. We will adapt the vertex labeling in the figure for the remainder of the proof, hence we have~$S'_{1}=\{x_{5},x_{6},x_{10},x_{11}\}$ and~$S'_{2}=\{x_{6},x_{7},x_{11},x_{12}\}$.
		
		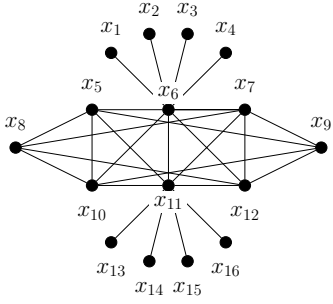
\begin{figure}[!htb]\centering
			\begin{adjustbox}{width=0.3\textwidth}
				\begin{tikzpicture}[nodes={draw,circle,fill,inner sep=3pt}]
					\node[label=above:{\Large$x_1$}] (x1) at (-1.5,1.5){};
					\node[label=above:{\Large$x_2$}] (x2) at (-0.5,2){};
					\node[label=above:{\Large$x_3$}] (x3) at (0.5,2){};
					\node[label=above:{\Large$x_4$}] (x4) at (1.5,1.5){};
					\node[label=above:{\Large$x_5$}] (x5) at (-2,0){};
					\node[label={[fill=white,rectangle]above:{\Large$x_6$}}] (x6) at (0,0){};
					\node[label=above:{\Large$x_7$}] (x7) at (2,0){};
					\node[label=above:{\Large$x_8$}] (x8) at (-4,-1){};
					\node[label=above:{\Large$x_9$}] (x9) at (4,-1){};
					\node[label=below:{\Large$x_{10}$}] (x10) at (-2,-2){};
					\node[label={[fill=white,rectangle]below:{\Large$x_{11}$}}] (x11) at (0,-2){};
					\node[label=below:{\Large$x_{12}$}] (x12) at (2,-2){};
					\node[label=below:{\Large$x_{13}$}] (x13) at (-1.5,-3.5){};
					\node[label=below:{\Large$x_{14}$}] (x14) at (-0.5,-4){};
					\node[label=below:{\Large$x_{15}$}] (x15) at (0.5,-4){};
					\node[label=below:{\Large$x_{16}$}] (x16) at (1.5,-3.5){};
					
					\begin{pgfonlayer}{bg}    
						\draw (x5) -- (x10) -- (x6) -- (x11) -- (x5) -- (x6) -- (x7) -- (x11) -- (x12) -- (x6) -- (x7) -- (x12);
						\draw (x10) -- (x11);
						\draw (x5) -- (x8) -- (x10) -- (x9) -- (x5);
						\draw (x7) -- (x8) -- (x12) -- (x9) -- (x7);
						\draw (x1) -- (x6) -- (x2);
						\draw (x3) -- (x6) -- (x4);
						\draw (x13) -- (x11) -- (x14);
						\draw (x15) -- (x11) -- (x16);
					\end{pgfonlayer}
				\end{tikzpicture}
			\end{adjustbox}
			\caption{Required edges in the neighborhood of vertex $u$.}
			\label{fig:neu:ng2516case3_nbhood}
		\end{figure}
		
		Vertices~$x_7$ and~$x_{12}$ both must be adjacent to two distinct vertices in~$\{x_1,\dots,x_4\}$ and to two distinct vertices in~$\{x_{13},\dots,x_{16}\}$, else~$S'_{2}$ would not be 2-regular or they would have at least five common neighbors with~$x_{6}$ or $x_{11}$. Without loss of generality, we may assume~$x_7\sim x_3,x_4,x_{13},x_{14}$ and~$x_{12}\sim x_1,x_2,x_{15},x_{16}$. The same holds for~$x_5$ and~$x_{10}$. This leads to the four non-isomorphic graphs shown in Figure~\ref{fig:neu:ng2516case3_nbhoods}.
		
		\begin{figure}[htb!]\centering
			\begin{subfigure}[b]{0.22\textwidth}
				\centering
				\begin{adjustbox}{width=0.95\textwidth}
					\begin{tikzpicture}[nodes={draw,circle,fill,inner sep=3pt}]
						\node (x1) at (-1.5,1.5){};
						\node (x2) at (-0.5,2){};
						\node (x3) at (0.5,2){};
						\node (x4) at (1.5,1.5){};
						\node (x5) at (-2,0){};
						\node (x6) at (0,0){};
						\node (x7) at (2,0){};
						\node (x8) at (-4,-1){};
						\node (x9) at (4,-1){};
						\node (x10) at (-2,-2){};
						\node (x11) at (0,-2){};
						\node (x12) at (2,-2){};
						\node (x13) at (-1.5,-3.5){};
						\node (x14) at (-0.5,-4){};
						\node (x15) at (0.5,-4){};
						\node (x16) at (1.5,-3.5){};
						
						\begin{pgfonlayer}{bg}    
							\draw (x5) -- (x10) -- (x6) -- (x11) -- (x5) -- (x6) -- (x7) -- (x11) -- (x12) -- (x6) -- (x7) -- (x12);
							\draw (x10) -- (x11);
							\draw (x5) -- (x8) -- (x10) -- (x9) -- (x5);
							\draw (x7) -- (x8) -- (x12) -- (x9) -- (x7);
							\draw (x1) -- (x6) -- (x2);
							\draw (x3) -- (x6) -- (x4);
							\draw (x13) -- (x11) -- (x14);
							\draw (x15) -- (x11) -- (x16);
							
							\draw (x4) -- (x7) -- (x3);
							\draw (x7) -- (x14);
							\draw (x7) to [out=-125,in=35, looseness=1] (x13);
							\draw (x12) to [out=125,in=-35, looseness=1] (x1);
							\draw (x12) -- (x2);
							\draw (x15) -- (x12) -- (x16);
							
							\draw (x1) -- (x5) -- (x2);
							\draw (x13) -- (x5) -- (x14);
							\draw (x3) -- (x10) to [out=55,in=-145, looseness=1] (x4);
							\draw (x15) -- (x10) -- (x16);
						\end{pgfonlayer}
					\end{tikzpicture}
				\end{adjustbox}
				\caption{}
				\label{fig:neu:ng2516nbhoods_case3_sub1}
			\end{subfigure}
			\begin{subfigure}[b]{0.22\textwidth}
				\centering
				\begin{adjustbox}{width=0.95\textwidth}
					\begin{tikzpicture}[nodes={draw,circle,fill,inner sep=3pt}]
						\node (x1) at (-1.5,1.5){};
						\node (x2) at (-0.5,2){};
						\node (x3) at (0.5,2){};
						\node (x4) at (1.5,1.5){};
						\node (x5) at (-2,0){};
						\node (x6) at (0,0){};
						\node (x7) at (2,0){};
						\node (x8) at (-4,-1){};
						\node (x9) at (4,-1){};
						\node (x10) at (-2,-2){};
						\node (x11) at (0,-2){};
						\node (x12) at (2,-2){};
						\node (x13) at (-1.5,-3.5){};
						\node (x14) at (-0.5,-4){};
						\node (x15) at (0.5,-4){};
						\node (x16) at (1.5,-3.5){};
						
						\begin{pgfonlayer}{bg}    
							\draw (x5) -- (x10) -- (x6) -- (x11) -- (x5) -- (x6) -- (x7) -- (x11) -- (x12) -- (x6) -- (x7) -- (x12);
							\draw (x10) -- (x11);
							\draw (x5) -- (x8) -- (x10) -- (x9) -- (x5);
							\draw (x7) -- (x8) -- (x12) -- (x9) -- (x7);
							\draw (x1) -- (x6) -- (x2);
							\draw (x3) -- (x6) -- (x4);
							\draw (x13) -- (x11) -- (x14);
							\draw (x15) -- (x11) -- (x16);
							
							\draw (x4) -- (x7) -- (x3);
							\draw (x7) -- (x14);
							\draw (x7) to [out=-125,in=35, looseness=1] (x13);
							\draw (x12) to [out=125,in=-35, looseness=1] (x1);
							\draw (x12) -- (x2);
							\draw (x15) -- (x12) -- (x16);
							
							\draw (x1) -- (x5) -- (x2);
							\draw (x13) -- (x5) -- (x15);
							\draw (x3) -- (x10) to [out=55,in=-145, looseness=1] (x4);
							\draw (x14) -- (x10) -- (x16);
						\end{pgfonlayer}
					\end{tikzpicture}
				\end{adjustbox}
				\caption{}
				\label{fig:neu:ng2516nbhoods_case3_sub2}
			\end{subfigure}
			\begin{subfigure}[b]{0.22\textwidth}
				\centering
				\begin{adjustbox}{width=0.95\textwidth}
					\begin{tikzpicture}[nodes={draw,circle,fill,inner sep=3pt}]
						\node (x1) at (-1.5,1.5){};
						\node (x2) at (-0.5,2){};
						\node (x3) at (0.5,2){};
						\node (x4) at (1.5,1.5){};
						\node (x5) at (-2,0){};
						\node (x6) at (0,0){};
						\node (x7) at (2,0){};
						\node (x8) at (-4,-1){};
						\node (x9) at (4,-1){};
						\node (x10) at (-2,-2){};
						\node (x11) at (0,-2){};
						\node (x12) at (2,-2){};
						\node (x13) at (-1.5,-3.5){};
						\node (x14) at (-0.5,-4){};
						\node (x15) at (0.5,-4){};
						\node (x16) at (1.5,-3.5){};
						
						\begin{pgfonlayer}{bg}    
							\draw (x5) -- (x10) -- (x6) -- (x11) -- (x5) -- (x6) -- (x7) -- (x11) -- (x12) -- (x6) -- (x7) -- (x12);
							\draw (x10) -- (x11);
							\draw (x5) -- (x8) -- (x10) -- (x9) -- (x5);
							\draw (x7) -- (x8) -- (x12) -- (x9) -- (x7);
							\draw (x1) -- (x6) -- (x2);
							\draw (x3) -- (x6) -- (x4);
							\draw (x13) -- (x11) -- (x14);
							\draw (x15) -- (x11) -- (x16);
							
							\draw (x4) -- (x7) -- (x3);
							\draw (x7) -- (x14);
							\draw (x7) to [out=-125,in=35, looseness=1] (x13);
							\draw (x12) to [out=125,in=-35, looseness=1] (x1);
							\draw (x12) -- (x2);
							\draw (x15) -- (x12) -- (x16);
							
							\draw (x1) -- (x5) -- (x3);
							\draw (x13) -- (x5) -- (x15);
							\draw (x2) -- (x10) to [out=55,in=-145, looseness=1] (x4);
							\draw (x14) -- (x10) -- (x16);
						\end{pgfonlayer}
					\end{tikzpicture}
				\end{adjustbox}
				\caption{}
				\label{fig:neu:ng2516nbhoods_case3_sub3}
			\end{subfigure}
			\begin{subfigure}[b]{0.22\textwidth}
				\centering
				\begin{adjustbox}{width=0.95\textwidth}
					\begin{tikzpicture}[nodes={draw,circle,fill,inner sep=3pt}]
						\node (x1) at (-1.5,1.5){};
						\node (x2) at (-0.5,2){};
						\node (x3) at (0.5,2){};
						\node (x4) at (1.5,1.5){};
						\node (x5) at (-2,0){};
						\node (x6) at (0,0){};
						\node (x7) at (2,0){};
						\node (x8) at (-4,-1){};
						\node (x9) at (4,-1){};
						\node (x10) at (-2,-2){};
						\node (x11) at (0,-2){};
						\node (x12) at (2,-2){};
						\node (x13) at (-1.5,-3.5){};
						\node (x14) at (-0.5,-4){};
						\node (x15) at (0.5,-4){};
						\node (x16) at (1.5,-3.5){};
						
						\begin{pgfonlayer}{bg}    
							\draw (x5) -- (x10) -- (x6) -- (x11) -- (x5) -- (x6) -- (x7) -- (x11) -- (x12) -- (x6) -- (x7) -- (x12);
							\draw (x10) -- (x11);
							\draw (x5) -- (x8) -- (x10) -- (x9) -- (x5);
							\draw (x7) -- (x8) -- (x12) -- (x9) -- (x7);
							\draw (x1) -- (x6) -- (x2);
							\draw (x3) -- (x6) -- (x4);
							\draw (x13) -- (x11) -- (x14);
							\draw (x15) -- (x11) -- (x16);
							
							\draw (x4) -- (x7) -- (x3);
							\draw (x7) -- (x14);
							\draw (x7) to [out=-125,in=35, looseness=1] (x13);
							\draw (x12) to [out=125,in=-35, looseness=1] (x1);
							\draw (x12) -- (x2);
							\draw (x15) -- (x12) -- (x16);
							
							\draw (x1) -- (x5) -- (x3);
							\draw (x13) -- (x5) -- (x14);
							\draw (x2) -- (x10) to [out=55,in=-145, looseness=1] (x4);
							\draw (x15) -- (x10) -- (x16);
						\end{pgfonlayer}
					\end{tikzpicture}
				\end{adjustbox}
				\caption{}
				\label{fig:neu:ng2516nbhoods_case3_sub4}
			\end{subfigure}
			
			\caption{The neighborhood of $u$ must have one of these four non-isomorphic graphs as a subgraph.} 
			\label{fig:neu:ng2516case3_nbhoods}
		\end{figure}
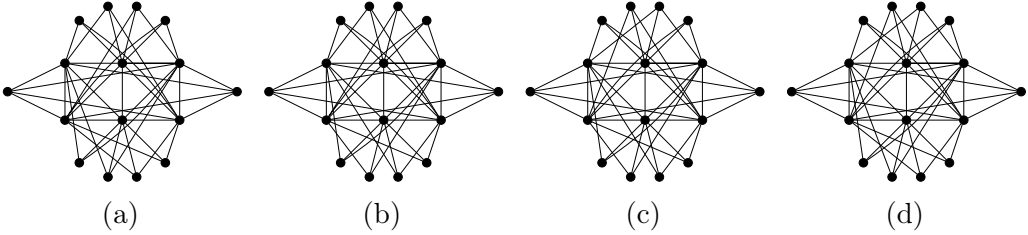
		
		To complete each partial neighborhood, we apply a recursive brute-force procedure in Sage. For a given vertex~$x_i$ with degree less than nine, we compute the set~$S$ of potential neighbors. Vertices in~$S$ must have degree at most eight and have at most four common neighbors with~$x_i$ so far. If~$9-d_{x_i} > |S|$, no valid set of neighbors can be found for~$x_i$ and the algorithm returns to the previous iteration. Otherwise, it computes the orbits of all~$(9-d_{x_i})$-subsets of~$S$ under the automorphism group of the partial neighborhood. We choose one representative of each orbit, add the corresponding edges to a copy of the graph and recurse on each of these new partial neighborhoods.  
		
		After completing the potential neighborhoods of~$u$, we apply the same idea to generate all possible edges between~$\Gamma(u)$ and~$\Gamma_2(u)$. For each vertex~$w\in \Gamma(u)$, we generate all possible sets of neighbors of~$w$ in~$\Gamma_2(u)$ and recurse on one representative of each orbit under the automorphism group of the partial graph we have constructed so far, pruning whenever edge-regularity is violated. We then apply a similar procedure to complete~$\Gamma_2(w)$. The results are summarized in Table~\ref{tab:neu:ng2516case3}, which shows how many options remain after applying each algorithm to the four graphs in Figure~\ref{fig:neu:ng2516case3_nbhoods}. We see that only one Neumaier graph is found, which is isomorphic to the graph generated by the unique pair (up to isotopy) of mutually orthogonal~$5\times 5$ Latin squares. This graph is known to be strongly regular.
	\end{proof}
	
	\begin{table}[!htb]\centering
		\begin{tabular}{l|c|c|c}
			Graph & Options~$\Gamma(u)$ & 
			Options~$E(\Gamma(u),\Gamma_2(u))$ & Options~$\Gamma_2(u)$ \\\hline
			\ref{fig:neu:ng2516nbhoods_case3_sub1}&38938&0&-\\
			\ref{fig:neu:ng2516nbhoods_case3_sub2}&34813&0&-\\
			\ref{fig:neu:ng2516nbhoods_case3_sub3}&241467&108&0\\
			\ref{fig:neu:ng2516nbhoods_case3_sub4}&239500&3&1
		\end{tabular}
		\caption{The number of options left after applying each algorithm to the graphs in Figure \ref{fig:neu:ng2516case3_nbhoods}.
    }
		\label{tab:neu:ng2516case3}
	\end{table}

	The complement of~$\G$ has parameters $(v,k',\mu';e',s) = (25,8,2;2,5)$ and is not 3-edge-regular. Let~$C$ be the~$e'$-regular coclique in this graph. Each of the ten pairs of vertices in~$C$ has two common neighbors in~$V\setminus C$. As~$|V\setminus C| = 20$ and every~$w\in V\setminus C$ has exactly two neighbors in~$C$, this means that every pair of coclique vertices has two distinct common neighbors, and these sets of two vertices partition $V\setminus C$. This fixes the edges into~$C$ up to symmetry. Moreover, two common neighbors corresponding to the same pair must be adjacent. If not, we could replace the pair with these neighbors and find a coclique intersecting~$C$ in three vertices. Then~$\overline{\Gamma}$ has two intersecting maximum cliques, which contradicts Lemma~\ref{lem:neu:2512_int3}. Combined, these observations fix fifty of the hundred edges in~$\overline{\Gamma}$. This creates four orbits of (potentially) adjacent pairs of vertices. We run the ILP twice for each pair, once with constraint  \ref{fig:neu:ILP}(\ref{con:neu:er2}) and once with constraint \ref{fig:neu:ILP}(\ref{con:neu:er3}). In all cases, the ILP does not have a solution, hence there is no strictly Neumaier graph with parameters~$(25,16,9;3,5)$.
	
	\begin{case}{2}$(28,18,11;4,7)$.
	\end{case}
    \noindent The complement of a strictly Neumaier graph with these parameters would have~$k'= 9$,~$\mu'= 3$ and~$e'= 3$. The 21 vertices outside the regular coclique~$C$ induce a 2-(7,3,3) design (possibly with repeated blocks) on the vertices in~$C$, where every block corresponds to the set of neighbors of a vertex~$v\in V\setminus C$. There exist ten 2-(7,3,3) designs (with repeated blocks allowed) \cite[Table II.1.20]{colbourn2007crc}. This gives us ten sets of fixed edges as input for the ILP. Moreover, we may discard constraints \ref{fig:neu:ILP}(\ref{con:neu:er1})-(\ref{con:neu:er3}), since there exist no strongly regular graphs on these parameters.
    In all cases, the ILP has no solution. 

    \begin{case}{3}$(33,24,17;6,9)$.
	\end{case}
    \noindent The complement of a strictly Neumaier graph with these parameters has~$k'= 8$,~$\mu'= 2$ and~$e'= 3$. Once again, there exist no strongly regular graphs on the given parameters, so we discard constraints \ref{fig:neu:ILP}(\ref{con:neu:er1})-(\ref{con:neu:er3}) in the ILP. In the complement graph, the 24 vertices outside the regular coclique~$C$ induce a 2-(9,3,2) design (possibly with repeated blocks) on the vertices in~$C$, where every block corresponds to the set of neighbors of a vertex~$v\in V\setminus C$. There exist 36 such designs up to isomorphism~\cite[Table II.1.23]{colbourn2007crc}, and each of these gives rise to a set of fixed edges as input for the ILP. In all 36 cases, the ILP does not have a solution.
		
	\begin{case}{4}$(35,22,12;3,5)$ and~$(55,34,18;3,5)$.
	\end{case}
 \noindent Suppose there exists a strictly Neumaier graph with parameters~$(35,22,12;3,5)$. Then the complement graph is 12-regular, 3-co-edge-regular and has a 2-regular coclique of size 5. Since there exists no 12-regular strongly regular graphs on 35 vertices, we do not need to take into account the edge-regularity constraints in the ILP. Moreover, the 10 pairs of nonadjacent vertices in the coclique each have three common neighbors. Since the coclique is 2-regular, these ten sets of common neighbors partition the vertices outside the coclique. This means that, up to symmetry, there is only one possible configuration of edges between the coclique and the rest of the complement graph. There exist no strongly regular graphs on the given parameters, so we run the ILP without constraints \ref{fig:neu:ILP}(\ref{con:neu:er1})-(\ref{con:neu:er3}). It has no solutions, hence there exist no strictly Neumaier graph with parameters~$(35,22,12;3,5)$.
 
 The same approach can be used to eliminate $(55,34,18;3,5)$, where the complement would be a 24-regular, 5-co-edge-regular graph. Again, there are ten pairs of coclique vertices and the sets of their common neighbors partition the vertices outside the coclique. This fixes the edges between the coclique and the rest of the complement graph up to isomorphism.
 
 Both parameter sets fit the pattern~$(10\mu+5,6\mu+4,3\mu+3;3,5)$ with~$\mu=3$ and~$\mu=5$ respectively. Moreover, the ILP solver terminates very quickly in both cases, despite the large number of vertices. This suggests that there may be an underlying theoretical reason why these graphs cannot exist for odd values of~$\mu$.

  \begin{case}{5}$(40,30,22;7,10)$.
	\end{case}
    \noindent The complement of a strictly Neumaier graph with these parameters has~$k'= 9$,~$\mu'= 2$ and~$e'= 3$. There again exist no strongly regular graphs on the given parameters. In the complement graph, the vertices outside the regular coclique~$C$ induce a 2-(10,3,2) design (possibly with repeated blocks) on the vertices in~$C$, where every block corresponds to the set of neighbors of a vertex~$v\in V\setminus C$. There exist 960 such designs up to isomorphism~\cite{colbourn1983complete,ganter1978complete}. Each of these gives rise to a set of fixed edges as input for the ILP, none of which produce a Neumaier graph. On some inputs, the ILP terminates within several minutes, however, there are also some extreme cases which require several days. It therefore seems that we are reaching the boundary of what is possible with this method. \newline

The results from this section are summarized in the following theorem.

\begin{theorem}
    There exist no strictly Neumaier graphs with parameters~$(25,16,9;\allowbreak 3,5)$, $(28,18,11;4,7)$, $(33,24,17;6,9)$, $(35,22,12;3,5)$, $(40,30,22;7,10)$ and~$(55,34, \allowbreak 18;\allowbreak 3,5)$. 
    \label{th:neu:nonexistence_overview}
\end{theorem}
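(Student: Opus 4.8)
The plan is to dispatch the six parameter sets one at a time, all by the same two-stage recipe: first translate ``there is a strictly Neumaier graph with parameters $(v,k,\lambda;e,s)$'' into an integer linear feasibility problem on the \emph{complement} graph, then reduce that problem by parameter-specific structural observations before handing it to an ILP solver.

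For the translation I use the dictionary of Section~\ref{sec:ip}: if $\overline{\G}$ is strictly Neumaier with parameters $(v,k,\lambda;e,s)$, then $\G$ is $k'$-regular, $\mu'$-co-edge-regular and contains an $e'$-regular coclique of size $s$, with $k'=v-k-1$, $\mu'=v-2k+\lambda$ and $e'=s-e$, and moreover $\G$ is not $\lambda'$-edge-regular for the unique admissible value $\lambda'=v-2-2k+k(k-\lambda-1)/(v-k-1)$. These conditions are exactly the constraints of the program in Figure~\ref{fig:neu:ILP}: binary variables $x_{uw}$ for the edges, auxiliary variables $y_{tuw}$ forced to equal $x_{tu}x_{tw}$, the degree equation~\ref{fig:neu:ILP}(\ref{con:neu:degree}), the co-edge-regularity inequalities~\ref{fig:neu:ILP}(\ref{con:neu:cer1})--(\ref{con:neu:cer2}), the coclique constraints~\ref{fig:neu:ILP}(\ref{con:neu:coclique1})--(\ref{con:neu:coclique2}), and, to forbid $\lambda'$-edge-regularity, a branch over a small set of orbit representatives of adjacent pairs $\{p,q\}$, each run once with $\sum_{t} y_{tpq}\ge\lambda'+1$ and once with $\sum_{t} y_{tpq}\le\lambda'-1$. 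Whenever no strongly regular graph exists on the complement parameters --- which, as recorded in \cite{colbourn2007crc} and in standard parameter lists, holds for all but the first set here --- I drop the edge-regularity branch entirely, since then every graph with those regularity and coclique parameters is already strictly Neumaier.

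Next I shrink the search space set by set, always by analysing the neighbourhoods of the vertices \emph{outside} the regular coclique $C$. Each such vertex has exactly $e'$ neighbours in $C$, so these neighbourhoods form a $2$-design (repeated blocks allowed) on $C$: a $2$-$(7,3,3)$, a $2$-$(9,3,2)$ and a $2$-$(10,3,2)$ design for $(28,18,11;4,7)$, $(33,24,17;6,9)$ and $(40,30,22;7,10)$ respectively, and I loop over the $10$, $36$ and $960$ such designs from \cite{colbourn2007crc,colbourn1983complete,ganter1978complete} as fixed-edge inputs to the ILP. For the two sets of the form $(10\mu+5,6\mu+4,3\mu+3;3,5)$, namely $(35,22,12;3,5)$ and $(55,34,18;3,5)$, the coclique is $2$-regular of size $5$, so the $\binom{5}{2}=10$ common-neighbour sets of coclique pairs each have size $\mu'$ and partition $V\setminus C$; this fixes the edges between $C$ and its complement up to symmetry. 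For $(25,16,9;3,5)$ I first prove the auxiliary Lemma~\ref{lem:neu:2512_int3}, that a Neumaier graph on these parameters with two $5$-cliques meeting in three points is strongly regular, by a hands-on analysis of a vertex neighbourhood (reducing to the four partial neighbourhoods of Figure~\ref{fig:neu:ng2516case3_nbhoods}) followed by a bounded recursive completion in a computer algebra system; in the complement this forces the two common neighbours of each coclique pair to be adjacent, pinning down fifty of the hundred edges and leaving only four orbits of possibly-adjacent pairs to branch over.

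The final step is to run the resulting ILP instances with the Gurobi solver \cite{gurobi} and observe that every one of them is infeasible, which gives nonexistence for all six parameter sets. The main obstacle I anticipate is computational scale rather than mathematical subtlety: for $(40,30,22;7,10)$ there are $960$ designs to process and the hardest instances take the solver several days, so the real work lies in the preprocessing --- the complement reduction, the $2$-design enumeration, the coclique-partition argument and the clique-intersection lemma --- that keeps each instance small enough to be resolved at all.
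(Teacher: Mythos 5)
Your proposal follows the paper's proof essentially verbatim: the same complement-graph ILP of Figure~\ref{fig:neu:ILP}, the same branching over orbit representatives to forbid $\lambda'$-edge-regularity (dropped when no strongly regular graph exists on the complement parameters), the same $2$-design enumerations for $(28,18,11;4,7)$, $(33,24,17;6,9)$ and $(40,30,22;7,10)$, the same coclique-pair partition argument for the two $(10\mu+5,6\mu+4,3\mu+3;3,5)$ sets, and the same use of Lemma~\ref{lem:neu:2512_int3} to fix fifty edges and reduce to four orbits for $(25,16,9;3,5)$. The argument is correct and matches the paper's proof, including the anticipated computational bottleneck at $(40,30,22;7,10)$.
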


	\subsection*{Acknowledgements}
	The authors thank Jack Koolen for inspiring discussions on Section \ref{sec:neu:nonexistence}. The authors would like to thank the referee for the many detailed comments.
    \par Aida Abiad is partially supported by the Dutch Research Council through the grant VI.Vidi.213.085 and by the Research Foundation Flanders through the grant 1285921N. Maarten De Boeck and Sjanne Zeijlemaker were partially supported by the Croatian Science Foundation under the project 5713.

	\subsection*{Statements and Declarations}
	The authors have no relevant financial or non-financial interests to disclose.
 
	\bibliographystyle{abbrv}
	\bibliography{bibliography_rev}{}

\end{document}